\documentclass{article}
\usepackage{amsmath,amsfonts,amsthm,amssymb,amscd,color,xcolor,mathrsfs,enumitem}
\usepackage{hyperref}

\usepackage{soul}

\colorlet{darkblue}{blue!50!black}

\hypersetup{
    colorlinks,%
    citecolor=darkblue,%
    filecolor=red,%
    linkcolor=darkblue,%
    urlcolor=magenta,%
    pdfnewwindow=true,%
    pdfstartview={FitH}
    
}

\binoppenalty=9999 \relpenalty=9999

\renewcommand{\Re}{\mathop{\rm Re}\nolimits}

\newcommand{\p}{\partial}
\newcommand{\e}{\varepsilon}

\newcommand{\C}{{\mathbb C}}

\newcommand{\R}{{\mathbb R}}
\newcommand{\Z}{{\mathbb Z}}
\newcommand{\IP}{{\mathbb P}}
\newcommand{\pP}{{\mathbb P}}

\newcommand{\E}{{\mathbb E}}

\newcommand{\N}{{\mathbb N}}

\newcommand{\om}{\omega}
\newcommand{\eps}{\varepsilon}

\newcommand{\MMM}{\mathfrak{M}}

\newcommand{\XXXX}{{\mathfrak X}}

\newcommand{\bKK}{{\boldsymbol{\KK}}}

\newcommand{\eeta}{{\boldsymbol{{\eta}}}}
\newcommand{\mmu}{{\boldsymbol{{\mu}}}}
\newcommand{\zzeta}{{\boldsymbol{{\zeta}}}}

\newcommand{\GGamma}{{\boldsymbol\Gamma}}
\newcommand{\oomega}{{\boldsymbol\omega}}
\newcommand{\ssigma}{{\boldsymbol\sigma}}
\newcommand{\xxi}{{\boldsymbol\xi}}

\newcommand{\BB}{{\cal B}}

\newcommand{\DD}{{\cal D}}
\newcommand{\EE}{{\cal E}}
\newcommand{\FF}{{\cal F}}

\newcommand{\HH}{{\cal H}}
\newcommand{\KK}{{\cal K}}
\newcommand{\LL}{{\cal L}}

\newcommand{\OO}{{\cal O}}
\newcommand{\PPP}{{\mathscr P}}
\newcommand{\PP}{{\mathcal P}}

\newcommand{\RR}{{\cal R}}

\newcommand{\dd}{{\textup d}}

\newcommand{\PPPP}{{\mathfrak P}}

\newcommand{\HHHH}{{\mathfrak H}}

\newcommand{\SSS}{{\mathscr S}}

\newcommand{\nnn}{{\boldsymbol{\mathit n}}}

\newcommand{\yyy}{{\boldsymbol{\mathit y}}}

\newcommand{\be}{\begin{equation}}
\newcommand{\ee}{\end{equation}}

\newcommand{\Id}[1]{\mathop{\rm Id}_{#1}\nolimits} 
\newcommand{\lspan}{\mathop{\rm span}\nolimits}

\newcommand{\supp}{\mathop{\rm supp}\nolimits}
\newcommand{\diver}{\mathop{\rm div}\nolimits}
\newcommand{\Lip}{\mathop{\rm Lip}\nolimits}

 \newcommand{\strela}{\rightharpoonup}

\theoremstyle{plain}
\newtheorem*{mt}{Main Theorem}

\newtheorem{theorem}{Theorem}[section]

\newtheorem{lemma}[theorem]{Lemma}
\newtheorem{proposition}[theorem]{Proposition}
\newtheorem{corollary}[theorem]{Corollary}
\theoremstyle{definition}
\newtheorem{definition}[theorem]{Definition}

\theoremstyle{remark}
\newtheorem{remark}[theorem]{Remark}
\newtheorem{example}[theorem]{Example}

\numberwithin{equation}{section}

\begin{document}
\title{Mixing for dynamical systems driven by stationary noises}
\author{Sergei Kuksin\footnote{Universit\'e Paris Cit\'e and Sorbonne Universit\'e, CNRS, IMJ-PRG, 75013 Paris, France \& Peoples Friendship University of Russia  \& Steklov Mathematical Institute of Russian Academy of Sciences \& National Research University Higher School of Economics, 
 Moscow, Russia; Email: \href{mailto:Sergei.Kuksin@imj-prg.fr}{Sergei.Kuksin@imj-prg.fr}} \and Armen Shirikyan\footnote{Department of Mathematics, CY Cergy Paris University, CNRS UMR 8088, 2 avenue Adolphe Chauvin, 95302 Cergy--Pontoise, France; Email: \href{mailto:Armen.Shirikyan@cyu.fr}{Armen.Shirikyan@cyu.fr}}}
\date{\today}
\maketitle

\begin{abstract}
The paper deals with the problem of long-time asymptotic behaviour of solutions for  classes of ODEs and PDEs, perturbed by  stationary noises. The latter are not assumed to be $\delta$-correlated in time, therefore the evolution in question is not necessarily Markovian. We first prove an abstract result which implies the mixing for random dynamical systems satisfying appropriate dissipativity and controllability conditions. It is applicable to a large class of evolution equations, and we illustrate this  on the examples of a chain of anharmonic oscillators coupled to heat reservoirs, the 2d Navier--Stokes system, and a complex Ginzburg--Landau equation. Our results also apply to the general theory of random processes on the 1d lattice and allow one to get for them results related to Dobrushin's theorems on reconstructing processes via their conditional distributions. The proof is based on an iterative construction with Newton's quadratic approximation. It uses the method of Kantorovich functional, introduced  earlier by the authors in the context of randomly forced PDEs, and some ideas used by them in the Markovian case to prove mixing with the help of controllability properties of an associated system. 

\smallskip
\noindent
{\bf AMS subject classifications:} 35Q30, 35Q56, 37H30, 37L40, 60G10, 60H25, 60J05, 76D06, 76F25

\smallskip
\noindent
{\bf Keywords:} Navier--Stokes system, complex Ginzburg--Landau equation, chain of oscillators, stationary noise, exponential mixing
\end{abstract}

\newpage
\tableofcontents

\section{Introduction}
\label{s0} 

In a Hilbert space~$H$ of finite or infinite dimension, we consider smooth discrete-time and continuous-time random dynamical systems, stirred by bounded stationary stochastic processes. Our goal is to study the long-time asymptotic behaviour of their trajectories. That is, we examine random dynamical systems (RDS) of the form 
\begin{align}
u_k &= S(u_{k-1}, \eta_k),& \quad &k\ge1;& \qquad u_0&=v;& \qquad u_k &\in H;
\label{01}\\[6pt]
\dot u(t) &= F(u(t), \eta(t)),& \quad &t\ge0;& \qquad  u(0)&=v;
&\qquad u(t) &\in H.
\label{02}
\end{align}
Here $\{\eta_k^\om, k\in\Z\}$ and $\{\eta^\om(t), t\in\R\}$ are bounded stationary processes in a Banach space~$E$, and the mappings $S: H\times E \to H$ and $F: D\times E \to H$ are $C^2$-smooth (in the sense of Fr\'echet), where~$D\subset H$ is a suitable dense subspace. We begin with reducing Equation~\eqref{02} to a discrete-time system of the form~\eqref{01}. To do it, we generalise the setting a bit and assume that the random input~$\eta^\om(t)$ entering~\eqref{02} is a bounded process in~$E$ that is stochastically $T$-periodic for some $T>0$; that is, 
\begin{equation*}\label{03}
\DD\bigl(\eta(\cdot)\bigr) = \DD\bigl(\eta(\cdot +T)\bigr)	
\end{equation*}
(so $T$ is any positive number if the process $\eta$ is stationary). We also assume that, for any $v\in H$ and $\eta \in L_{\mathrm{loc}}^2(\R_+;E)$, Equation~\eqref{02} has a unique solution~$u(t)$ in an appropriate functional class, and that the mapping 
$$
S_T: H\times L_2(0,T;E) \to H, \;\; (v, \eta_{(0,T)} )\mapsto u(T)
$$
is $C^2$-smooth, and its first- and second-order derivatives are bounded on bounded subsets. Let us define intervals $J_k = [(k-1)T, kT)$ for $k\in \Z$ and denote 
\begin{equation}\label{path}
\eta_k:=\eta\bigr|_{J_k}\in L_2(J_k;E). 
\end{equation}
We naturally identify intervals~$J_k$ with $J_T:=[0,T)$ and spaces $L_2(J_k;E)$ with $L_2(J_T;E)=:\hat E$. Then $\{\eta_k, k\in \Z\}$ becomes a stationary process in~$\hat E$, and if we set $u_k=u(kT)$ for $k\ge0$, then the sequence $\{u_k, k\ge0\}$ satisfies Equation~\eqref{01} with $S=S_T$ and~$E$ replaced by~$\hat E$. Thus, we have reduced the problem of long-time behaviour of trajectories of~\eqref{02} to that for some system~\eqref{01}. So below in Introduction, we shall talk about systems~\eqref{01}, and only at the end shall discuss some applications to nonlinear ODEs and  PDEs which can be written as systems~\eqref{02}.

\subsubsection*{System \eqref{01}}
We assume that $\KK := \supp\DD(\eta_k)$ is a compact subset of~$E$ and that some compact set $X \subset H$ is invariant for system~\eqref{01}; that is, $S(X \times \KK) \subset X$. In practice the compactness of~$\KK$ is a  mild restriction after we assumed that the process~$\eta$ is bounded, while an invariant compact set $X\subset H$ exists if~\eqref{01} is a dissipative finite-dimensional system, or if it comes from a system~\eqref{02} which is a well-posed nonlinear parabolic PDE; see the applications discussed at the end of Introduction. Our goal is to prove that system~\eqref{01} is exponentially mixing in the dual-Lipschitz distance (also known as the Kantorovich--Rubinstein distance).\footnote{\label{dln}Recall that the distance between two (Borel) probability measures~$\mu$ and~$\nu$ equals the dual-Lipschitz norm of $\mu-\nu$, defined as $\|\mu-\nu\|_L^*=\sup|\langle f,\mu\rangle-\langle f,\nu\rangle|$, where the supremum is taken over all Lipschitz functions on~$X$ such that $|f| \le1$ and $\Lip(f)\le1$. This distance metrises the weak convergence of measures; in particular, relation~\eqref{04} implies that $\DD (u_k) \strela \mu$ as $k\to\infty$; e.g., see \cite[Section~1.2.3]{KS-book}.} That is, there exists a probability  measure~$\mu$ on~$X$ such that, for any initial state $v\in X$, the trajectory~$\{u_k\}$ of~\eqref{01} satisfies 
\be\label{04}
\| \DD(u_k) -\mu\|_L^* \le C e^{-\gamma k}, \quad k\ge0, 
\ee
for some positive constants~$C$ and~$\gamma$, independent of $v$.\footnote{See below the first item of Remark~\ref{r_intro} for a discussion of relation of this notion  with the traditional mixing for stationary random processes.} If the random variables~$\{\eta_k\}$ are i.i.d., then system~\eqref{01} defines a Markov process in~$H$. The mixing in Markov systems has a long history and goes back to the thirties of the last century. Staring from the pioneering works of Kolmogorov~\cite{kolmogorov-1937} and Doeblin~\cite{doeblin-1938, doeblin-1940} various methods were developed to prove it in one or another form, similar to~\eqref{04}; see the books \cite{hasminskii2012,MT1993,borovkov1998,KS-book}. But if the random variables~$\{\eta_k\}$ and~$\{\eta(t)\}$ are not independent, then the dynamics in systems~\eqref{01} and~\eqref{02} are not Markovian, and their long time behaviour remains essentially an open problem, which was studied only for some specific models. When system~\eqref{02} is a chain of anharmonic oscillators, coupled to heat baths, Eckmann--Pillet--Rey-Bellet~\cite{EPR-1999} used a Markovian reduction to prove, in some special cases, the convergence in law of its solutions to a unique stationary measure (see also Jak\v si\'c--Pillet~\cite{JP-1997,JP-1998} for general Hamiltonian systems coupled to one heat bath). A similar idea was used by Hairer~\cite{hairer-2005-fBM} and Hairer--Ohashi~\cite{HO-2007} for some class of 
dissipative SDEs driven by a fractional Brownian motion. On the other hand, for certain 
 non-Markovian systems that arise in applied probability and are rather different from~\eqref{01} and~\eqref{02}, the mixing is known, probably, starting with Sevastianov's work~\cite{sevastjanov-1957}; see also the paper~\cite{veretennikov-2017} and the references therein. 

The goal of this work is to prove the mixing~\eqref{04} for a large class of systems~\eqref{01} in a phase space~$H$ of finite or infinite dimension (which includes the systems to which Equations~\eqref{02} may be reduced). To do that, apart from the compactness assumptions on the support~$\KK$ of the distribution of~$\eta_k$ and on the invariant set~$X$, below we  impose two more restrictions on the process~$\{\eta_k\}$ and two restrictions on the  mapping~$S$. They are simplified versions of Hypotheses~\hyperlink{GD}{(GD)}, \hyperlink{ALC}{(ALC)}, \hyperlink{DLP'}{(DLP${}'$)} and \hyperlink{SRZ}{(SRZ)}, stated in Section~\ref{s-formulation-MR}. Our Main Theorem, given below, is the {\it first general result\/} that establishes the mixing for a large class of dynamical systems of finite or infinite dimension, stirred by random forces that are not delta-correlated in time.

\subsubsection*{Assumptions on the process $\{ \eta_k\}$}

We recall that $\eeta:=\{\eta_k\}_{k\in\Z}$ is a stationary process in the Banach space~$E$ such that $\supp\DD(\eta_k)=\KK$ is a compact subset of~$E$. For $l\in \Z$, we denote by $\eeta_l=(\dots,\eta_{l-1},\eta_l)$ the {\it $l$-past\/} of the process, so that~$\eeta_l$ is a random element of~$\KK^{\Z_-}$, where  $\Z_- = \Z \setminus \N$. We provide  the product space~$E^{\Z_-}$ with the Tikhonov topology, which can be metrised on its compact subset $\bKK:=\KK^{\Z_-}$ with the help of the distance
\begin{equation}\label{distance-EE}
\dd(\xxi,\xxi')=\sum_{k=-\infty}^0 \iota^{k}\|\xi_{k}-\xi_{k}'\|_E, \quad 
\xxi=(\xi_k,k\in\Z_-),\quad \xxi'=(\xi_k',k\in\Z_-), 
\end{equation}
where $\iota>1$ is an arbitrary fixed number. For any $\xxi\in\bKK$, consider the conditional distribution $\IP\{\eta_1\in\cdot\,|\, \eeta_0 = \xxi\} = : Q(\xxi;\cdot)$ of~$\eta_1$ under the condition that the $0$-past is fixed: $\eeta_0 = \xxi$. This is a probability measure on~$E$, depending on $\xxi \in \bKK$ and supported by~$\KK$ (e.g., see \cite[Section~10.2]{dudley2002}). For any integer~$m\ge1$, the measures $Q(\xxi;\cdot)$ allow one to calculate the conditional distribution $Q_m(\xxi;\Gamma)=\IP\bigl\{(\eta_1,\dots,\eta_m)\in\Gamma\,|\,\eeta_0 =\xxi\bigr\}$, where $\Gamma\in\BB(E^m)$ (see relation~\eqref{conditional-law} in Section~\ref{s-formulation-MR}). For $\xxi\in\bKK$, let $\{ \eta_j(\xxi), j=1, 2, \dots\}$ be a process in~$\KK$ such that, for any $m\ge1$,
$$
\DD\bigl(\eta_1(\xxi),\dots,\eta_m(\xxi)\bigr)=Q_m(\xxi;\cdot). 
$$
We impose on the process $\eta$ the following two restrictions:

\begin{itemize}\sl 
	\item[\hypertarget{eta1}{$\boldsymbol{(\eta1)}$}] If $\dim E<\infty$, then the measures $\{Q(\xxi;\cdot), \xxi \in \bKK\}$ have densities, so that 
\begin{equation}\label{Q=pxi}
	Q(\xxi;\dd x) = p_\xxi(x)\,\dd x,
\end{equation}
	where the function $p:\bKK\times E \to\R$ is Lipschitz-continuous. If~$E$ is a Hilbert space with an orthonormal basis basis $\{f_l\}$, then~$\eta_k$ is representable in the form 
\begin{equation}\label{decomp}
\eta_k = \sum_{l=1}^\infty r_l \eta_k^l f_l,	
\end{equation}
where the numbers $r_l>0$ are such that $\sum |r_l| < \infty$, and $\{\eta_k^l, k\in\Z\}$  are independent real-valued stationary processes $($indexed by the integer $l\ge1)$ such that $|\eta_k^l|\le1$, and the projections $Q^l(\xxi;\cdot)$ of the conditional measure~$Q(\xxi;\cdot)$ to vector span of~$f_l$ are representable in the form~\eqref{Q=pxi} with some Lipschitz-continuous functions~$p_\xxi^l(x)$. Moreover, 
$$
\lim_{\xxi'\to\xxi}\sum_{l=1}^\infty\|p_{\xxi'}^l-p_{\xxi}^l\|_{L^1}=0. 
$$
\end{itemize}
For a general form of this condition when~$E$ is a Banach space, see Hypothesis~\hyperlink{DLP'}{(DLP$'$)} in Section~\ref{s-formulation-MR}. 

The second assumption on the process~$\eta$ requires that it stays close to zero with positive probability, for every past:

\begin{itemize}\sl 
	\item[\hypertarget{eta2}{$\boldsymbol{(\eta2)}$}] 
For any $n\in\N$ and $\delta>0$ there exist $s\in\N$ and $\eps>0$ such that
$$
\pP\{ \| \eta_j(\xxi)\| < \delta, \;j=s+1, \dots, s+n\} \ge \eps
\quad\mbox{for every $\xxi\in \bKK$}. 
$$
\end{itemize}

Crucial assumption \hyperlink{eta1}{$(\eta1)$} means not only  regularity of the conditional distributions of the process~$\eta$,  but also that  it forgets the past exponentially fast. Indeed, definition~\eqref{distance-EE} of the distance on~$\bKK$ implies that~$p_\xxi(x)$ regarded as a function of $\xi_{-l}$ is Lipschitz-continuous with a constant of order~$\iota^{-l}$. Moreover, in Section~\ref{convergence-condlaw}, we show that the results of our work imply that every process satisfying~\hyperlink{eta1}{$(\eta1)$} and~\hyperlink{eta2}{$(\eta2)$} is mixing in some sense that is weaker than it is customary in ergodic theory, and which we call {\it feeble mixing\/}. 

Condition \hyperlink{eta1}{$(\eta1)$} is close to those used by Dobrushin in his work~\cite{dobrushin-1968, dobrushin-1970} on reconstructing the distribution of a lattice random field from its conditional distributions. The results of the above papers immediately imply that every family of measures $\{ Q(\xxi;\cdot), \xxi\in \KK\}$ as in \hyperlink{eta1}{$(\eta1)$} is a system of conditional distributions of a discrete-time stationary process~$\eta$. However, they do not ensure the uniqueness of the law of~$\eta$.\footnote{Dobrushin addressed the uniqueness problem in his later works devoted to Gibbs systems (e.g., see~\cite{dobrushin-1974}). To the best of our knowledge, those results were not extended to random processes.} In this regard, let us note that, in Theorem~\ref{p-stationary-convergence}, we derive from our results that, if in addition to~\hyperlink{eta1}{$(\eta1)$} a family of measures~$\{Q(\xxi;\cdot)\}$ satisfies~\hyperlink{eta2}{$(\eta2)$}, then the law of the process~$\eta$ is defined uniquely. 

Let us note that Hypotheses~\hyperlink{eta1}{$(\eta1)$} and~\hyperlink{eta2}{$(\eta2)$} are independent from the restrictions imposed below on the mapping~$S$. They specify a natural class of bounded stationary processes which suits well to establish the mixing for various systems of the form~\eqref{01}. 

\subsubsection*{Assumptions on the mapping $S$}  
We recall that the map $S:H\times E \to H$ is $C^2$-smooth and bounded on bounded 
sets, together with its derivatives of the first and second order. We assume in addition that it possesses the following two properties: 

\begin{itemize}\sl 
	\item[\hypertarget{S1}{\bf(S1)}] 
There is $k\ge1$ and a number $a\in (0,1)$ such that the trajectory of the free 
system \eqref{01} with $\eta_1=\eta_2=\dots=0$ satisfies the inequality
$$
\| u_k\| \le a \|v\| \quad \mbox{for all $v\in H$}. 
$$
\end{itemize}

Assumption~\hyperlink{S1}{$(S1)$} implies, in particular, that the origin is a stable equilibrium for the free system. We state the second assumption in a weaker and simplified form, sufficient for the validity of the main theorem, referring the reader to~\hyperlink{ALC}{(ALC)} in Section~\ref{s-formulation-MR} for its complete version. 

\begin{itemize}\sl 
	\item[\hypertarget{S2}{\bf(S2)}] 
	There exist a Hilbert space~$V$, compactly and densely embedded into~$H$, and an open set $O \subset H\times E$ containing $X\times \KK$ such that~$S$ defines a $C^1$-smooth mapping from~$H\times E$ to~$V$, and for any $(u,\eta) \in O$ the linear operator $D_\eta S(u,\eta): E\to H$ has a dense image.
\end{itemize} 

Note that if $\dim H<\infty$, then Assumption~\hyperlink{S2}{(S2)} means that, for  $(u,\eta) \in O$, the mapping $D_\eta S(u,\eta): E\to H$ is surjective. Both in  finite- and infinite-dimensional cases, this type of properties of system~\eqref{01} often appears in the control theory and may be verified by well-known  arguments; see examples in Section~\ref{s-applications}. 

\subsubsection*{The main result}  
The statement below is a simplified version of the main result of this paper, proved in Sections~\ref{s-RDS} and~\ref{s-MR}:
 
\begin{mt}
Let us assume that Hypotheses~\hyperlink{eta1}{$(\eta1)$}, \hyperlink{eta2}{$(\eta2)$}, \hyperlink{S1}{\rm(S1)}, and~\hyperlink{S2}{\rm(S2)} are fulfilled. Then there is a Borel probability measure~$\mu$ on~$H$ such that $\supp\mu\subset X$ and, for any initial condition~$v$ which is a random variable in~$X$ independent from $\{\eta_k, k\ge1\}$, inequality~\eqref{04} holds. 
\end{mt}

In Section~\ref{s-remarks},  we show that~$\mu$ is a stationary measure for system~\eqref{01} in the sense that there is an $X$-valued stationary process $\{\hat u_k, k\ge0\}$, which is a {\it weak solution\/} of the equation in~\eqref{01}\,\footnote{See Section~\ref{s-RDS} for a definition of this notion.} such that $\DD(\hat u_k) \equiv \mu$. We also prove that the attraction property for~$\mu$  established in the Main Theorem extends to the convergence in distribution of the whole trajectories: if $v$ is as in the theorem and $\{u_k\}$ is the corresponding trajectory of~\eqref{01}, then for any $s\in\N$ 
\be\label{06}
 \DD(u_k, \dots, u_{k+s}) \strela 
 \DD(\hat u_0, \dots, \hat u_{s}) =: \mu^{s+1}\quad \text{as} \quad k\to\infty. 
\ee
 
\begin{remark}\label{r_intro}%
\begin{enumerate}[leftmargin=*]
	\item Let $\{\hat u_k\}$ be a weak stationary solution of~\eqref{01} and let~$\FF_0$ be the $\sigma$-algebra generated by the initial state~$\hat u_0$. Take any $s\in\N$ and any bounded continuous function $g:X^{s+1}\to\R$. Then, by~\eqref{06}, we have 
 $$
 \E \bigl\{g(\hat u_k, \dots, \hat u_{k+s})\,\big|\,\FF_0 \bigr\}
 =\int_{E^{s+1}} g(\vec w) \mu^{s+1}(\dd\vec{w})+o(1)\quad\mbox{as $k\to\infty$}.
 $$
It follows that if $f:X\to\R$ is another bounded continuous function, then 
\begin{equation*}\label{07}
 \E\bigl(f(\hat u_0) g(\hat u_k, \dots, \hat u_{k+s}) \bigr)
 =\E f(\hat u_0)\,\E g(\hat u_k, \dots, \hat u_{k+s})+o(1)\quad\mbox{as $k\to\infty$}.
\end{equation*}
 If this relation was valid for the indicator functions~$f$ and~$g$ of all Borel sets in~$X$ and~$X^{s+1}$, then the stationary process $\{ \hat u_k\}$ would have been mixing in the sense of classical ergodic theory. Thus, the convergence relations~\eqref{04} and~\eqref{06} can be characterised as a {\it mixing with a smaller set of observables\/}. This type of feeble mixing has been actively studied over the last thirty years; e.g., see~\cite{DDLLLP2007}. It is known, for example, that similarly to  the classical mixing, it implies both SLLN and CLT for the process $\{ \hat u_k\}$; see~\cite{DDLLLP2007} and Section~4.1 in~\cite{KS-book}. 
 
 \item
In fact, our proofs do not use the stationarity of the process~$\{\eta_k\}$ entering~\eqref{01}. They also apply in the case when the conditional distribution $Q(\xxi;\cdot)$ of~$\eta_{l+1}$, given the past $\eeta_l=\xxi$, depends on~$l$, but the bounds on characteristics of decompositions~\eqref{Q=pxi} and~\eqref{decomp} in~\hyperlink{eta1}{$(\eta1)$} and the choice of constants~$s$ and~$\e$ in~\hyperlink{eta2}{$(\eta2)$} may be made uniform in~$l$. 
 
	\item If $\{\eta_k\}$ are independent random variables, then the Main Theorem is close to the result of paper~\cite{KZ-spa2020} (which is a simplified version of the work~\cite{KNS-gafa2020} in which a stronger statement is proved). 

	\item In the case when the phase space~$H$ is finite-dimensional, convergence~\eqref{04} can be strengthen to a similar inequality for the total variation metric. This result can be derived from the Main Theorem and is presented in the subsequent article~\cite{KS-TV-2025}.

	\item When applied to systems~\eqref{02}, our results allow one to treat nonlinear PDEs which depend nonlinearly on stationary processes. This is in contrast with the theory of stochastic PDEs, where the nonlinear PDEs are driven by random forces that as functions of time are white noises and therefore  should enter the equation linearly (since nonlinear functions of such noises are not well defined).

	\item
In the interesting and important case of stochastic PDEs with conservative polynomial nonlinearities (and white in time random forces) existing techniques permit one to establish exponential mixing only for equations with quadratic nonlinearities. They also may apply to an equation with a cubic conservative nonlinearity if it allows for very strong a priori estimates. In the latter case, the existing techniques give only mixing with an algebraic rate. The reason for this restriction comes from the Foia\c s--Prodi inequality, used to prove the mixing in all such systems.\footnote{with the exception of the stochastic Burgers equation, which possesses some very special extra properties.} Consider, for example,  paper~\cite{KN-2013}. It establishes the mixing for the cubic CGL equation~\eqref{CGL} with $\mu=0$ and any~$d$, when~$\eta(t,x)$ is a random field, smooth in~$x$ and white in~$t$.  There, a crucial step of the proof is a Foia\c s--Prodi type estimate at the bottom of p.~415, which involves quadratic exponential moments of the norms of solutions. If the nonlinearity in equation was of degree $r>3$ rather than cubic, then the estimate would have involved exponential moments of solutions of degree $r-1>2$. They are ``hopelessly unbounded'' since it is impossible to believe that norms of solutions for an SPDE with conservative nonlinearity allow for better estimates then those for norms of the Wiener process whose time-derivative drives the equation. At the same time, the Main Theorem above applies to stochastic perturbations of many well posed PDEs with arbitrary growth of their nonlinearities; cf.\ below discussion of Equation~\eqref{CGL}.

\item
In our work we assume that the stationary process $\{\eta_k\}$ in Equation \eqref{01} is exponentially mixing (see Section~\ref{s-SR-mixing}), which implies that  solutions $\{u_k\}$ also are exponentially mixing. If~$\{\eta_k\}$ is algebraically mixing with a suitable rate of mixing, then the proof still applies and shows that the solutions are mixing processes. Finding a minimal rate of mixing for the process~$\{\eta_k\}$ which allows to derive this conclusion  is a natural question which will be addressed in a subsequent work.

\item 
It was drawn to our attention by an anonymous referee that the approach of this work to establish the mixing for Equations~\eqref{01} with bounded stationary and mixing processes~$\{\eta_k\}$ has a similarity with the celebrated 
construction of mixing invariant measures for an Anosov diffeomorphism  $T:M\to M$ of a compact manifold $M$; see~\cite{sinai-1972}. Those measures are obtained as certain Gibbs measures, written in terms of a Markov partition of~$M$. Indeed, a first step in the  approach of~\cite{sinai-1972} is a representation of  an isomorphism~$T$ as a shift operator in the space~$\Omega_\Pi$ of  infinite words in the alphabet, made of the elements of the Markov partition above (the words forming~$\Omega_\Pi$ agree with the map~$T$). On~$\Omega_\Pi$, there is a natural measure~$\bar\mu$ with respect to which  the shift operator  defines   a stationary Markov process. Next, one constructs a class of Gibbs measures, defined by their potentials and the measure~$\bar\mu$, and shows that with respect  to all of them the shift operator in~$\Omega_\Pi$   is a stationary and  mixing process. 
 Exactly one of these Gibbs measures corresponds to the SRB measure for~$T$; see \cite{sinai-1972,bow}.  Further development of Gibbs measures as a tool for the theory of dynamical
systems was closely related to the progress in the problem of reconstructing random processes via their conditional 
distributions, achieved about the same time, starting the works  \cite{dobrushin-1968, dobrushin-1970}.

On the other hand, in our work we consider the space of half-infinite words $\bKK=\KK^{\Z_-}$ in the alphabet $\KK = $supp$\,\DD(\eta_k)$ (which is a compact subset of $E$).  Next, using conditional distributions  of the process $\{\eta_k\}$, 
we construct in the  product space $H\times \bKK$ a random dynamical system. The system is 
supplemented with a random initial  condition at $k=0$, depending on~$v$. Then we show that the 
 measure  in $(H\times \bKK)^{Z_+}$, defined by a random trajectory of  this system, is mapped by the natural projection ${H\times \bKK}\to H$ to the distribution of a solution for~\eqref{01}.\footnote{But this does not mean that, for each random parameter~$\omega$, a trajectory of~\eqref{01} is being lifted to a trajectory of the system in $H\times \bKK$. } The dynamics of the  system in  $H\times \bKK$  turns out to be Markovian and mixing. Moreover, it is Gibbsian in some natural sense, see~\cite{KS-cmp2000}. So the dynamics \eqref{01}, which  is a projection to~$H$ of the one 
 above,  is mixing as well. The process in $H\times \bKK$ has a stationary trajectory, which is unique in the sense of distribution. 
 The projection of  its law to~$H$ is a distribution of a unique stationary solution for the equation in~\eqref{01}.  We note that, in difference with the well understood 
SRB measures for Anosov's diffeomorphisms, our knowledge of the stationary measure for system~\eqref{01} is very poor.
\end{enumerate}
\end{remark}

\subsubsection*{About the proof}
It is well known that any random process becomes Markovian if we add to the process its past. In our case this thesis implies the following (cf.~\cite{BF-1992,EPR-1999,hairer-2005-fBM,HO-2007} and~\cite[Section~13]{borovkov1998}). Recalling that $\eeta_k = (\eta_l,l\le k) \in E^{\Z_-}$, for a trajectory $\{u_k, k\ge0\}$ of~\eqref{01} we denote 
 $$
 U_k = (u_k, \eeta_k) \in H\times E^{\Z_-}=: \HHHH.
 $$
 Then $\{U_k, k\ge0\}$ is a trajectory of the following lifting of system \eqref{01} in $H$ to a system in $\HHHH$:
\begin{align}
U_k &= \SSS(U_{k-1}, \eta_k), \quad k\ge1,\label{Sys}\\
U_0 &=(v,\eeta_0),\label{Syss}
\end{align} 
where
$$
\SSS: \HHHH\times E \to \HHHH, \qquad \bigl((u,\eeta), \xi\bigr) \mapsto \bigl( S(u, \xi), (\eeta, \xi)\bigr).
$$
Since $\eta_k\in \KK$ a.s., Equation~\eqref{Sys} defines a stochastic system in $\mathfrak X = X\times \bKK$, and trajectories of~\eqref{01} in~$X$ define trajectories of \eqref{Sys}, \eqref{Syss} in~$\mathfrak X$. The natural projection $\Pi_X: \mathfrak X\to X$ sends these trajectories of~\eqref{Sys}, \eqref{Syss} back to trajectories of~\eqref{01}. It is not hard to see how to define properly trajectories of~\eqref{Sys}  with any initial data $U_0 \in \HHHH$, and that the corresponding  dynamics in~$\HHHH$ is Markovian; see Section~\ref{s-description}.  So it remains to show that the obtained Markov process in~$\HHHH$ is exponentially mixing with some stationary measure~$\mmu$ since then \eqref{04} holds with $\mu = (\Pi_X)_*\mmu$. But the Markov process, defined by~\eqref{Sys}, is rather complicated because the phase space~$\HHHH$ is ``much bigger'' than~$H$, and the resulting process is not strong Feller. Accordingly, existing approaches do not imply the mixing property for system~\eqref{Sys}. Below, to establish the mixing, we use coupling, combined with the method of Kantorovich functional suggested in \cite{KPS-cmp2002, kuksin-ams2002} (also see in \cite{kuksin2006,KS-book}), and enrich it with some ideas from our works \cite{shirikyan-asens2015, KNS-gafa2020, KZ-spa2020,shirikyan-jems2021}.
	 
The proof goes in two steps. Firstly, in Section~\ref{s-RDS}, we prove Theorem~\ref{t-mixing-kantorovich}, which establishes the mixing for a class of Markov systems that includes~\eqref{Sys} as a particular case. The proof uses Newton's method of quadratic convergence. Due to the complexity of the situation, the method provides only exponential (rather than super-exponential) rate of convergence to a  limit:  the power of the method is used to cope with the fact that available estimates are rather weak. Secondly, in Section~\ref{s-MR}, we verify in Theorem~\ref{t-mixing-dissipation} that if Hypotheses~\hyperlink{eta1}{$(\eta1)$}, \hyperlink{eta2}{$(\eta2)$}, \hyperlink{S1}{\rm(S1)}, and~\hyperlink{S2}{\rm(S2)} hold for system \eqref{01}, then the extended system~\eqref{Sys} meets the conditions of  Theorem~\ref{t-mixing-kantorovich}. This proves the Main Theorem stated above.

\subsubsection*{Applications} 
  The Main Theorem applies to Equations~\eqref{01} obtained by the discrete-time reduction from various ODEs and PDEs depending on bounded stationary processes. To show this, we first construct in Section~\ref{s-examplesprocesses} a large class of $T$-periodic random processes $\eta(t)$, taking values in a Hilbert space~$E$ of finite or infinite dimension and having locally square-integrable trajectories, such that the corresponding path-valued stationary process~$\{\eta_l\}$ in $L^2(0,T;E)$ defined by~\eqref{path} satisfies~\hyperlink{eta1}{$(\eta1)$} and~\hyperlink{eta2}{$(\eta2)$}. Next, in Section~\ref{s-applications}, we give  various applications of  our results to equations driven by additive stationary processes possessing the above properties.

\smallskip
In Section~\ref{s-ode}, we examine a chain of~$n$ anharmonic 1d oscillators with coordinates $q_j$, momenta $p_j$ $(1\le j\le n)$, and  a smooth Hamiltonian 
\begin{equation}\label{hamiltonian}
H(p,q) = \sum_{j=1}^n \frac{p_j^2}{2}+V(q), 
\quad (p,q)=(p_1,\dots,p_n,q_1,\dots,q_n)\in\R^{2n},
\end{equation}
where $V$ is a $C^2$-smooth function. At sites~$j=1$ and $j=n$, the chain is coupled with heat reservoirs which affect the $1^{\text{st}}$ and $n^{\text{th}}$ particles. This is done by adding the terms $-\gamma_1 p_1 +\zeta_1(t)$ and $-\gamma_n p_n +\zeta_n(t)$ to the corresponding equations. Here $\gamma_1, \gamma_n>0$ are damping coefficients and $\zeta_1, \zeta_n$ are statistically $1$-periodic scalar random processes. Our choice of damping terms and stochastic perturbations is inspired by the non-Markovian Langevin equation studied in~\cite{JP-1997} and~\cite{EPR-1999}; see Equations~(1) and~(2.5) in those papers. As was pointed out by an anonymous referee, a physically justified choice of those terms requires that  the noise should be Gaussian and that its correlation and the dissipation kernel satisfy a fluctuation-dissipation relation.  These properties cannot hold in our case, since the dissipation has no memory whereas the noise is bounded and correlated. Nevertheless, we believe that our technique developed for the problem described above provides a relevant mathematical framework that will allow one to deal with a physically justified model of the chain of anharmonic oscillators coupled to {\it several\/} heat reservoirs.

In what follows, we assume that 
\begin{equation}\label{as-boundedness}
	|\zeta_1(t)|, |\zeta_2(t)| \le c_*\quad
	\mbox{for almost every $t\ge0$},
\end{equation} 
where $c_*>0$ is a number. Let us set $\zeta =(\zeta_1, \zeta_n)$ and denote by~$\KK$ the support of the process $\{\zeta(t), 0\le t\le1\}$ in the space  $L_2(0,1; \R^2)$. Let $v(p,q)$ be the vector field of the chain of oscillators in the case when $\zeta_1=\zeta_2=0$. Roughly speaking, we impose the two hypotheses below:   
 
\begin{description}\sl 
	\item [Global stability.]
The vector field~$v(p,q)$ has a globally asymptotically stable equilibrium at $(0,0)$ and admits a coercive\footnote{This means that $L(p,q)\to+\infty$ as $|(p,q)|\to+\infty$.} Lyapunov function $L(p,q)$ such that 
\begin{equation}\label{lyapunov-qualified}
\bigl\langle \nabla L(p,q),v(p,q)\bigr\rangle 
+ 2c_*|\nabla L(p,q)|\le -\delta L(p,q)	
\end{equation}
outside a compact set in~$\R^{2n}$, where $\langle \cdot,\cdot\rangle$ stands for the standard inner product, and~$\delta>0$ is a number. 
\end{description}
This condition implies that there is a compact set $X \subset \R^{2n}$ which, for every $\omega$, is invariant and absorbing for the inhomogeneous dynamics under the study. 

\begin{description}\sl 
	\item [Linearised controllability.]
For every initial condition at $t=0$ in $X$ and every curve  $\zeta (\cdot) \in \KK$, the system linearised at the corresponding solution is linearly controllable at $t=1$. 
\end{description}
In Section~\ref{s-ode}, we prove Theorem~\ref{t-CO}, which shows that the above properties are sufficient for the system in question to be mixing. We also provide an example when those conditions are satisfied. 

It should be noted that chains and networks of anharmonic oscillators and rotators have been the focus of intensive research for the last thirty years; see the papers~\cite{EPR-1999,EH-2000,CE-2016,CEHR-2018,raquepas-2019,NR-2021} and the references therein. The problem in question consists of a system of ODEs driven by Brownian or Poissonian noises, which are {\it unbounded\/} processes in time. In those works, a sufficient condition for mixing is the existence of a {\it Lyapunov function in the stochastic sense\/} and the {\it H\"ormander bracket condition\/}. In our setting, we deal with a bounded noise and require the existence of a {\it Lyapunov function in the deterministic sense\/} (which may be easier to construct; see~\cite{DLS-2024}) and {\it controllability of a family of linearised equation\/}, which is different from, but related to the H\"ormander condition. 

\smallskip
Next, in Section~\ref{s-nse}, we deal with the 2d Navier--Stokes system, which is considered  in a smooth bounded domain $D\subset\R^2$ and is subject to an external random force:
\begin{equation}\label{NS}
	\p_tu+\langle u,\nabla\rangle u-\nu\Delta u+\nabla p=\eta(t,x), \quad\diver u=0, \quad x\in D. 
\end{equation}
Here $u=(u_1,u_2)$ and~$p$ are unknown velocity field and pressure, $\nu>0$ is the viscosity, and~$\eta$ is a random force, described below. Equations~\eqref{NS} are supplemented with the no-slip boundary condition 
\begin{equation}\label{non-slip}
u\bigr|_{\p D}=0	
\end{equation}
and with an initial condition for the velocity
\begin{equation}\label{NS-IC}
u(0,x)=u_0(x).	
\end{equation}
Let us denote by~$H$ the standard space of square-integrable divergence-free vector field on~$D$ whose normal component vanishes on the boundary~$\p D$ (see~\eqref{space-H}). Then, under mild hypotheses on~$\eta$, for any $u_0\in H$, problem~\eqref{NS}, \eqref{NS-IC} has a unique $H$-valued solution $u(t)$, and our goal is to investigate its long-time asymptotics.  

Let $\{e_j, j\ge1\}$ be an orthonormal basis in~$H$, formed by the eigenfunctions of the corresponding Stokes operator. We assume that~$\eta(t)$ is a random process in~$H$ of the form
 $$
 \eta(t)=\sum_{j=1}^\infty b_j\eta^j(t)e_j, 
 $$
 where the sequence $\{ b_j>0\}$ converges to zero sufficiently fast and $\{\eta^j\}$ are i.i.d.\ statistically $1$-periodic real process such that the corresponding stationary process in $L^2(0,1;\R)$ satisfies~\hyperlink{eta1}{$(\eta1)$} and~\hyperlink{eta2}{$(\eta2)$}; see Section~\ref{ss-CTN}. In Theorem~\ref{t-NS}, we show that the Main Theorem applies to~\eqref{NS} and therefore convergence~\eqref{04} holds for the laws of its solutions. 

The Main Theorem also applies if $\eta(t)$ in~\eqref{NS} is a kick force of the form 
\begin{equation}\label{kick-force}
\eta(t,x) = \sum_{k=1}^\infty \eta_k(x) \delta(t-k), \quad \eta_k(x) = \sum_{j=1}^\infty b_j \eta_k^j e_j(x),
\end{equation}
 where the sequence of positive numbers $\{b_j\}$ converges to zero sufficiently fast, and for $j=1,2, \dots$ the real-valued random variables $\{\eta_k^j, k=1,2,\dots\}$ form i.i.d.\ bounded stationary processes. If they  satisfy Hypotheses~\hyperlink{eta1}{$(\eta1)$} and~\hyperlink{eta2}{$(\eta2)$}, then the Main Theorem applies in this situation as well. 

 \smallskip
Finally, in Section~\ref{s-cgl}, we consider the following complex Ginzburg--Landau equation in a smooth  bounded domain $D\subset\R^d$, with the Dirichlet boundary condition:
\begin{equation}\label{CGL}
 \p_tu-(\nu+i\mu)\Delta u+i|u|^{2s}u=\eta(t,x), \quad x\in D.	
\end{equation}
Here $\nu$ and~$\mu$ are positive numbers and $s\ge1$ is an integer such that $s\le 2/(d-2)$ if $d\ge3$. If the external force~$\eta$ is a statistically $1$-periodic random process in a suitable Sobolev space of functions of~$x$, then as we show in Theorem~\ref{t-cgl}, the Main Theorem also applies to the solutions of this equation.

\subsection*{Acknowledgments} 
The authors thank V.\,I.~Bogachev for pertinent remarks and an anonymous referee for useful comments and suggestions. The research of AS was supported by the \textit{CY Initiative of Excellence\/} through the grant {\it Investissements d'Avenir\/} ANR-16-IDEX-0008 and by the {\it ANR project DYNACQUS\/} through the grant ANR-24-CE40-5714-01.

\subsection*{Notation}
We write $\Z$ ($\Z_+$, $\Z_-$) for the set of (non-negative, non-positive) integers,
 denote by~$B_E(a,r)$ an open $r$-ball in a Banach space~$E$, centred at~$a$, and write~$B_E(r)$ if $a=0$. For an integer $m\ge1$, a vector $\vec{\xi}_m=(\xi_1,\dots,\xi_m)\in E^m$, and  a number $\delta>0$, we denote by $\OO_\delta(\vec{\xi}_m)$ the set $B_E(\xi_1,\delta)\times\cdots\times B_E(\xi_m,\delta)$. When~$E$ is finite dimensional, we write~$\ell$ for the Lebesgue measure on~$E$. By~$\DD(\eta)$ we denote the law of a random variable~$\eta$, and $\strela$ signifies the weak convergence of measures. 

 If~$X$ is a Polish space, then  we write $\BB(X)$ for its Borel $\sigma$-algebra  and~$\PP(X)$ for the set of probability measures on~$(X,\BB(X))$. Unless otherwise stated, spaces~$\PP(X)$ are provided with the weak topology. For $\mu_1,\mu_2\in\PP(X)$, we denote by $\|\mu_1-\mu_2\|_{\mathrm{var}}$ the total variation distance between~$\mu_1$ and~$\mu_2$. Borel-measurable maps from~$X$ to another measurable space are often called just~{\it measurable\/}, and a random field $\{\zeta^\omega_x\in X, x\in X\}$ is said to be measurable if it define a measurable map $(\omega,x)\to\zeta_x^\omega$ from~$\Omega\times X$ to~$X$. Given a subset~$Q\subset X$, we denote by~$Q^c=X\setminus Q$ its complement, by~${\mathbf1}_Q$ its indicator function, and by~$\overline Q$ its closure.
 
\section{A class of random dynamical systems}
\label{s-RDS}

\subsection{Description of the model}
\label{s-description}
Let~$\XXXX$ be a compact metric space with a distance~$\dd_\XXXX$, $E$ be a separable Banach space, and $(\Omega,\FF,\IP)$ be a complete probability space. Consider a transition probability $\{\PPP(U;\cdot)\}_{U\in\XXXX}$ from~$\XXXX$ to~$E$.\footnote{Thus, for any $U\in\XXXX$, we have a Borel probability measure $\PPP(U;\cdot)$ on~$E$ such that, for any $\Gamma\in\BB(E)$, the function $U\mapsto \PPP(U;\Gamma)$ is measurable from~$\XXXX$ to~$\R$; cf.\ \cite[Chapter~5]{parthasarathy2005}.} In what follows, $\PPP(U;\cdot)$ plays the role of the law of a random input (or noise) acting on a dynamical system in the space~$\XXXX$, and we shall always assume that it satisfies the following hypothesis:
\begin{description}
	\item [\hypertarget{DN}{(DN) Driving noise}.]\sl 
The mapping $U\mapsto \PPP(U;\cdot)$ is continuous from~$\XXXX$ to the space~$\PP(E)$ endowed with the weak topology. Moreover, there is a number $C>0$ and a compact set~$\KK\subset E$ such that $\supp \PPP(U;\cdot)\subset\KK$ for any $U\in\XXXX$, and 
\begin{equation}\label{lipcshitz-transition}
\|\PPP(U;\cdot)-\PPP(U';\cdot)\|_{\mathrm{var}}\le C\dd_\XXXX(U,U')\quad\mbox{for $U,U'\in\XXXX$}. 
\end{equation}
\end{description}
Let $\{\zeta_k^U, U\in\XXXX\}_{k\ge1}$ be a sequence of independent~$\KK$-valued measurable random fields on~$\XXXX$ defined on a complete probability space $(\widehat\Omega,\widehat\FF,\widehat\IP)$. Thus, the map $(U,\widehat\omega)\mapsto \zeta_k^U(\widehat\omega)$ is measurable from~$\XXXX\times\widehat\Omega$ to~$\KK$ for any $k\ge1$. We shall always assume that
\begin{equation}\label{(a)}
	\DD(\zeta_k^U)=\PPP(U;\cdot)\quad\mbox{\sl for any $U\in\XXXX$ and $k\ge1$}. 
\end{equation}
Let us fix a Lipschitz-continuous map $\SSS:\XXXX\times \KK\to\XXXX$ and consider the following random dynamical system (RDS) in~$\XXXX$:
\begin{equation}\label{RDS}
	U_k=\SSS(U_{k-1},\zeta_k^{U_{k-1}}), \quad k\ge1.
\end{equation}
Equation~\eqref{RDS} is supplemented with the initial condition
\begin{equation}\label{IC}
	U_0=V,
\end{equation}
where $V$ is an $\XXXX$-valued random variable that is defined on the same probability space and is independent of the family $\{\zeta_k^U, U\in\XXXX, k\ge1\}$. 

In what follows, we always assume that the underlying probability space $(\widehat\Omega,\widehat\FF,\widehat\IP)$ for~\eqref{RDS} is the {\it completion of the tensor product of countably many copies of a fixed complete probability space $(\Omega,\FF,\IP)$\/}. Moreover, writing $\widehat\omega=(\omega_k,k\ge0)$ for points of~$\widehat\Omega$, we shall assume that
\begin{equation}\label{dependence}
\mbox{\sl $V$ depends only on~$\omega_0$ and $\zeta_k^U$ depends only on~$\omega_k$ for  $k\ge1$, $U\in\XXXX$}.
\end{equation}
It is well known that, given a  transition probability~$\{\PPP(U;\cdot)\}_{U\in\XXXX}$ on~$E$ that satisfies Hypothesis~\hyperlink{(DN)}{(DN)}, there is a measurable random field $\{\zeta^U,U\in\XXXX\}$ (where for the probability space one may take  the interval $[0,1]$ endowed with the Lebesgue measure) such that $\DD(\zeta^U)=\PPP(U;\cdot)$ for any $U\in\XXXX$; see~\cite[Theorem~11.7.5]{dudley2002} and~\cite[Theorem~1.2.28]{KS-book}. Hence, setting $\zeta_k^U(\widehat\omega)=\zeta^U(\omega_k)$ for $U\in\XXXX$ and $k\ge1$, we obtain a sequence of independent random fields satisfying~\eqref{(a)}. In what follows, we write  $\{U_k(V),k\ge0\}$ for a trajectory of~\eqref{RDS}, \eqref{IC} and call it a {\it solution\/} of~\eqref{RDS}, \eqref{IC} corresponding to the probability space $(\widehat\Omega,\widehat\FF,\widehat\IP)$  and the random fields $\{\zeta_k^\cdot\}$. Very often, we shall drop a specification of the probability space and random fields. 

Let~$\{\FF_k\}_{k\ge0}$ be the natural filtration of~$(\widehat\Omega,\widehat\FF,\widehat\IP)$; that is, $\FF_k$ consists of those elements of~$\widehat\FF$ that depend only on $\oomega_k:=(\omega_0,\dots,\omega_k)$. In what follows, we call a space~$\widehat\Omega$ as above with the filtration~$\{\FF_k\}$ a {\it suitable (filtered) probability space\/} constructed from the space $(\Omega,\FF,\IP)$, always assuming~\eqref{dependence}. The following result is a consequence of~\eqref{(a)}, \eqref{dependence}, and Fubini's theorem. 
 
\begin{proposition}\label{p-markov-chain}
	The family of  trajectories  $\{ U_k(V)\}$ of~\eqref{RDS}, corresponding to all possible deterministic initial conditions~$V\in\XXXX$, form a Markov process in~$\XXXX$, corresponding to the filtration~$\{\FF_k\}$, with the continuous time-$1$ transition probability 
	\begin{equation}\label{TF}
		P_1(U;\cdot)=\SSS_*\bigl(U,\PPP(U;\cdot)\bigr), \quad U\in\XXXX.
	\end{equation}
\end{proposition}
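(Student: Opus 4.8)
The plan is to reduce the whole statement to a single identity: for every $k\ge1$ and every bounded Borel function $f:\XXXX\to\R$,
\begin{equation*}
\E\bigl[f(U_k)\,\big|\,\FF_{k-1}\bigr]=(P_1f)(U_{k-1})\quad\text{a.s.},\qquad (P_1f)(U):=\int_E f\bigl(\SSS(U,\xi)\bigr)\,\PPP(U;\dd\xi).
\end{equation*}
Since the right-hand side is $\sigma(U_{k-1})$-measurable, this identity says at once that the family $\{U_k(V)\}$ is Markov relative to $\{\FF_k\}$ and that its one-step transition kernel is $P_1(U;\cdot)=\SSS_*(U,\PPP(U;\cdot))$; the multi-step Markov property then follows from the tower rule. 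First I would record, by induction on~$k$, that each $U_k(V)$ is $\FF_k$-measurable: $U_0=V$ depends only on~$\omega_0$, and if $U_{k-1}$ depends only on~$\oomega_{k-1}$, then $U_k=\SSS(U_{k-1},\zeta_k^{U_{k-1}})$ depends only on $(\oomega_{k-1},\omega_k)=\oomega_k$, since $\zeta_k^U$ depends only on~$\omega_k$ by~\eqref{dependence}. Here the measurability of the randomized composition $\widehat\omega\mapsto\zeta_k^{U_{k-1}(\widehat\omega)}(\widehat\omega)$ follows from the joint measurability of the field $(U,\widehat\omega)\mapsto\zeta_k^U(\widehat\omega)$, the measurability of~$U_{k-1}$, and the Lipschitz continuity of~$\SSS$.

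The core step is the conditional-expectation computation, which I would carry out by the standard freezing (substitution) lemma. The variable $U_{k-1}$ is $\FF_{k-1}$-measurable, hence a function of~$\oomega_{k-1}$ alone, whereas the entire noise field $\{\zeta_k^U,U\in\XXXX\}$ depends only on~$\omega_k$ and is therefore independent of~$\FF_{k-1}$. Writing $\phi(U,\widehat\omega):=f\bigl(\SSS(U,\zeta_k^U(\widehat\omega))\bigr)$, which is jointly measurable and $\widehat\omega$-dependent only through~$\omega_k$, Fubini's theorem gives
\begin{equation*}
\E\bigl[\phi(U_{k-1},\cdot)\,\big|\,\FF_{k-1}\bigr]=\Phi(U_{k-1})\quad\text{a.s.},\qquad \Phi(U):=\E\,\phi(U,\cdot)=\int_E f\bigl(\SSS(U,\xi)\bigr)\,\PPP(U;\dd\xi),
\end{equation*}
where the last equality uses $\DD(\zeta_k^U)=\PPP(U;\cdot)$ from~\eqref{(a)}. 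Since $U_k=\phi(U_{k-1},\cdot)$ and $\Phi=P_1f$, this is exactly the desired identity.

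Finally, for the continuity of $U\mapsto P_1(U;\cdot)$ in the weak topology, I would fix $f\in C_b(\XXXX)$ and prove that $U\mapsto(P_1f)(U)$ is continuous. For $U_n\to U$ I split
\begin{equation*}
(P_1f)(U_n)-(P_1f)(U)=\int_E\bigl[f(\SSS(U_n,\xi))-f(\SSS(U,\xi))\bigr]\PPP(U_n;\dd\xi)+\int_E f(\SSS(U,\xi))\bigl[\PPP(U_n;\dd\xi)-\PPP(U;\dd\xi)\bigr].
\end{equation*}
The first integral is bounded by $\sup_{\xi\in\KK}|f(\SSS(U_n,\xi))-f(\SSS(U,\xi))|$, which tends to~$0$ because $\SSS$ is Lipschitz and $f$ is uniformly continuous on the compact set $\SSS(\XXXX\times\KK)$; the second tends to~$0$ either by the weak continuity of $U\mapsto\PPP(U;\cdot)$ applied to the fixed bounded continuous function $\xi\mapsto f(\SSS(U,\xi))$, or more quickly via the total-variation bound~\eqref{lipcshitz-transition}, which yields the modulus $\|f\|_\infty\,C\,\dd_\XXXX(U_n,U)$.

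I expect the only genuinely delicate point to be the rigorous application of the freezing lemma: one must verify that it is the \emph{whole} random field $\{\zeta_k^U\}_{U\in\XXXX}$, and not merely the realized value $\zeta_k^{U_{k-1}}$, that is independent of~$\FF_{k-1}$, which is exactly what the product structure of~$\widehat\Omega$ together with the dependence convention~\eqref{dependence} guarantees; everything else reduces to routine measurability bookkeeping and the compactness estimates above.
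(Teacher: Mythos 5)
Your argument is correct and follows essentially the same route as the paper: the paper likewise reduces to the one-step identity, establishes $\FF_k$-measurability by construction, and obtains $\E\{f(U_{k+1})\,|\,\FF_k\}=\E f(\SSS(x,\zeta_1^x))\big|_{x=U_k}$ via an iterated expectation over $\oomega_k$ and $\omega_{k+1}$ (i.e.\ exactly your freezing/Fubini step, justified by~\eqref{(a)} and~\eqref{dependence}). The only difference is that you also spell out the weak continuity of $U\mapsto P_1(U;\cdot)$, which the paper asserts in the statement but does not verify in its proof; your verification is correct.
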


Let us note that the right-hand side of~\eqref{TF} is a measure in~$\PP(\XXXX)$ which is the image of the measure $\PPP(U;\cdot)$ under the mapping from~$\KK$ to~$\XXXX$ taking~$\eta$ to~$\SSS(U,\eta)$. 
Due to~\eqref{TF}, the Markov semigroups defined by the process do not depend on the specific choice of a suitable probability space~$(\widehat\Omega,\widehat\FF,\widehat\IP)$ and random fields~$\{\zeta_k^\cdot\}$. Therefore, the laws of solutions~$\{U_k(V)\}$ are also independent of that choice. 

\begin{proof}[Proof of Proposition~\ref{p-markov-chain}]
By construction, the random variable~$U_k(V)$, $k\ge0$, depends only on~$\oomega_k$, so it is $\FF_k$-measurable. We now prove that, for any integers $k,m\ge0$ and any bounded measurable function $f:\XXXX\to\R$, 
\begin{equation}\label{MP-general}
	\E\{f(U_{k+m}(V))\,|\,\FF_k\}=\hat f_m(U_k(V)) \quad \mbox{$\IP$-almost surely}, 
\end{equation}
where $\hat f_m:\XXXX\to\R$ is a bounded measurable function depending only on~$f$ and~$m$. The relation is obvious for $m=0$, and a simple induction argument shows that it suffices to establish it for $m=1$. But for $m=1$ and any $\FF_k$-measurable function function $\varphi(\oomega_k)$, we have 
\begin{align*}
	\E\bigl(f(U_{k+1}(V))\varphi(\oomega_k)\bigr)&=
	\E^{\oomega_k}\Bigl[\E^{\omega_{k+1}} f\bigl(\SSS(U_k^{\oomega_k},\zeta_{k+1}^{U_k^{\oomega_k}}(\omega_{k+1})\bigl)\bigr)\varphi(\oomega_k)\Bigr]\\
	&=\E^{\oomega_k}\Bigl[\varphi(\oomega_k)\,\E f\bigl(\SSS(U_k^{\oomega_k},\zeta_1^{U_k^{\oomega_k}}\bigl)\bigr)\Bigr],
\end{align*}
since the law of~$\zeta_k^U$ does not depend on~$k$. Thus, 
$$
\E\{f(U_{k+1}(V))\,|\,\FF_k\}=\E f\bigl(\SSS(x,\zeta_{1}^x)\bigr)\bigr|_{x=U_k}.
$$
It remains to note that the map $x\mapsto \E f(\SSS(x,\zeta_{1}^x))$  defines a bounded measurable function~$\hat f_1(x)$, depending only on~$f$, so~\eqref{MP-general} holds for $m=1$. Relation~\eqref{TF} follows obviously from the last equality. 
\end{proof}

In what follows, we denote by $\{P_k(U;\cdot), k\in\Z_+, U\in\XXXX\}$ the transition probability for the Markov process associated with the RDS~\eqref{RDS} (it is given by~\eqref{TF} for $k=1$). Often, when studying the long-time asymptotics of trajectories for~\eqref{RDS}, we need to replace the underlying probability space~$\widehat\Omega$ with another {\it suitable\/} probability space. This will not change the Markovian character of the dynamics and the laws of solutions of~\eqref{RDS}. The next subsection describes an important example of a dynamical system driven by a stationary process that can be reduced to an RDS of the form~\eqref{RDS}.

\subsection{Dynamical systems with  stationary noises}
\label{ss-RDS}

Consider an RDS of the form~\eqref{01}:
\begin{equation}\label{stationary-RDS}
	u_k=S(u_{k-1},\eta_k), \quad k\ge1, \qquad u_0 =v.
\end{equation}
Here $u_k\in H$, where~$H$ is a Hilbert space, $\{\eta_k\}_{k\in\Z}$ is a stationary process that  takes values in a Banach space~$E$ and is defined on a complete probability space~$(\Omega,\FF,\IP)$, and $S:H\times E\to H$ is a continuous map. We assume that $\supp\DD(\eta_k)=:\KK$ is a compact subset of~$E$ (so $\{\eta_k\}$ also may be regarded as a process in~$\KK$), and suppose that there is a compact subset $X\subset H$ such that $S(X\times\KK)\subset X$. Then~\eqref{stationary-RDS} defines an RDS in~$X$. We are interested in the long-time behaviour of trajectories of this system. 

In what follows, we distinguish between {\it weak\/} and {\it strong\/} solutions of~\eqref{stationary-RDS}: strong solutions are those that are constructed by relations~\eqref{stationary-RDS}, and will be denoted $\{u_k(v),k=0,1,\dots\}$, whereas weak solutions are, as usual, trajectories~$\{\tilde u_k\}$ of system~\eqref{stationary-RDS}, in which~$\{\eta_k\}$ and~$v$ are replaced with another process~$\{\tilde\eta_k\}$ and a random variable~$\tilde v$ having the same joint distribution and possibly defined on a different probability space. All weak solutions have the same distribution. This is in difference with system~\eqref{RDS}, where the random fields~$\{\zeta_k^\cdot\}$ must be defined on some suitable probability space, and~$\zeta_k^\cdot$ must be $\FF_k$-measurable. Accordingly, a weak solution of the equation in~\eqref{stationary-RDS} is that of problem~\eqref{stationary-RDS} with some~$v$. Weak solutions of~\eqref{stationary-RDS} will be denoted as~$(\{\tilde u_k\},\{\tilde\eta_k\})$ or, for short, as~$\{\tilde u_k\}$.

As before, we provide the space~$E^{\Z_-}$ with the Tikhonov topology whose restriction to the compact subset $\bKK:=\KK^{\Z_-}$ is metrised by distance~\eqref{distance-EE}. We define the measure 
\begin{equation}\label{sigma-etak}
\sigma:=\DD(\eeta_-), \quad \eeta_-=\{\eta_k,k\in\Z_-\},	
\end{equation}
which is an element of $\PP(\bKK)$, and denote by $\EE\subset \bKK$ its support. 


For any $\xxi\in\bKK$, denote by $Q(\xxi;\cdot)$ the conditional law $\IP\{\eta_1\in\cdot\,|\,\eeta_-=\xxi\}$ of~$\eta_1$ given the past $\{\eta_k=\xi_k\}_{k\in\Z_-}$; see~\cite[Theorem~10.2.1]{dudley2002}. It is uniquely defined up to~$\xxi$'s from a negligible set  with respect to the measure~$\sigma$. From now on we assume in addition that the process~$\{\eta_k\}$ in Eq.~\eqref{stationary-RDS}satisfies the following continuity condition, similar to the strong Feller property for Markov processes.\footnote{We emphasise that this ``strong Feller'' holds for the noise, rather than the process defined by~\eqref{stationary-RDS}.} 
\begin{itemize}
	\item [\hypertarget{Fel}{\bf(SF)}] \sl The conditional measure $Q(\xxi;\cdot)$ can be chosen to be a continuous mapping from~$\EE$ to~$\PP(\KK)\subset\PP(E)$. Moreover, there is $C>0$ such that 
\begin{equation*}\label{LP-measures}
\|Q(\xxi;\cdot)-Q(\xxi';\cdot)\|_{\mathrm{var}}\le C\dd(\xxi,\xxi')\quad
	\mbox{for any $\xxi,\xxi'\in\EE$}. 
\end{equation*}
\end{itemize}
This condition allows us to (uniquely) define $Q(\xxi;\cdot)$ for $\xxi\in\EE$ and  regard it as a transition probability, continuous in~$\xxi$ (see~\cite[Section~5.1]{parthasarathy2005}).

We now fix a specific realisation of the process~$\{\eta_k\}$ in~\eqref{stationary-RDS}, convenient for the purposes of this work. We construct it as a process defined on some suitable probability space $(\widehat\Omega,\widehat\FF,\widehat\IP)$, where $\widehat\Omega=\{(\omega_0,\omega_1,\dots)\}$ with $\omega_j\in\Omega$ (see Section~\ref{s-description}). For $k\le0$, the random variables~$\eta_k$ depends on~$\omega_0$ as for the original process~$\eta_k$ in~\eqref{stationary-RDS}, while for $k\ge1$ we take $\eta_k=\eta_k(\oomega_k)=\eta_k(\oomega_{k-1},\omega_k)$ with $\oomega_k:=(\omega_0,\dots,\omega_k)$, where for any $\oomega_{k-1}\in \Omega^k$ the mapping $\omega\mapsto\eta_k(\oomega_{k-1},\omega)$ is a random variable $\omega\mapsto \zeta_k^{\eeta_{k-1}}(\omega)$, with law equal to $Q(\eeta_{k-1};\cdot)$, such that $\zeta_k^\eeta(\omega)$ is a measurable random  field, for each $k\ge1$. Here $\bKK\ni\eeta_k:=(\eta_l,l\le k)$. An induction argument in~$k\ge0$ readily shows that this modification of the process~$\{\eta_k\}$ is distributed as the original one. 

Let us introduce the product space $\XXXX=X\times\EE$ endowed with the metric 
\begin{equation}\label{metric-X}
\dd_\XXXX(U,U')=L\,\|v-v'\|_H+\dd(\xxi,\xxi'),
\end{equation}
where $L\ge1$ is a parameter specified later, and denote the natural projections to the two components by 
\begin{equation}\label{nat-proj}
	\Pi_X:\XXXX\to X, \quad \Pi_\EE:\XXXX\to\EE.
\end{equation}
We consider a map $\SSS:\XXXX\times \KK\to\XXXX$ defined by the relation
\begin{equation}\label{S-stationary}
\SSS(U,\eta)=\bigl(S(v,\eta),(\xxi,\eta)\bigr), 	
\end{equation}
where $U=(v,\xxi)\in\XXXX$ and $\eta\in \KK$. Finally, we introduce an RDS in~$\XXXX$ by 
\begin{equation}\label{RDS-example}
	U_k=\SSS(U_{k-1},\zeta_k^{\xxi_{k-1}}), \quad k\ge1,
\end{equation}
where $U_k=(v_k,\xxi_k)$. The system~\eqref{RDS-example} is supplemented with the initial condition 
\begin{equation*}\label{in_cond}
	U_0=V:=(v,\xxi), 
\end{equation*}
where $(v,\xxi)$ is a random variable depending only on~$\omega_0$. We thus obtain a special case of system~\eqref{RDS}. Condition~\hyperlink{Fel}{(SF)} implies \hyperlink{DN}{(DN)} since $\DD(\zeta_k^{\xxi}) = Q(\xxi; \cdot)$,
so by  Proposition~\ref{p-markov-chain} system \eqref{RDS-example} defines a Markov process in the extended phase space~$\XXXX$. 

Let $\{u_k=u_k(v)\}$ be a trajectory of the RDS~\eqref{stationary-RDS}. For $k\ge0$, we denote $\widehat U_k=(u_k,\eeta_k)$, where $\eeta_k=(\eta_l,l\le k)$. By the construction of the version of the process~$\{\eta_k\}$ described above, for $k\ge0$ we have $\eta_k=\eta_k(\oomega_k)=\eta_k(\oomega_{k-1},\omega_k)$ (so also $\eeta_k=\eeta_k(\oomega_k)$), where $\eta_k(\oomega_{k-1},\cdot)$ is the  random variable $\omega_k\mapsto \zeta_k^{\eeta_{k-1}(\oomega_{k-1})}(\omega_k)$ with $\DD(\zeta_k^\xxi)=Q(\xxi;\cdot)$. It follows that 
$$
\SSS\bigl(\widehat U_k,\zeta_k^{\eeta_{k-1}}\bigr)
=\bigl(S(u_{k-1},\eta_k),(\eeta_{k-1},\eta_k)\bigr)=\widehat U_k.
$$
So $\{\widehat U_k,k\ge0\}$ is a trajectory of~\eqref{RDS-example} with $\widehat U_0=(v_0,\eeta_0)$. We have thus proved the following result. 

\begin{lemma}\label{l-reduction}
	Let an $\FF_0$-measurable random initial condition $V=(v,\xxi)$ be such that $v\in H$ and~$\xxi$ is a $\bKK$-valued random variable whose law is equal to~$\sigma$ (see \eqref{sigma-etak}). 
	Let $\{U_k=(v_k,\xi_k),k\ge0\}$ be a solution of~\eqref{RDS-example}, \eqref{IC}, where for $U=(v,\xxi)\in\XXXX$ we have $\DD(\zeta_k^U)=Q(\xxi;\cdot)$ {\rm(}so $\PPP((v,\xxi),\cdot)=Q(\xxi;\cdot)${\rm)}. Then, for any $k\ge0$, the distribution of the vector $[U_0,\dots,U_k]$, $U_j=(v_j, \xxi_j)$, 
	 is such that 
	\begin{gather}\label{new-relation}
		\DD([v_0,\dots,v_k])=\DD\bigl([u_0(v),\dots,u_k(v)]\bigr), \quad \DD(\xxi_k)=\sigma.
	\end{gather}
\end{lemma}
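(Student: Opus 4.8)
The plan is to obtain both assertions from the pathwise identification made in the paragraph preceding the statement, together with the fact --- incorporated in Proposition~\ref{p-markov-chain} and the remark following it --- that the law of a solution of~\eqref{RDS-example} is determined by the law of its initial condition alone, regardless of the particular suitable probability space or the realisation of the fields $\{\zeta_k^\cdot\}$.

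First I would recall the key observation just established: for the specific realisation of $\{\eta_k\}$ built on $(\widehat\Omega,\widehat\FF,\widehat\IP)$, the natural lift $\widehat U_k=(u_k(v),\eeta_k)$ of a trajectory of~\eqref{stationary-RDS} is itself a solution of~\eqref{RDS-example}, driven by the fields $\zeta_k^{\eeta_{k-1}}$ (whose values are the $\eta_k$) and issued from $\widehat U_0=(v,\eeta_0)$. Since $\eeta_0=\eeta_-$, we have $\DD(\eeta_0)=\sigma$ by the definition~\eqref{sigma-etak}, so $\widehat U_0$ and $V=(v,\xxi)$ are of the same type, both carrying an $H$-component $v$ and a $\bKK$-component of law $\sigma$.

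Next I would invoke Proposition~\ref{p-markov-chain}: the transition probabilities, and hence the joint law of the whole trajectory $\{U_k\}$, depend only on $\DD(U_0)$. It therefore suffices to check that the two initial data agree in law, i.e.\ $\DD(v,\xxi)=\DD(v,\eeta_0)$; this reduces to $\DD(\xxi)=\sigma$ under the $\FF_0$-measurability and independence conventions in force (and is immediate when $v$ is deterministic). Granting this, the abstract solution $\{U_k\}$ and the lift $\{\widehat U_k\}$ coincide in distribution, so that $\DD([U_0,\dots,U_k])=\DD([\widehat U_0,\dots,\widehat U_k])$ for every $k\ge0$.

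It then remains to project. Applying $\Pi_X$ coordinate by coordinate gives $\DD([v_0,\dots,v_k])=\DD([u_0(v),\dots,u_k(v)])$, the first equality in~\eqref{new-relation}. For the second, the $\Pi_\EE$-component of $\widehat U_k$ equals $\eeta_k$, and stationarity of $\{\eta_k\}$ yields $\DD(\xxi_k)=\DD(\eeta_k)=\sigma$. The only point demanding any care is the transfer from the concrete lift to the abstract solution $\{U_k\}$, namely the claim that the law depends on nothing but $\DD(U_0)$; this is precisely the content of the Markov formulation of Proposition~\ref{p-markov-chain}, after which everything else is bookkeeping.
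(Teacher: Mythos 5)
Your proposal is correct and follows essentially the same route as the paper: the paper's argument is precisely the pathwise identification $\widehat U_k=(u_k(v),\eeta_k)$ as a trajectory of~\eqref{RDS-example} issued from $(v,\eeta_0)$ with $\DD(\eeta_0)=\sigma$, combined with the (implicit) appeal to the fact that the laws of solutions depend only on $\DD(U_0)$ and the transition probabilities. You merely make the transfer step via Proposition~\ref{p-markov-chain} explicit, and your caveat about matching the \emph{joint} law of $(v,\xxi)$ with that of $(v,\eeta_0)$ is a reasonable observation that the paper glosses over as well.
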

Since all weak solutions have the same law, relation~\eqref{new-relation} remains true if we replace there the strong solution~$\{u_k(v)\}$ with any weak solution of~\eqref{stationary-RDS}. 

In conclusion, let us note that both spaces~$H$ and~$E$ can be assumed to be separable. Indeed, denoting by~$\widetilde H$ and~$\widetilde E$ the closures of the vector spans of~$X$ and~$\KK$, respectively, we define the map 
$$
\widetilde S:\widetilde H\times\widetilde E\to \widetilde H, 
\quad (u,\eta)\mapsto{\mathsf P}_{\widetilde H}S(u,\eta),
$$
where ${\mathsf P}_{\widetilde H}:H\to H$ stands for the orthogonal projection to~$\widetilde H$. Since the image of~$X\times\KK$ under~$S$ is contained in~$X$, we see that~$\widetilde S$ and~$S$ coincide on~$X\times\KK$. Thus, replacement of~$S$ with~$\widetilde S$ in~\eqref{stationary-RDS} will not change the trajectories, and we obtain an RDS of the same form with separable spaces.

\subsection{A criterion for mixing}
\label{s-criterion}
We now prove a result providing a sufficient condition for the property of exponential mixing for the RDS~\eqref{RDS}. To this end, we impose  the two hypotheses below on the map~$\SSS$ and the transition probabilities~$\PPP(U;\cdot)$
\begin{description}
\item [\hypertarget{GCP}{(GCP)} Global controllability to points.]\sl For any $\e>0$, there is an integer $m\ge1$ and a point $\widehat U\in\XXXX$ {\rm(}both depending on~$\e)$ for which the following property holds: for any $U\in\XXXX$ there are $\xi_1,\dots,\xi_m\in \KK$ such that the points $U_0,\dots,U_m\in\XXXX$ defined by $U_0=U$ and $U_{k}=\SSS(U_{k-1}, \xi_k)$ for $1\le k\le m$ satisfy the relations
\begin{gather}\label{xi_k-support}
	\xi_k\in\supp \PPP (U_{k-1}, \cdot)\quad\mbox{for $1\le k\le m$}, \qquad \dd_\XXXX(U_m,\widehat U)<\e.
\end{gather}
\end{description}

Let us set
\begin{align}\label{D}
D_\delta&=\{(U,U')\in\XXXX\times\XXXX:\dd_\XXXX(U,U') \le \delta\}
\end{align}
and define the Lipschitz seminorm of a map~$\varPhi:\KK\to E$  by the relation
$$
\Lip_\xi(\varPhi):=\sup_{\xi_1,\xi_2}\frac{\|\varPhi(\xi_1)-\varPhi(\xi_2)\|_E}{\|\xi_1-\xi_2\|_E},
$$
where the supremum is taken over all $\xi_1,\xi_2\in\KK$ such that $0<\|\xi_1-\xi_2\|_E\le1$.

\begin{description}
	\item [\hypertarget{LAC}{(LAC) Local approximate controllability}.]\sl 
There are positive numbers $C_*$, $\delta$, and $q<1$, a finite-dimensional subspace $F\subset E$, and a continuous map 
$$
\varPhi:D_\delta\times \KK\to F 
$$
such that, for any $(U,U')\in D_\delta$, 
\begin{align}
	\sup_{\xi\in \KK}\|\varPhi(U,U',\xi)\|_E+\Lip_\xi\bigl(\varPhi(U,U',\cdot)\bigr)&\le C_*\dd_\XXXX(U,U'), \label{Phi-bound}\\
	\sup_{\xi\in \KK}\dd_\XXXX\bigl(\SSS(U,\xi),\SSS(U',\xi+\varPhi(U,U',\xi))\bigr) &\le q\,\dd_\XXXX(U,U'). \label{Phi-squeezing}
\end{align}
\end{description}
We shall also need a decomposability and regularity hypothesis on the driving noise in~\eqref{RDS}. We recall that a closed subspace $F\subset E$ is said to be {\it complemented\/} if there is another closed subspace~$F^\dagger\subset E$ such that $E=F\dotplus F^\dagger$. Obviously, the existence of a {\it complementary subspace\/} is equivalent to the existence of a continuous projection $\mathsf P_F:E\to E$ onto~$F$. A simple consequence of the Hahn--Banach theorem is that any finite-dimensional subspace is complemented. In what follows, if~$E$ is represented as the direct sum of closed subspaces~$F$ and~$F^\dagger$, then we denote by~$\mathsf P_F$ and~$\mathsf P_{F^\dagger}$ the associated projections. 

\begin{description}
	\item [\hypertarget{DLP}{(DLP) Decomposability and Lipschitz property}.]\sl The  subspace~$F$ entering~\hyperlink{LAC}{\rm(LAC)} possesses a complementary subspace $F^\dagger\subset E$ such that the measure $\PPP(U;\cdot)$ admits a disintegration of the form 
\begin{equation}\label{decomposition-E-E}
	\PPP (U;\dd\xi_F,\dd \xi_{F^\dagger})=\PPP_{F^\dagger}(U;\dd\xi_{F^\dagger})\PPP_F(U,\xi_{F^\dagger};\dd\xi_F), \quad U\in\XXXX,
\end{equation}
where $\PPP_{F^\dagger}(U;\cdot)$ denotes the projection of~$\PPP(U;\cdot)$ to the subspace~$F^\dagger$, and $\PPP_F(U,\xi_{F^\dagger};\cdot)\in \PP(F)$ 
stands for the regular conditional probability of~$\PPP(U;\cdot)$ when the projection to~$F^\dagger$ is fixed and is equal to~$\xi_{F^\dagger}$. Moreover, there is a function $p_F:\XXXX\times E\to\R_+$, Lipschitz continuous in both arguments, i.e.
\begin{equation}\label{Lip}
	|p_F(U,\xi)-p_F(U',\xi')|\le M\bigl(\dd_\XXXX(U,U')+\|\xi-\xi'\|_E\bigr)\mbox{  for $U, U'\in\XXXX$, $\xi,\xi'\in E$}, 
\end{equation}
such that, for any $U\in\XXXX$ and $\xi_{F^\dagger}\in F^\dagger$, we have
\begin{equation}\label{Q-F-density}
\PPP_F(U,\xi_{F^\dagger};\dd\xi_F)=p_F(U,\xi_{F^\dagger},\xi_F)\,\ell_F(\dd\xi_F),
\end{equation} 
where $\ell_F$ stands for the Lebesgue measure on~$F$. 
\end{description}

In view of the well-known formula for the total variation distance (e.g., see Proposition~1.2.7 in~\cite{KS-book}), it follows from~\eqref{Lip} and~\eqref{Q-F-density} that, for any $U,U'\in\XXXX$ and $\xi_{F^\dagger},\xi_{F^\dagger}'\in F^\dagger$, we have 
\begin{equation*}\label{Lip-U}
\|\PPP_F(U,\xi_{F^\dagger};\cdot)-\PPP_F(U',\xi_{F^\dagger}';\cdot)\|_{\mathrm{var}}\le M_1\,\bigl(\dd_\XXXX(U,U')+\|\xi_{F^\dagger}-\xi_{F^\dagger}'\|_E\bigr),
\end{equation*}
where $M_1>0$ does not depend on~$U,U'$ and~$\xi_{F^\dagger},\xi_{F^\dagger}'$. 

Recall that, by Proposition~\ref{p-markov-chain}, the trajectories of~\eqref{RDS} form a discrete-time Markov process. The following theorem is the main result of this section.

\begin{theorem}\label{t-mixing-kantorovich}
Suppose that Hypotheses~\hyperlink{DN}{\rm (DN)}, \hyperlink{GCP}{\rm (GCP)}, \hyperlink{LAC}{\rm (LAC)}, and~\hyperlink{DLP}{\rm(DLP)} are fulfilled. Then the Markov process associated with the RDS~\eqref{RDS} has a unique stationary measure $\mmu\in\PP(\XXXX)$, and there are positive numbers~$C$ and~$\gamma$ such that\,\footnote{Recall that the norm~$\|\cdot\|_L^*$ was defined in footnote~\ref{dln}.} 
	\begin{equation}\label{mixing-RDS}
		\|P_k(U;\cdot)-\mmu\|_L^*\le Ce^{-\gamma k}\quad\mbox{for any $U\in\XXXX$, $k\ge0$}. 
	\end{equation}
	Moreover, if $\{V_k,k\ge0\}$ is a stationary trajectory of this Markov process, $\DD(V_k)\equiv\mmu$, and $\mmu_s=\DD(V_1,\dots,V_s)$, then 
	\begin{equation}\label{convergence-V-s}
		\|\DD(U_k,\dots,U_{k+s})-\mmu_{s+1}\|_L^*\le C_se^{-\gamma k}
		\quad\mbox{for any $U\in\XXXX$, $k\ge0$}, 
	\end{equation}
where $\{U_k\}$ stands for the trajectory of~\eqref{RDS} issued from~$U$. 
\end{theorem}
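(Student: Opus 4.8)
The plan is to prove exponential mixing for the Markov process defined by~\eqref{RDS} via a coupling construction combined with the Kantorovich functional method, establishing a contraction in a suitably weighted dual-Lipschitz-type metric. The overall strategy rests on the three structural hypotheses: \hyperlink{GCP}{(GCP)} provides a common target point to which all trajectories can be steered (a recurrence/return mechanism), \hyperlink{LAC}{(LAC)} provides the local squeezing that contracts distances once two copies are close, and \hyperlink{DLP}{(DLP)} provides the density structure on a finite-dimensional component of the noise that makes a successful coupling event possible with positive probability. I would first reduce the problem to showing a one-step (or finitely-many-step) contraction estimate of the form
\begin{equation*}
\sup_{U,U'}\frac{\|P_m(U;\cdot)-P_m(U';\cdot)\|_L^*}{d_m(U,U')}\le q'<1
\end{equation*}
for an appropriate metric $d_m$ and some fixed $m$, from which~\eqref{mixing-RDS} follows by standard iteration and the existence/uniqueness of~$\mmu$ by a Cauchy argument in the complete metric space $(\PP(\XXXX),\|\cdot\|_L^*)$.

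The heart of the argument is the coupling construction. For two initial points $U,U'$ I would build, on a common probability space, a pair of trajectories $(U_k,U_k')$ each marginally distributed according to the dynamics~\eqref{RDS}. When the current states lie in the diagonal neighbourhood $D_\delta$, I would exploit \hyperlink{LAC}{(LAC)}: the map $\varPhi$ shifts the noise of the second copy so that, by~\eqref{Phi-squeezing}, the images contract by the factor $q<1$. The shift lives in the finite-dimensional subspace~$F$, and \hyperlink{DLP}{(DLP)} guarantees that the $F$-marginal of the conditional noise law has a Lipschitz density with respect to Lebesgue measure; this lets me use a maximal-coupling argument in~$F$ so that with probability close to one the two shifted noises coincide, i.e.\ $\xi'=\xi+\varPhi(U,U',\xi)$, while the discrepancy between the two coupled laws is controlled through the total-variation bound derived from~\eqref{Lip} and~\eqref{Q-F-density}. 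On the complementary component $F^\dagger$ the two copies simply share the same realisation. When the pair is \emph{not} in $D_\delta$, I would instead use \hyperlink{GCP}{(GCP)} to drive both copies toward the common point $\widehat U$, so that after $m$ steps they enter the neighbourhood $D_\delta$ with positive probability; the Lipschitz bound~\eqref{Phi-bound} keeps the control cost $\varPhi$ proportional to $d_\XXXX(U,U')$, which is what feeds the contraction.

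To convert these per-step estimates into a genuine contraction I would introduce a Kantorovich functional—a weighted metric of the form $d(U,U')=d_\XXXX(U,U')^{\alpha}$ or, following~\cite{KPS-cmp2002,kuksin-ams2002}, a metric defined through the optimal-coupling cost that combines the contracting effect on $D_\delta$ with the returning effect off it. The weighting is chosen so that the expected distance after one coupling step is strictly smaller than the initial distance, balancing the contraction factor $q$ against the (small but nonzero) probability of coupling failure and the cost of the excursions needed to re-enter $D_\delta$. Once a strict contraction $\E\,d(U_1,U_1')\le\theta\,d(U,U')$ with $\theta<1$ is in hand, iterating gives geometric decay of $\E\,d(U_k,U_k')$, hence~\eqref{mixing-RDS}, and standard stationarity arguments yield the unique invariant measure $\mmu$. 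The multi-time statement~\eqref{convergence-V-s} then follows by coupling the two trajectories up to the first time they meet (or $\eps$-meet) and running them jointly thereafter, so that the joint laws of $(U_k,\dots,U_{k+s})$ and $(V_1,\dots,V_s)$ agree up to an exponentially small error.

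The main obstacle I anticipate is the interplay between the \emph{non-strong-Feller} nature of the process—stressed in the introduction because the phase space $\XXXX=X\times\bKK$ carries the infinite past—and the need for a quantitative coupling. The density hypothesis \hyperlink{DLP}{(DLP)} only controls a finite-dimensional slice $F$ of a single noise increment, so a single step contracts the $H$-component via \hyperlink{LAC}{(LAC)} but barely touches the tail of the past encoded in the $\bKK$-factor; the metric~\eqref{distance-EE} with weight $\iota^{-l}$ is precisely what makes the far past negligible, and the delicate point will be to verify that the squeezing in~\eqref{Phi-squeezing} genuinely dominates the slow ``memory'' carried by the past coordinates. I expect the rigorous bookkeeping of these two scales—reconciling the finite-dimensional coupling probability from \hyperlink{DLP}{(DLP)} with the geometric decay of the past-metric—to require the Newton-type iterative scheme alluded to in the introduction, which is what ultimately delivers only exponential (rather than super-exponential) convergence.
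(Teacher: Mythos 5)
Your proposal follows essentially the same route as the paper: a Kantorovich-functional contraction argument built on a coupling in which, near the diagonal, the second copy's noise is shifted by the map $\varPhi$ from~\hyperlink{LAC}{(LAC)} and glued to a maximal coupling whose failure probability is controlled in total variation through the density structure of~\hyperlink{DLP}{(DLP)}, while~\hyperlink{GCP}{(GCP)} supplies the recurrence to the diagonal neighbourhood; the multi-time claim is likewise handled via the Markov property. The only inaccuracy is peripheral: the Newton-type quadratic scheme you anticipate does not appear in the proof of this theorem (it enters only in the verification of~\hyperlink{LAC}{(LAC)} for the concrete systems), and the paper's coupling takes a maximal coupling of the full pushforward measure on~$E$ rather than only of the $F$-marginals, but neither point affects the correctness of your outline.
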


A proof of this result is presented in the next subsection. It uses Theorem~3.1.1 of~\cite{KS-book} and some ideas from~\cite{shirikyan-asens2015,KNS-gafa2020,KZ-spa2020,shirikyan-jems2021}.

\subsection{Proof of Theorem~\ref{t-mixing-kantorovich}}
\label{s-proof-of criterion}
We begin with a description of the scheme of the proof. It is well known that, to prove~\eqref{mixing-RDS} under some minimal compactness assumption (which trivially holds now since the space~$\XXXX$ is compact), it suffices to establish that, for any probability measures $\lambda, \lambda'$ on~$\XXXX$, we have
\begin{equation}\label{mmix}
\|\PPPP_k^* \lambda - \PPPP_k^* \lambda'\|_L^* \le C e^{-\gamma k} \quad \forall\, k\ge0, 
\end{equation}
where $\PPPP_k^*:\PP(\XXXX)\to \PP(\XXXX)$ denote the Markov operators, corresponding to the kernels~$P_k(U;\Gamma)$. To establish~\eqref{mmix}, we use the method of Kantorovich functional (suggested in \cite{KPS-cmp2002,kuksin-ams2002,kuksin2006} in the context of randomly forced PDEs), following its presentation in \cite[Section~3.1.1]{KS-book}. Let $F:\XXXX\times\XXXX\to\R_+$ be a measurable symmetric function minorised by~$\dd_\XXXX$. We define the {\it Kantorovich functional associated with~$F$\/} by the formula
$$
K_F:\PP(\XXXX)\times \PP(\XXXX)\to\R_+, \quad 
K_F(\mu,\mu')=\inf_\MMM\int_{\XXXX\times\XXXX}F(V,V')\,\MMM(\dd V,\dd V'), 
$$
where the infimum is taken over all  couplings $\MMM $  of $\mu$ and $\mu'$, i.e., over all measures $\MMM \in\PP(\XXXX\times\XXXX)$  whose marginals coincide with~$\mu$ and~$\mu'$. In view of Theorem~3.1.1 of~\cite{KS-book}, relation~\eqref{mmix} (and so also~\eqref{mixing-RDS}) will be established if we find a function~$F$ satisfying the above properties, a   number $\varkappa\in(0,1)$, and an integer $s\ge1$ such that 
\begin{equation}\label{contraction}
	K_F(\PPPP_s^*\lambda,\PPPP_s^*\lambda')\le \varkappa K_F(\lambda,\lambda')
	\quad\mbox{for any $\lambda,\lambda'\in\PP(\XXXX)$}.
\end{equation}

To prove~\eqref{contraction}, it suffices to find a coupling $(\varPhi_s^\lambda,\varPhi_s^{\lambda'})$ for the pair of  measures $(\PPPP_s^*\lambda,\PPPP_s^*\lambda')$ such that 
\begin{equation}\label{new}
	\E\,F(\varPhi_s^\lambda,\varPhi_s^{\lambda'})\le \varkappa K_F(\lambda,\lambda'),
\end{equation}
since the left-hand side of~\eqref{contraction} is bounded by $\E\,F(\varPhi_s^\lambda,\varPhi_s^{\lambda'})$. The following construction of a coupling satisfying this property was suggested in the references mentioned above. 

Suppose  that, for some complete probability space $(\Omega,\FF,\IP)$, we have constructed measurable maps $\RR,\RR':\XXXX\times\XXXX\times\Omega\to\XXXX$ such that $\RR=\RR(U,U';\omega)$ and $\RR'=\RR^\prime(U,U';\omega)$ form a coupling for $(P_1(U;\cdot),P_1(U';\cdot))$ for any $U,U'\in\XXXX$, i.e.,
\begin{equation}\label{couple}
	\DD(\RR)=P_1(U;\cdot), \quad 
	\DD(\RR')=P_1(U';\cdot).
\end{equation}
Then, denoting by~$(\widehat\Omega,\widehat\FF,\widehat\IP,\FF_k)$ a suitable probability space as in Section~\ref{s-description}, constructed from the space $(\Omega,\FF,\IP)$ above and formed of points $\widehat\omega=(\omega^0,\omega^1,\dots)$ with $\omega^j\in\Omega$, and setting $\widehat\omega^k=(\omega^0,\dots,\omega^k)$, we define iterations of~$(\RR,\RR')$ by the formulas\footnote{When we write a random variable $\eta(\widehat\omega)$ as $\eta(\widehat\omega^k)$, it means that~$\eta$ depends only on the components~$\widehat\omega^k$ of a point $\widehat\omega\in\widehat\Omega$.}
\begin{equation}\label{Phi-Phi'}
\left.
\begin{aligned}
\varPhi_0(U,U';\omega^0)&=U, \quad \varPhi_0'(U,U';\omega^0)=U', \\
\varPhi_k(U,U';\widehat\omega^k)&=\RR\bigl(\varPhi_{k-1}(U,U';\widehat\omega^{k-1}),\varPhi_{k-1}'(U,U';\widehat\omega^{k-1}),\omega^k\bigr),\\
\varPhi_k'(U,U';\widehat\omega^k)&=\RR'\bigl(\varPhi_{k-1}(U,U';\widehat\omega^{k-1}),\varPhi_{k-1}'(U,U';\widehat\omega^{k-1}),\omega^k\bigr). 
\end{aligned}
\right\}	
\end{equation}
The construction readily implies that 
\begin{equation}\label{laws}
\begin{aligned}
\DD\bigl(\{\varPhi_k(U,U')\}_{k\ge0}\bigr) &= \DD\bigl(\{U_k(U)\}_{k\ge0}\bigr),\\
\DD\bigl(\{\varPhi_k'(U,U')\}_{k\ge0}\bigr) &= \DD\bigl(\{U_k(U')\}_{k\ge0}\bigr);
\end{aligned}
\end{equation}
we recall that $\{U_k(V),k\ge0\}$ is a solution of~\eqref{RDS}, \eqref{IC}. Besides, it is easy to see that 
\begin{equation}\label{Mark}
\begin{gathered}
\text{\sl the family $\bigl\{\bigl(\varPhi_k(U,U')), \varPhi'_k(U,U')\bigr), k\ge0 \bigr\}$ is a Markov}\\ 
\text{\sl  process in~$\XXXX\times \XXXX$ with respect to the filtration~$\{\FF_k\}$}.
\end{gathered}
\end{equation}
It is shown in~\cite{KS-book} that if the random variables~$\RR$ and~$\RR'$ satisfy certain properties, then for~$\lambda=\delta_U$ and $\lambda'=\delta_{U'}$ relation~\eqref{new} with $\varPhi_s^\lambda=\varPhi_s(U,U')$, $\varPhi_s^{\lambda'}=\varPhi_s'(U,U')$ holds for some $\varkappa<1$, and that this implies the validity of~\eqref{new} for any measures~$\lambda$ and~$\lambda'$, with suitable $\varPhi_s^\lambda$ and $\varPhi_s^{\lambda'}$ constructed from~$\varPhi_s$ and~$\varPhi_s'$. 

In accordance with this programme, we now construct a pair of maps~$\RR$, $\RR'$ and show that they possess the required properties. Recall that the sets~$D_\delta$ were defined by~\eqref{D}. In the following two propositions, we assume that the hypotheses of Theorem~\ref{t-mixing-kantorovich} are fulfilled. 

\begin{proposition}\label{p-constructionRR}
	There is a complete probability space $(\Omega,\FF,\IP)$, measurable maps $\RR,\RR':\XXXX\times\XXXX\times\Omega\to\XXXX$, and positive numbers~$\theta$, $N$, and $q<1$ satisfying the inequality
\begin{equation}\label{const}
N\theta<1-q,	
\end{equation}
such that~\eqref{couple} holds for any $(U,U')\in \XXXX\times\XXXX$, and the following properties are fulfilled.
\begin{description}	
	\item [\hypertarget{independence}{Independence}.] For $(U,U')\in  D_\theta^c$, the random variables $\RR(U,U')$ and $\RR'(U,U')$ are independent.
	\item [\hypertarget{squeezing}{Squeezing}.] 	
For $(U,U')\in  D_\theta$, we have 
\begin{equation}\label{P-RR}
	\IP\bigl\{\dd_\XXXX\bigl((\RR(U,U'),\RR'(U,U')\bigr)\le q\,\dd_\XXXX(U,U')\bigr\}
	\ge 1-N\dd_\XXXX(U,U').
\end{equation}
\end{description}
\end{proposition}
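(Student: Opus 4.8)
\emph{Overview.} The plan is to define $(\RR,\RR')$ by cases according to whether $(U,U')$ lies in $D_\theta$ or in $D_\theta^c$, fixing the threshold $\theta\in(0,\delta]$ only at the end, after the constant $N$ has been produced. On $D_\theta^c$ there is nothing to contract, so I let $\RR(U,U';\cdot)$ and $\RR'(U,U';\cdot)$ be \emph{independent} samples of $P_1(U;\cdot)$ and $P_1(U';\cdot)$; a standard measurable form of the coupling lemma (see \cite[Section~1.2]{KS-book}) realises this on a single space $(\Omega,\FF,\IP)$ and gives the \hyperlink{independence}{Independence} property at once. (Hypothesis~\hyperlink{GCP}{(GCP)} is not used in this proposition; it enters only the global recurrence part of the proof of Theorem~\ref{t-mixing-kantorovich}.) All the work is on $D_\theta$, where the goal is to couple the driving noises $\zeta\sim\PPP(U;\cdot)$ and $\zeta'\sim\PPP(U';\cdot)$ so that, off an event of probability at most $N\,\dd_\XXXX(U,U')$, the exact shift relation $\zeta'=\zeta+\varPhi(U,U',\zeta)$ holds. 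Putting then $\RR=\SSS(U,\zeta)$ and $\RR'=\SSS(U',\zeta')$ yields~\eqref{couple} for free, while on the shift event the bound $\dd_\XXXX(\RR,\RR')\le q\,\dd_\XXXX(U,U')$ is precisely~\eqref{Phi-squeezing}, with $q$ the constant from~\hyperlink{LAC}{(LAC)}; this is the \hyperlink{squeezing}{Squeezing} estimate~\eqref{P-RR}.

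\emph{Two-stage coupling on $D_\theta$.} Since $\varPhi$ takes values in the finite-dimensional subspace $F$, the shift changes only the $F$-component, so I couple the $F^\dagger$- and $F$-parts in turn using the disintegration~\eqref{decomposition-E-E}. First I draw $(b,b')$ from a maximal coupling of $\PPP_{F^\dagger}(U;\cdot)$ and $\PPP_{F^\dagger}(U';\cdot)$; as projection does not increase the total variation, \hyperlink{DN}{(DN)} gives $\IP\{b\ne b'\}\le\|\PPP_{F^\dagger}(U;\cdot)-\PPP_{F^\dagger}(U';\cdot)\|_{\mathrm{var}}\le C\,\dd_\XXXX(U,U')$, the marginals of $b,b'$ being exactly $\PPP_{F^\dagger}(U;\cdot)$ and $\PPP_{F^\dagger}(U';\cdot)$. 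Conditionally on $(b,b')$ I then draw $(\zeta_F,\zeta_F')$ from a coupling of $\PPP_F(U,b;\cdot)$ and $\PPP_F(U',b';\cdot)$ and set $\zeta=(b,\zeta_F)$, $\zeta'=(b',\zeta_F')$; because the $F^\dagger$-marginals are correct and the conditional $F$-laws are the prescribed ones, $\zeta\sim\PPP(U;\cdot)$ and $\zeta'\sim\PPP(U';\cdot)$ no matter how the $F$-parts are joined. On $\{b=b'=:a\}$ I choose this $F$-coupling to be a \emph{shift coupling}: with $T_a(\xi):=\xi+\varPhi(U,U',(a,\xi))$ I draw $\zeta_F\sim\PPP_F(U,a;\cdot)$, regard $T_a(\zeta_F)$ as a sample of $\hat\mu_a:=(T_a)_*\PPP_F(U,a;\cdot)$, and maximally couple it to a variable $\zeta_F'\sim\nu_a:=\PPP_F(U',a;\cdot)$, so that $\zeta_F'=T_a(\zeta_F)$ off an event of probability $\|\hat\mu_a-\nu_a\|_{\mathrm{var}}$; on $\{b\ne b'\}$ any coupling will do. The shift event then contains $\{b=b'\}\cap\{\zeta_F'=T_a(\zeta_F)\}$, whose complement has probability at most $C\,\dd_\XXXX(U,U')+\sup_a\|\hat\mu_a-\nu_a\|_{\mathrm{var}}$.

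\emph{The key estimate.} Everything reduces to showing $\sup_a\|\hat\mu_a-\nu_a\|_{\mathrm{var}}\le N'\,\dd_\XXXX(U,U')$, which I split as $\|\hat\mu_a-\mu_a\|_{\mathrm{var}}+\|\mu_a-\nu_a\|_{\mathrm{var}}$ with $\mu_a:=\PPP_F(U,a;\cdot)$. For the second term, \eqref{Lip} bounds $|p_F(U,a,\xi)-p_F(U',a,\xi)|$ by $M\,\dd_\XXXX(U,U')$ pointwise, and integrating against $\ell_F$ over the compact set $\mathsf P_F(\KK)\subset F$, which supports both densities, gives $\|\mu_a-\nu_a\|_{\mathrm{var}}\le|\mathsf P_F(\KK)|\,M\,\dd_\XXXX(U,U')$. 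For the first term, \eqref{Phi-bound} shows that $T_a=\mathrm{Id}+\varPhi(U,U',(a,\cdot))$ is a Lipschitz perturbation of the identity with $\sup\|\varPhi\|_E\le C_*\dd_\XXXX(U,U')$ and $\Lip_\xi(\varPhi)\le C_*\dd_\XXXX(U,U')$; imposing $C_*\theta\le 1/2$ makes $T_a$ a bi-Lipschitz homeomorphism of the finite-dimensional space $F$ whose Jacobian satisfies $|\det DT_a|=1+O(\dd_\XXXX(U,U'))$ almost everywhere (Rademacher's theorem and the area formula). The density of $\hat\mu_a$ is then $p_F(U,a,T_a^{-1}(\cdot))\,|\det DT_a^{-1}(\cdot)|$, and comparing it with $p_F(U,a,\cdot)$ by means of the uniform Lipschitz continuity and boundedness of $p_F$ on the relevant compact set, the estimate $\|T_a^{-1}-\mathrm{Id}\|\le C_*\dd_\XXXX(U,U')$, and the Jacobian bound, yields $\|\hat\mu_a-\mu_a\|_{\mathrm{var}}\le N''\,\dd_\XXXX(U,U')$ uniformly in $a$. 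Collecting constants produces the failure bound~\eqref{P-RR} with an $N$ depending only on $C$, $M$, $C_*$, $\dim F$, $|\mathsf P_F(\KK)|$, and $\sup|p_F|$, and in particular \emph{not} on $\theta$.

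\emph{Constants, measurability, and the main obstacle.} Since $N$, $C_*$, and $q$ do not depend on $\theta$, I first impose $\theta\le\delta$ and $C_*\theta\le 1/2$ (so that $\varPhi$ is defined and $T_a$ is a homeomorphism with the stated Jacobian control), and then shrink $\theta$ so that $N\theta<1-q$, which is~\eqref{const}. Joint measurability of $(U,U')\mapsto(\RR,\RR')$ on one space follows from the measurable maximal-coupling construction applied to the (continuous, hence measurable) data $\PPP_{F^\dagger}(U;\cdot)$, $p_F$, and $\varPhi$, glued across the closed set $D_\theta$ with the independent coupling on $D_\theta^c$; see \cite[Section~1.2]{KS-book}. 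I expect the main obstacle to be the bound $\|(T_a)_*\PPP_F(U,a;\cdot)-\PPP_F(U,a;\cdot)\|_{\mathrm{var}}\le N''\dd_\XXXX(U,U')$: because \hyperlink{LAC}{(LAC)} gives only Lipschitz, not $C^1$, dependence of $\varPhi$ on $\xi$, the change of variables has to be justified through the area formula for bi-Lipschitz maps, and the resulting pointwise density comparison must be upgraded to an $L^1$ bound using the compactness of $\mathsf P_F(\KK)$ together with the uniform boundedness and Lipschitz continuity of $p_F$; keeping both marginals exactly correct while routing almost all of the mass through the single shift event is the accompanying bookkeeping difficulty.
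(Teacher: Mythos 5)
Your proof is correct and rests on the same two pillars as the paper's: the shift coupling $\zeta\mapsto\zeta+\varPhi(U,U',\zeta)$ supplied by \hyperlink{LAC}{(LAC)}, and a total-variation estimate for the effect of this shift obtained by a change of variables in the finite-dimensional component~$F$ using the density from \hyperlink{DLP}{(DLP)}. The difference is only in how the coupling is realized. The paper works with the full measures on~$E$: it forms $\varPsi_*(U,U',\PPP(U;\cdot))$ with $\varPsi=\mathrm{Id}+\varPhi$, proves $\|\PPP(U;\cdot)-\varPsi_*(U,U',\PPP(U;\cdot))\|_{\mathrm{var}}\le C_1\,\dd_\XXXX(U,U')$ once and for all (this is Proposition~\ref{p-image-measure} in the Appendix, whose proof is exactly your disintegration-plus-Jacobian argument), takes a maximal coupling of $\varPsi_*(U,U',\PPP(U;\cdot))$ with $\PPP(U';\cdot)$, and invokes the measurable gluing lemma to produce a triplet $(\tilde\zeta,\xi,\tilde\zeta')$ in which $\xi=\varPsi(U,U',\tilde\zeta)$ holds almost surely while $(\xi,\tilde\zeta')$ is maximal. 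You instead build the coupling hierarchically --- maximal coupling of the $F^\dagger$-marginals first, then a conditional shift coupling on~$F$ --- which dispenses with the gluing lemma at the price of tracking two failure events instead of one; the resulting bound $Cd+N'd$ and your observation that $N$ is independent of~$\theta$ (so that~\eqref{const} can be arranged by shrinking~$\theta$ at the end) match the paper exactly. One small point you should make explicit: $\varPhi(U,U',(a,\cdot))$ is defined only on the relevant slice of~$\KK$, so before declaring $T_a$ a bi-Lipschitz homeomorphism of all of~$F$ and applying the area formula you must extend it to~$F$ with the same Lipschitz constant (the paper uses the Kirszbraun--McShane theorem for this); with that extension in place your Jacobian and density comparisons go through as stated.
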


As was mentioned above, iterations of~$\RR$ and~$\RR'$ allows one to construct a Markov process~$(\varPhi_k,\varPhi_k')$ on a suitable filtered probability space $(\widehat\Omega,\widehat\FF,\widehat\IP,\FF_k)$. The following result establishes two key properties of that process. 

\begin{proposition}\label{p-key-estimates}
There is a number $\theta>0$ satisfying~\eqref{const} such that, for any $(U,U')\in D_\theta$, the following inequality holds for the Markov process~\eqref{Mark}{\rm:}
\begin{equation}\label{infinite-coupling}
	\widehat\IP\bigl\{\dd_\XXXX\bigl(\varPhi_k(U,U'),\varPhi_k'(U,U')\bigr)
	\le q^k\dd_\XXXX(U,U')\mbox{ for all  $k\ge0$}\bigr)\bigr\}
	\ge 1-N_1\,\dd_\XXXX(U,U'),
\end{equation}
where $N_1=N(1-q)^{-1}$. Besides, for any $\delta>0$, there exist a number $p>0$ and an integer $l\ge1$ such that, for any $(U,U')\in\XXXX\times\XXXX$, 
\begin{equation}\label{to-coupling}
	\widehat\IP\bigl\{\dd_\XXXX\bigl(\varPhi_l(U,U'),\varPhi_l'(U,U')\bigr)
	\le\delta\bigr\}\ge p.
\end{equation}
\end{proposition}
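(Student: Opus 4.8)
The plan is to deduce both estimates from the one-step coupling $(\RR,\RR')$ furnished by Proposition~\ref{p-constructionRR}, exploiting the Markov structure~\eqref{Mark} of the iterated process $(\varPhi_k,\varPhi_k')$. I would treat the two assertions separately, since they rely on the two different properties (squeezing and approximate controllability) built into $(\RR,\RR')$.

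For the infinite-time contraction~\eqref{infinite-coupling}, I would fix $(U,U')\in D_\theta$ and introduce the events $A_k=\{\dd_\XXXX(\varPhi_k,\varPhi_k')\le q^k\dd_\XXXX(U,U')\}$, aiming to bound $\widehat\IP(\bigcap_{k\ge0}A_k)$ from below. The key observation is that on $A_{k-1}$ the pair $(\varPhi_{k-1},\varPhi_{k-1}')$ again lies in $D_\theta$ (since $q^{k-1}\dd_\XXXX(U,U')\le\dd_\XXXX(U,U')\le\theta$), so the \hyperlink{squeezing}{Squeezing} bound~\eqref{P-RR} applies at step $k$. Using the Markov property~\eqref{Mark} and~\eqref{P-RR} conditionally, the probability that the squeezing fails at step $k$, given $A_0\cap\cdots\cap A_{k-1}$, is at most $N\,\dd_\XXXX(\varPhi_{k-1},\varPhi_{k-1}')\le Nq^{k-1}\dd_\XXXX(U,U')$. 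A union bound over $k\ge1$ then gives
\begin{equation*}
\widehat\IP\Bigl(\bigcap_{k\ge0}A_k\Bigr)\ge 1-\sum_{k\ge1}Nq^{k-1}\dd_\XXXX(U,U')
=1-\frac{N}{1-q}\,\dd_\XXXX(U,U'),
\end{equation*}
which is exactly~\eqref{infinite-coupling} with $N_1=N(1-q)^{-1}$. Making this rigorous requires a careful conditional estimate along the filtration $\{\FF_k\}$; I would write $\widehat\IP(\bigcap_k A_k)=\lim_m\widehat\IP(\bigcap_{k\le m}A_k)$ and prove the telescoping bound $\widehat\IP(\bigcap_{k\le m}A_k)\ge\widehat\IP(\bigcap_{k\le m-1}A_k)-Nq^{m-1}\dd_\XXXX(U,U')$ by conditioning on $\FF_{m-1}$ and invoking~\eqref{P-RR} on the event $A_{m-1}\in\FF_{m-1}$.

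For the hitting estimate~\eqref{to-coupling}, the mechanism is different: starting from an arbitrary pair $(U,U')$ (possibly far apart), I must drive the two copies to within distance $\delta$ with positive probability in a bounded number $l$ of steps. Here I would use \hyperlink{GCP}{(GCP)}: there is an $m$ and a common target $\widehat U$ such that from any initial point one can steer the system $\e$-close to $\widehat U$ using controls in the support of the noise. The plan is to run $\RR$ and $\RR'$ for $m$ steps each along these (possibly different) control sequences; by the nondegeneracy encoded in \hyperlink{DN}{(DN)} and the density property~\eqref{Q-F-density}, each prescribed control neighbourhood carries positive probability, so with probability at least some $p_0>0$ both trajectories land within $\e$ of $\widehat U$, hence within $2\e$ of each other. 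Choosing $\e$ so that $2\e\le\delta$ (and possibly appending one squeezing step) yields~\eqref{to-coupling} with $l=m$ and $p=p_0$. The main obstacle I anticipate is the lower bound on the probability of following a prescribed control tube simultaneously for both marginals: the coupling $(\RR,\RR')$ is designed around squeezing and independence, not controllability, so I would need to verify that on $D_\theta^c$ the \hyperlink{independence}{Independence} property lets me treat the two copies as independent and bound each factor below using the positivity of $\PPP(U;\cdot)$ on $\supp\PPP(U;\cdot)$, while being careful about the uniformity of these lower bounds over the compact space $\XXXX$ (which should follow from continuity in \hyperlink{DN}{(DN)} and compactness of $\XXXX$ and $\KK$).
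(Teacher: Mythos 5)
Your argument for~\eqref{infinite-coupling} is correct and is essentially the paper's: an induction along the filtration $\{\FF_k\}$, using the Markov property~\eqref{Mark} and the \hyperlink{squeezing}{Squeezing} bound~\eqref{P-RR} on the event that all previous steps contracted (which keeps the pair inside $D_\theta$), followed by summing the geometric series $N\sum_l q^l d$. The only cosmetic difference is that the paper phrases the events as one-step contractions $G_k=\{\dd_\XXXX(\varPhi_k,\varPhi_k')\le q\,\dd_\XXXX(\varPhi_{k-1},\varPhi_{k-1}')\}$ and works with $\bigcap_{l\le k}G_l$, whereas you use the cumulative events $A_k$ directly; the two are interchangeable here.

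For~\eqref{to-coupling} there is a genuine gap, and it is precisely at the point you flag as ``the main obstacle'' without resolving it. You want to run both marginals for $m$ steps and multiply the two lower bounds coming from \hyperlink{GCP}{(GCP)}, invoking the \hyperlink{independence}{Independence} property on $D_\theta^c$. But Independence is a property of the \emph{one-step} kernel at the current pair: even if $(U,U')\in D_\theta^c$ initially, the pair $(\varPhi_k,\varPhi_k')$ may enter $D_\theta$ at some intermediate time $k<m$, after which $\RR$ and $\RR'$ are produced by a maximal-coupling construction and are strongly correlated, so the product bound for ``both copies land near $\widehat U$ at time $m$'' is no longer justified. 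The paper's resolution is a dichotomy implemented via the stopping time $\tau=\min\{k\ge0:\dd_\XXXX(\varPhi_k,\varPhi_k')\le\theta\}$ and an auxiliary process $(\hat\varPhi_k,\hat\varPhi_k')$ in which the noises are \emph{always} taken independent: the two processes coincide up to $\tau\wedge(m+1)$; for the auxiliary process genuine independence plus the single-trajectory estimate~\eqref{transition-U-Udelta} (obtained from \hyperlink{GCP}{(GCP)}, the Feller property, lower semicontinuity and compactness of $\XXXX$ --- the uniformity point you correctly anticipate) gives $\widehat\IP\{\hat\tau^m\le m\}\ge(p_{\theta/2})^2$; hence $\tau\le m$ with at least that probability, and on that event the strong Markov property together with~\eqref{infinite-coupling} (after shrinking $\theta$ so that $N_1\theta\le\tfrac12$) forces the distance to contract below $\delta$ within a further fixed number of steps with conditional probability at least $\tfrac12$. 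Without this stopping-time comparison (or an equivalent device), your probability lower bound $p_0$ for the simultaneous event does not follow from the stated properties of $(\RR,\RR')$.
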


Once these two propositions are established, direct application of the results in~\cite[Section~3.2.3]{KS-book} will imply the validity of~\eqref{new} and prove Theorem~\ref{t-mixing-kantorovich}. 

\begin{proof}[Proof of Proposition~\ref{p-constructionRR}] 
In what follows, we denote  $d:= \dd_\XXXX(U,U')$. The proof is divided into three steps.

\smallskip
{\it Step~1: Estimate for a total variation distance\/}. Let~$\varPhi$ be the map entering Hypothesis~\hyperlink{LAC}{(LAC)} and let 
$$
\varPsi(U,U',\xi)=\xi+\varPhi(U,U',\xi).
$$ 
We claim that there is $\theta\in(0,\delta)$ such that, for any $(U,U')\in D_\theta$, 
\begin{equation}\label{TV-estimate}
\Delta(U,U'):=
	\bigl\|\PPP(U;\cdot)-\varPsi_*(U,U',\PPP(U;\cdot))\bigr\|_{\mathrm{var}}
	\le C_1d. 
\end{equation}
Here and below  $C_i$, $i=1,2,\dots$, stand for some positive numbers not depending on~$U$ and~$U'$. 

To prove~\eqref{TV-estimate}, note that, for any $(U,U')\in D_\delta$, the map $\varPhi(U,U',\cdot):\KK\to E$ satisfies the hypotheses mentioned in the beginning of Section~\ref{s-transformation}. In particular, in view of~\eqref{Phi-bound} and~\eqref{Phi-squeezing}, inequalities~\eqref{lipschitz-property} hold with $\varkappa=C_*d$. Applying Proposition~\ref{p-image-measure}, we see that $\varPhi(U,U',\cdot)$ must satisfy inequality~\eqref{estimate-image}, which coincides with~\eqref{TV-estimate}. 

\smallskip
{\it Step~2: Construction of~$\RR$ and~$\RR'$}. Let us fix any $(U,U')\in D_\theta$ and consider  two pairs of random variables in $E$, 
$\bigl(\zeta^U,\varPsi(U,U',\zeta^U)\bigr)$ and $\bigl(\varPsi(U,U',\zeta^U),\zeta^{U'}\bigr)$, where $\DD(\zeta^U)=\PP (U, \cdot)$ and  $\DD(\zeta^{U'}) = \PP (U', \cdot)$. We denote by~$\lambda(U,U')$ the law of the first pair and define~$\lambda'(U,U')$ to be the law of a maximal coupling for the second pair, i.e., for the pair
$$
\bigl(\,\DD(\varPsi(U,U',\zeta^U))=\varPsi_*(U,U',\PPP(U;\cdot)), \ \DD(\zeta^{U'})=\PPP(U';\cdot)\,\bigr).
$$ 
In view of Theorem~1.2.28 in~\cite{KS-book}, we can assume that~$\lambda$ and~$\lambda'$ are transition probabilities from~$D_\theta$ to $E\times E$. Then $\Pi_{2*}\lambda(U,U')=\Pi_{1*}\lambda'(U,U')$ for any $(U,U')\in D_\theta$, where $\Pi_i:E\times E\to E$, $i=1,2$ denotes the natural projection from the direct product to its $i^\text{\rm{th}}$ component. In view of the measurable version of the gluing lemma (see~\cite[Chapter~1]{villani2009} and~\cite[Corollary~3.7]{BM-2020}), there exists a triplet of $E$-valued random variables $\tilde\zeta(U,U')$, $\xi(U,U')$, $\tilde\zeta'(U,U')$, defined on the same probability space, such that the maps $\tilde\zeta,\xi,\tilde\zeta':D_\theta\times\Omega\to E$ are measurable, and for any $(U,U')\in D_\theta$, we have 
\begin{equation*}\label{noise-coupling}
	\DD\bigl(\tilde\zeta(U,U'),\xi(U,U')\bigr)=\lambda(U,U'), \quad 
	\DD\bigl(\xi(U,U'),\tilde\zeta'(U,U')\bigr)=\lambda'(U,U'). 
\end{equation*}
Enlarging,  if necessary, the probability space, we extend $\tilde\zeta(U,U')$ and $\tilde\zeta'(U,U')$ from $(U,U')\in D_\theta$  to random fields on $\XXXX\times \XXXX$ in such a way that 
\begin{equation}\label{exten}
\text{$\tilde\zeta(U,U')$ and $\tilde\zeta'(U,U') $ are independent if $(U,U')\in D_\theta^c$},	
\end{equation}
and their laws are $\PPP(U; \cdot)$ and $\PPP(U';\cdot)$,  respectively. Finally, for any  $U,U'\in\XXXX$, we set 
\begin{equation}\label{RR-definition}
	\RR(U,U',\omega)=\SSS\bigl(U,\tilde\zeta(U,U')\bigr), \quad
	\RR'(U,U',\omega)=\SSS\bigl(U',\tilde\zeta'(U,U')\bigr).
\end{equation}
This implies, in particular, that $(\RR,\RR')$ is a coupling for the measures~$P_1(U;\cdot)$ and~$P_1(U';\cdot)$.

\smallskip
{\it Step~3: Properties of~$\RR$ and~$\RR'$\/}. We claim that the maps defined by~\eqref{RR-definition} satisfy the properties of \hyperlink{independence}{Independence} and \hyperlink{squeezing}{Squeezing}. Indeed, the independence of  $\RR$ and~$\RR'$  for $(U,U')\in D_\theta^c$ follows from~\eqref{exten}, so we only need to establish~\eqref{P-RR} for $(U,U')\in D_\theta$.

Since~$\DD(\xi,\tilde\zeta')=\lambda'(U,U')$ is the law of a maximal coupling for $\DD(\varPsi(U,U',\zeta^U))$ and~$\DD(\zeta^{U'})$, it follows from~\eqref{TV-estimate} and~\eqref{lipcshitz-transition} that 
\begin{align}
&\IP\{\xi(U,U')\ne\tilde\zeta'(U,U')\}
= \bigl\|\DD(\varPsi(U,U',\zeta^U))-\DD(\zeta^{U'})\bigr\|_{\mathrm{var}}\notag\\
&\qquad\le \bigl\|\DD(\varPsi(U,U',\zeta^U))-\DD(\zeta^U)\bigr\|_{\mathrm{var}}
+\bigl\|\DD(\zeta^U)-\DD(\zeta^{U'})\bigr\|_{\mathrm{var}}\le C_1d.\label{maximal-coupling}
\end{align}
On the other hand, since the law of $(\tilde\zeta,\xi)$ coincides with that of $(\zeta^U,\varPsi(U,U',\zeta^U))$, the definition of~$\varPsi$ and inequality~\eqref{Phi-squeezing} imply that, with probability~$1$, 
$$
\dd_\XXXX\bigl(\SSS(U,\tilde\zeta(U,U')),\SSS(U',\xi(U,U'))\bigr) \le q\,{d}. 
$$
Therefore, if $\omega\in\Omega$ is such that  $\xi(U,U')=\tilde\zeta'(U,U')$, then 
$$
\dd_\XXXX\bigl(\SSS(U,\tilde\zeta(U,U')),\SSS(U',\tilde\zeta'(U,U'))\bigr) \le q\,{d}.
$$
Combining this with~\eqref{maximal-coupling} and~\eqref{RR-definition}, we arrive at~\eqref{P-RR} with $N=C_1$. 
\end{proof}

\begin{proof}[Proof of Proposition~\ref{p-key-estimates}]
{\it Step~1: Proof of~\eqref{infinite-coupling}\/}. Given an integer $k\ge1$, we define the event 
$$
G_k=\Bigl\{\dd_\XXXX\bigl(\varPhi_k(U,U'),\varPhi_k'(U,U')\bigr)\le q\,\dd_\XXXX\bigl(\varPhi_{k-1}(U,U'),\varPhi_{k-1}'(U,U')\bigr)\Bigr\}.
$$
We claim that, for any $(U,U')\in D_\theta$, 
\begin{equation}\label{PGk}
	\widehat\IP\bigl(\,\widetilde  G_k\bigr)
	\ge 1-N{d}\sum_{l=0}^{k-1}q^l=:\delta_k, \quad \text{where}\quad 
	 \widetilde  G_k = \bigcap_{l=1}^k G_l.
\end{equation}
If this inequality is proved, then the probability of the intersection~$\widetilde G$ of all $G_k$ can be minorised by 
$$
1-Nd\sum_{l=0}^\infty q^l=1-N_1 d,
$$ 
and the required inequality~\eqref{infinite-coupling} will follow from the observation that~$\widetilde G$ is a subset of  the event in the left-hand side of~\eqref{infinite-coupling}. 

To prove~\eqref{PGk}, we  argue by induction. For $k=1$, inequality~\eqref{PGk} coincides with~\eqref{P-RR}. Assuming that inequality~\eqref{PGk} holds for $k=m$, we now prove it for $k=m+1$. 

 By  \eqref{Mark} and  the Markov property, we have 
$$
\widehat\IP\bigl\{G_{m+1}\,|\,\FF_m\bigr\}=\widehat\IP\Bigl\{\dd_\XXXX\bigl(\RR(V,V'),\RR'(V,V')\bigr
)\le q\,\dd_\XXXX(V,V')\Bigr\},
$$
where $V=\varPhi_m(U,U')$ and $V'=\varPhi_m'(U,U')$. Now note that if $\widehat\omega\in \widetilde G_m$, then  $\dd_\XXXX(\varPhi_m(U,U'),\varPhi_m'(U,U'))\le q^m{d}\le \theta$, so by \eqref{P-RR}  
 the probability on the right-hand side is minorised by $1-Nq^m{d}$. Using the induction hypothesis, we thus obtain 
\begin{align*}
	\widehat\IP\bigl(\widetilde G_{m+1}\bigr)&=\widehat\E\Bigl({\mathbf1}_{\widetilde G_m}\,\widehat\IP\bigl\{G_{m+1}\,|\,\FF_m\bigr\}\Bigr)
	\ge \bigl(1-Nq^m{d}\bigr)\,\widehat\IP\bigl(\widetilde G_m\bigr)\\
	&\ge \bigl(1-Nq^m{d}\bigr)\,\delta_m>\delta_{m+1}. 
\end{align*}
This completes the induction step and the proof of~\eqref{infinite-coupling}. 

\smallskip
{\it Step~2: Transition to a ball of small radius\/}. Let us note that all our constructions and conclusions above remain true if we replace the number~$\theta$, defining the set~$D_\theta^c$ (cf.~\eqref{const}), with any smaller positive constant, depending only on the system~\eqref{RDS}. Thus, without loss of generality, we shall assume in what follows that 
\begin{equation}\label{modif}
	\theta\le\frac{1}{2N_1},
\end{equation}
where $N_1>0$ is the number in~\eqref{infinite-coupling}. In this case, the right-hand side of~\eqref{infinite-coupling} is minorised by~$1/2$ if $d=\dd_\XXXX(U,U')\le \theta$.

As a first step in the proof of~\eqref{to-coupling}, we establish a simpler result concerning single trajectories $\{U_k,k\ge0\}$. Namely, we claim that, for any $\delta>0$, there is $U_\delta\in\XXXX$, a number $p_\delta>0$, and an integer $m_\delta\ge1$ such that 
\begin{equation}\label{transition-U-Udelta}
	\widehat\IP\bigl\{U_{m_\delta}\in B_\XXXX(U_\delta,\delta)\bigr\}\ge p_\delta\quad\mbox{for any initial point $U\in\XXXX$}.
\end{equation}
Indeed, in view of Hypothesis~\hyperlink{GCP}{(GCP)}, in which we take $\e=\delta/2$, there is a point $\widehat U\in\XXXX$ and an integer $m\ge1$ such that, for any $U\in\XXXX$ and some appropriately chosen vectors $\xi_k\in E$, $1\le k\le m$ relations~\eqref{xi_k-support} hold with $\e=\delta/2$. By the continuity of~$\mathscr S$, there is $\nu>0$ such that, for any $\tilde\xi_k\in B_E(\xi_k,\nu)$, $1\le k\le m$, the corresponding trajectory of the RDS~\eqref{RDS} belongs to the ball $B_\XXXX(\widehat U,\delta)$ at time $k=m$. Since $\xi_k\in \supp\DD(\zeta_k^{U_{k-1}})$ for $1\le k\le m$, it follows that 
 $$
 \widehat\IP\bigl\{\zeta_k^{U_{k-1}}\in B_E(\xi_k,\nu)\mbox{ for }1\le k\le m \bigr\}>0. 
 $$
 We have thus proved that the probability of transition from any point $U\in\XXXX$ to the $\delta$-neighbourhood of~$\widehat U$ at time $k=m$ is positive. By the Feller property and the well-known characterisation of the weak convergence of measures, the function $U\mapsto P_m(U,B_\XXXX(\widehat U,\delta))$ is lower semicontinuous. Since it is positive everywhere, we conclude that 
 $$
 \inf_{U\in\XXXX}P_m\bigl(U,B_\XXXX(\widehat U,\delta)\bigr)>0.
 $$
 This shows that~\eqref{transition-U-Udelta} holds with $m_\delta=m$ and $U_\delta=\widehat U$. 
 
\smallskip
{\it Step~3: Proof of~\eqref{to-coupling}\/}. 
In view of~\eqref{laws}, if the processes~$\varPhi_k(U,U')$ and~$\varPhi_k'(U,U')$ were independent, then the required result would follow immediately from~\eqref{transition-U-Udelta}. However, they are not, and we have to proceed differently. In what follows, we can assume without loss of generality that~$\delta<\theta$, where $\theta>0$ is satisfies~\eqref{modif}. Let $k_*>0$ be the smallest integer such that $\theta q^{k_*} \le \delta$. We claim that~\eqref{to-coupling} holds with $l=m_{\delta/2}+k_*$, where $m_{\delta/2}$ is the integer entering~\eqref{transition-U-Udelta} in which the radius~$\delta$ of the ball is replaced with~$\delta/2$. 

To prove this, we abbreviate $\Phi_k(U, U') = \Phi_k$, $\Phi'_k(U, U') = \Phi'_k$, $m=m_{\delta/2}$. We set $\zeta_l=(\varPhi_l,\varPhi_l')\in\XXXX\times\XXXX$, $d(\zeta_l)=\dd_\XXXX(\varPhi_l,\varPhi_l')$ and define the stopping times 
$$
\tau=\tau(U,U')=\min\bigl\{k\ge0:d(\zeta_k)\le\theta\bigr\}, \quad \tau^m=\tau\wedge(m+1).
$$
Without loss of generality, we may assume that, in construction~\eqref{Phi-Phi'}, 
when $(U,U')\in D_\theta^c$ we take random variables $\RR(U,U',\omega_k)$, $\RR'(U,U',\omega_k)$, $k\ge1$, from some fixed collection of pairs of independent random fields 
$$
(\RR(\cdot,\omega_k), \RR'(\cdot,\omega_k), k=1,2,\dots).
$$ 
Then we define processes $\{\hat\varPhi_k, k\ge1\}$ and $\{\hat\varPhi_k', k\ge1\}$ by relation~\eqref{Phi-Phi'}, where always the random fields~$\RR$ and~$\RR'$ are independent and are taken from the collection above. We set $\hat\zeta_l=(\hat\varPhi_l,\hat\varPhi_l')$ and define $d(\hat\zeta_l)$, $\hat\tau$, $\hat\tau^m$ similarly to $d(\zeta_l)$, $\tau$, $\tau^m$. Note that 
$$
\tau^m=\hat\tau^m\quad\mbox{and}\quad \zeta_l=\hat\zeta_l\quad\mbox{for $l\le \tau^m$}. 
$$
In view of~\eqref{transition-U-Udelta}, 
\begin{equation}\label{Z1}
	\widehat\IP\{\hat\tau^m=m+1\}\le \widehat\IP\{d(\hat\zeta_m)>\theta\}
	\le 1-(p_{\theta/2})^2.
\end{equation}
Let us set $A_m=\{\dd_\XXXX(\varPhi_k(\zeta_{\tau^m}),\varPhi_k'(\zeta_{\tau^m}))\le q^k\theta\mbox{ for }k\ge0\}$. By the strong Markov property, applied to the process~$\{\zeta_l\}$, we have 
\begin{align}
\widehat\IP\{d(\zeta_{\tau^m+k})\le q^k\theta\mbox{ for }k\ge0\}
&=\widehat\IP(A_m)\ge \widehat\IP\bigl(\{\tau^m\le m\}\cap A_m\bigr)\notag\\
&=\E\bigl({\mathbf1}_{\{\tau^m\le m\}}\,\IP\{A_m\,|\,\FF_{\tau^m}\}\bigr).
\label{lower-bound-dz}
\end{align}
But for every~$\omega\in \{\tau^m\le m\}$, we have $\tau^m=\tau$ and $d(\zeta_{\tau^m})\le\theta$. So for every such~$\omega$ it holds that $\IP\{A_m\,|\,\FF_{\tau^m}\}\ge\frac12$ in view of~\eqref{infinite-coupling} and~\eqref{modif}. Since $\tau^m=\hat\tau^m$, then by~\eqref{Z1} the right-most term in~\eqref{lower-bound-dz} is no less than~$\frac12(p_{\theta/2})^2$. As the event in~\eqref{to-coupling} contains the event~$\{d(\zeta_{\tau^m+k})\le q^k\theta\mbox{ for }k\ge0\}$ if $l=m+1$, then~\eqref{to-coupling} is proved with $l=m_{\delta/2}+1$ and $p=\frac12(p_{\theta/2})^2$. 

\smallskip
We have thus established~\eqref{mixing-RDS}. To establish~\eqref{convergence-V-s}, we first note that it suffices to find $C_s>0$ such that
\begin{equation}\label{U-k-Lipschitz}
	\|\DD(U_0,\dots,U_s)-\DD(U_0',\dots,U_s')\|_L^*\le C_s\dd_\XXXX(U,U')
	\quad\mbox{for $U,U'\in\XXXX$},
\end{equation}
where $\{U_k\}$ and $\{U_k'\}$ stand for the trajectories of~\eqref{RDS} issued from~$U$ and~$U'$, respectively. Once this inequality is proved, \eqref{convergence-V-s} will easily follow due to the Markov property. 

To prove~\eqref{U-k-Lipschitz}, we first consider the case $s=1$. In view of~\eqref{RDS} and~\eqref{TF}, for any continuous function $f:\XXXX\times\XXXX\to\R$, we have 
$$
\bigl\langle f,\DD(U_0,U_1)\bigr\rangle
=\int_\KK f\bigl(U,\SSS(U,y)\bigr)\PPP(U;\dd y).
$$
Assuming that $f$ is $1$-Lipschitz and satisfies the inequality $\|f\|_\infty\le1$, and using the Lipschitz property of~$\SSS$ and inequality~\eqref{lipcshitz-transition}, it is easy to show that
$$
\bigl|\bigl\langle f,\DD(U_0,U_1)\bigr\rangle-\bigl\langle f,\DD(U_0',U_1')\bigr\rangle\bigr|\le C_1\,\dd_\XXXX(U,U'),
$$
where $C_1>0$ does not depend on~$f$. Taking the supremum with respect to~$f$, we arrive at~\eqref{U-k-Lipschitz} for $s=1$. A similar argument combined with induction allows one to establish~\eqref{U-k-Lipschitz} for any integer $s\ge2$. The proof of Theorem~\ref{t-mixing-kantorovich} is complete. 
\end{proof}

\section{Main results}
\label{s-MR}

\subsection{Formulation of the main theorem}
\label{s-formulation-MR}
Let us recall the setting of Subsection~\ref{ss-RDS}. As was mentioned there after Lemma~\ref{l-reduction}, we assume without loss of generality that~$H$ is a separable Hilbert space, $E$ is a separable Banach space, $S:H\times E\to H$ is a continuous map, and $X\subset H$ and $\KK\subset E$ are compact subsets such that $S(X\times\KK)\subset X$. We consider the stochastic system in~\eqref{stationary-RDS}, 
$$
u_k=S(u_{k-1}, \eta_k), \quad k\ge1,
$$
supplemented with an initial condition 
\begin{equation}\label{IC-stationary}
	u_0=u,
\end{equation}
where $u\in X$ is a given point, and~$\{\eta_k\}$ is a stationary sequence satisfying the properties mentioned in Subsection~\ref{ss-RDS}. In particular, the law of~$\eta_k$ has the compact support $\KK\subset E$, and the conditional law~$Q(\xxi;\cdot)$ satisfies~\hyperlink{Fel}{(SF)}; see Section~\ref{ss-RDS}.

\begin{definition}\label{d-mixing}
	We shall say that stochastic system~\eqref{stationary-RDS}  is {\it exponentially mixing in the dual-Lipschitz norm\/}\,\footnote{See the first footnote.}  if there is a measure $\mu\in\PP(X)$ and positive numbers~$C$ and~$\gamma$ such that, for any initial state $u\in X$, the corresponding trajectory $\{u_k\}_{k\in\Z_+}$ of~\eqref{stationary-RDS}, \eqref{IC-stationary} satisfies the inequality
	\begin{equation}\label{expoconv}
		\|\DD(u_k)-\mu\|_L^*\le C\,e^{-\gamma k}, \quad k\ge0.
	\end{equation}
\end{definition}

In what follows, we drop for short {\it in the dual-Lipschitz norm\/} and call stochastic systems as in Definition~\ref{d-mixing} just {\it exponentially mixing\/}. Let us note that if the initial state~$u_0$ is an $X$-valued random variable independent of~$\{\eta_k\}_{k\in\Z}$, then~\eqref{expoconv} remains valid. To see this, it suffices to remark that if $\lambda=\DD(u_0)$, then
\begin{equation*}\label{law-randomIC}
	\DD(u_k)=\int_X P_k(v,\cdot)\lambda(\dd v),
\end{equation*}
where $P_k(v,\cdot)$ stands for the law at time~$k$ of the trajectory of~\eqref{stationary-RDS}, \eqref{IC-stationary} with $u=v$. We recall that the laws~$\DD(u_k)$, $k\ge1$, depend not on the process~$\{\eta_k\}$, but only on its distribution. 

Below, we shall prove that, under certain restrictions, system~\eqref{stationary-RDS} is mixing, and at the end of Section~\ref{s-proofMT}, shall show that the measure~$\mu$ in~\eqref{expoconv} is stationary in the following sense: there exists a random variable~$v$, measurable with respect to~$\FF_0$, such that $\DD(v)=\mu$,  and a weak solution $\{u_k,k\ge0\}$ for~\eqref{stationary-RDS} such that $\DD(u_k)\equiv\mu$. It is obvious from~\eqref{expoconv} that~$\mu$ is the only measure with this property. 

To ensure the property of exponential mixing for~\eqref{stationary-RDS}, we assume that the map $S:H\times E\to H$ is {\it twice continuously differentiable and is  bounded on  bounded subsets together with its derivatives up to the second order\/}, and impose the following four hypotheses on the dynamics and driving noise. For an integer $k\ge1$, we denote $\mathbf{0}_k=(0,\dots,0)$, where $0\in E$ is repeated~$k$ times, and for a vector $\vec{\xi}_n=(\xi_1,\dots,\xi_n)\in\supp\DD(\eta_1,\dots,\eta_n)\subset\KK^n$, we denote
\begin{equation}\label{notat}
\mbox{$S_n(v;\vec{\xi}_n)=u_n$, where $\{u_1,\dots,u_n\}$ is a trajectory of~\eqref{stationary-RDS} with $\eta_k=\xi_k$}. 
\end{equation}

\begin{description}\sl 
	\item [\hypertarget{GD}{(GD) Global dissipation}.] There is an integer $k\ge1$ and a number $a\in(0,1)$ such that
\begin{equation*}\label{global-dissipation}
	\|S_k(u;\mathbf{0}_k)\|_H\le a\,\|u\|_H
	\quad\mbox{for any $u\in H$}. 
\end{equation*} 
\end{description}

\begin{remark}\label{r-GS}
This hypothesis implies that the unperturbed dynamics has one globally stable equilibrium. In a subsequent paper, we shall relax this condition to include stochastic perturbations of more general systems. The  result will be applicable, for instance, to a reaction-diffusion equation possessing finitely many hyperbolic equilibria and a Lyapunov function. 
\end{remark}

We shall say that a closed subspace $G\subset H$ is {\it determining\/} if there is a  number $\varkappa\in(0,1)$ such that
\begin{equation}\label{determining}
	\bigl\|(I-{\mathsf P}_G)D_uS(u,\eta)\bigr\|_{\LL(H)}
	\le \varkappa\quad\mbox{for any $u\in X$, $\eta\in \KK$},
	\end{equation} 
	where ${\mathsf P}_G:H\to H$ denote the orthogonal projection to~$G$. This definition is related to the usual concept of {\it determining modes\/} going back to Foia\c s--Prodi~\cite{FP-1967}. Indeed, assuming that $X$ is convex and applying the mean value theorem, we see that 
$$
\bigl\|(I-{\mathsf P}_G)\bigl(S(u,\eta)-S(u',\eta)\bigr)\bigr\|_H\le \varkappa\,\|u-u'\|_H.
$$
In particular, if $\{u_k\}$ and $\{u_k'\}$ are two trajectories of~\eqref{stationary-RDS} with the same driving noise such that ${\mathsf P}_Gu_k={\mathsf P}_Gu_k'$ for $k\ge1$, then the difference $u_k-u_k'$ goes to zero exponentially fast. 

\begin{remark}\label{r-determining}
If~$H$ is a finite-dimensional space, then~\eqref{determining} trivially holds with $G=H$. It also holds if the map~$S$ is smoothing in the sense that for some Hilbert space~$V$, compactly and densely embedded in~$H$, the map~$S$ is continuously differentiable from $H\times E$ to~$V$. Indeed, in this case~$H$ admits a Hilbert basis $\{e_j, j\ge1\}$ which also is an orthogonal basis of~$V$ such that the norms $\|e_j\|_V$ go to infinity with~$j$ (e.g., see~\cite[Section~2.1]{LM1972}). Then~\eqref{determining} is fulfilled  if we take for~$G$ the vector span of $\{e_j, 1\le j\le N\}$ with a sufficiently large~$N$.
\end{remark}

\begin{description}\sl 
	\item [\hypertarget{ALC}{(ALC) Approximate linearised controllability}.] There exists an open set $O\supset X\times\KK$ and a finite-dimensional determining subspace $G\subset H$ such that the closure of the image of the operator $D_\eta S(u,\eta):E\to H$ contains~$G$ for any $(u,\eta)\in O$. 
\end{description}

Recall that we denote by~$Q(\xxi; \cdot)$ the conditional law of~$\eta_1$ given the past $\{\eta_k,k\in\Z_-\}=\xxi\in\EE$ and that Assumption~\hyperlink{Fel}{(SF)} holds for it. The next hypothesis is an analogue of~\hyperlink{DLP}{(DLP)} in the current setting.  

\begin{description}\sl 
	\item [\hypertarget{DLP'}{(DLP$'$) Decomposability and Lipschitz property}.] 
	There is an increasing sequence of finite-dimensional subspaces $F_n\subset E$, with complementary subspaces~$F_n^\dagger\subset E$, and projections $\mathsf P_n:E\to F_n$, such that the union~$\cup_nF_n$ is dense in~$E$, the operator norms of~$\mathsf P_n$ are bounded uniformly in~$n\ge1$, and the following property holds for any $n\ge1$: for each $\xxi\in\EE$, the measure $Q(\xxi;\cdot)$ admits a disintegration
\begin{equation}\label{decomposition-Q}
	Q(\xxi;\dd y)=Q_n^\dagger(\xxi;\dd y_n^\dagger)Q_n(\xxi,y_n^\dagger;\dd y_n),
\end{equation}
where the decomposition $y=(y_n,y_n^\dagger)$ is associated with the direct sum $E=F_n\dotplus F_n^\dagger$, and we write $Q_n^\dagger(\xxi;\cdot)\in\PP(F_n^\dagger)$ for the image of~$Q(\xxi;\cdot)$ under the projection~$I-\mathsf P_n$ onto~$F_n^\dagger$. Moreover, there is a Lipschitz continuous function $\rho_n:\EE\times E\to\R_+$ supported by~$\EE\times\KK$ such that 
\begin{equation*}\label{Q-density}
Q_n(\xxi,y_n^\dagger;\dd y_n)=\rho_n(\xxi,y_n^\dagger,y_n)\ell_n(\dd y_n), 
\end{equation*}
where $\ell_n$ stands for the Lebesgue measure on~$F_n$.
\end{description}
This hypothesis is a version of a condition often imposed in Dobrushin's work on reconstructing random fields on lattices from their conditional distributions (e.g., Assumption~2 in~\cite[Theorem~1]{dobrushin-1970} or~\cite[Theorems~5 and~6]{dobrushin-1968}). If $\dim E <\infty$, we take $F_n=E $ for all $n\ge1$, so that $F_n^\dagger=\{0\}$, and condition~\hyperlink{DLP'}{(DLP$'$)} simplifies to the assumption that $Q(\xxi;\dd y)=\rho(\xxi,y)\ell(\dd y)$, where~$\rho$ is a Lipschitz function. In this case, \hyperlink{Fel}{(SF)} follows from~\hyperlink{DLP'}{(DLP$'$)}. We also note that these two assumptions are consequences of property~\hyperlink{eta1}{($\eta1$)} in the Introduction for both finite and infinite-dimensional spaces~$E$. Indeed, the claim follows easily from the observation that the projection of $Q(\xxi;\cdot)$ to the vector span of $f_1,\dots,f_n$ has the form 
$$
Q^1(\xxi,\dd x_1)\otimes\cdots\otimes Q^n(\xxi,\dd x_n)=\sum_{l=1}^np_\xxi^1(x_1)\dots p_\xxi^n(x_n)\,\dd x_1\dots\dd x_n. 
$$

For any $\xxi\in\EE$ and any integer $k\ge1$, the conditional law of the vector~$(\eta_1,\dots,\eta_k)$ given that the past~$\eeta_-:=\{\eta_l,l\le0\}$ equals~$\xxi$ is the measure 
\begin{equation}\label{conditional-law}
	{\boldsymbol Q}_k(\xxi;\dd y_1,\dots,\dd y_k)
	=Q(\xxi;\dd y_1) Q(\xxi_1;\dd y_2)\cdots Q(\xxi_{k-1};\dd y_k),
\end{equation}
where $\xxi_l=(\xxi, y_1,\dots, y_l)$ is the sequence obtained by concatenation of~$\xxi$ and $(y_1,\dots, y_l)$. Given integers $k\ge0$ and $m\ge1$, we denote by $Q_k^m(\xxi;\cdot)\in\PP(\KK^m)$ the measure 
\begin{equation}\label{Qkm}
	Q_k^m(\xxi;\cdot)=\langle\mbox{projection of ${\boldsymbol Q}_{k+m}(\xxi;\cdot)$ to the last~$m$ components}\rangle. 
\end{equation}
Thus, $Q_k^m(\xxi;\cdot)$ is the conditional law of the vector $(\eta_{k+1},\dots,\eta_{k+m})$ given the past $\eeta_-=\xxi$. The following hypothesis is similar to the property of {\it strong recurrence\/} in the context of Markov processes. 

\begin{description}\sl 
	\item [\hypertarget{SRZ}{(SRZ) Strong recurrence to zero}.] \sl For any $n\in\N$   and $\delta>0$, there is an integer $s\ge0$ such that
	\begin{equation}\label{to-zero}
	\inf_{\xxi\in\EE}Q_s^n\bigl(\xxi;\OO_\delta(\mathbf{0}_n)\bigr)>0. 
	\end{equation}
\end{description}
The following theorem is the main result of this paper. Its proof is based on Theorem~\ref{t-mixing-kantorovich} and is presented in the next subsection.

\begin{theorem}\label{t-mixing-dissipation}
	Suppose that Hypotheses~{\rm \hyperlink{Fel}{(SF)}, \hyperlink{GD}{(GD)}, \hyperlink{ALC}{(ALC)}, \hyperlink{DLP'}{(DLP$'$)}}, and~\hyperlink{SRZ}{\rm(SRZ)} are fulfilled. Then the RDS~\eqref{stationary-RDS} is exponentially mixing. 
\end{theorem}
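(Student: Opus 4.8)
The plan is to deduce the theorem from the abstract criterion of Theorem~\ref{t-mixing-kantorovich}, applied to the lifted Markov system~\eqref{RDS-example} on the compact metric space $\XXXX=X\times\EE$, whose driving noise is $\PPP(U;\cdot)=Q(\xxi;\cdot)$ and whose one-step map $\SSS$ is given by~\eqref{S-stationary} (it is Lipschitz since $S$ is $C^2$ and $X\times\KK$ is compact). By Lemma~\ref{l-reduction} and the projection $\Pi_X$ from~\eqref{nat-proj}, exponential mixing of~\eqref{RDS-example} yields~\eqref{expoconv} for the original system with $\mu=(\Pi_X)_*\mmu$; so it suffices to verify Hypotheses~\hyperlink{DN}{(DN)}, \hyperlink{GCP}{(GCP)}, \hyperlink{LAC}{(LAC)}, and~\hyperlink{DLP}{(DLP)}, for an appropriate choice of the weight $L\ge1$ in the metric~\eqref{metric-X} and of the base $\iota>1$ in~\eqref{distance-EE}. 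Hypothesis~\hyperlink{DN}{(DN)} is immediate from~\hyperlink{Fel}{(SF)}: the measures $\PPP(U;\cdot)=Q(\xxi;\cdot)$ are supported on the compact set $\KK$, depend continuously on $U$, and satisfy $\|Q(\xxi;\cdot)-Q(\xxi';\cdot)\|_{\mathrm{var}}\le C\,\dd(\xxi,\xxi')\le C\,\dd_\XXXX(U,U')$, which is~\eqref{lipcshitz-transition}.

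To obtain~\hyperlink{GCP}{(GCP)}, I would combine~\hyperlink{GD}{(GD)} and~\hyperlink{SRZ}{(SRZ)}. Fix $\e>0$. For parameters $n$ and $\delta$ to be chosen, \hyperlink{SRZ}{(SRZ)} provides a delay $s$, uniform in the past, so that from any $U\in\XXXX$ one may apply $m:=s+n$ admissible controls: $s$ arbitrary ones $\xi_k\in\supp Q(\xxi_{k-1};\cdot)$, followed by $n$ controls lying within $\delta$ of $0$, the latter being admissible because $Q_s^n(\xxi;\OO_\delta(\mathbf 0_n))>0$. During the first $s$ steps the $H$-component stays in the bounded invariant set $X$; during the last $n$ steps, \hyperlink{GD}{(GD)} and the continuity of the maps $S_k$ make it contract by a factor close to $a<1$ every $k$ steps, so that for $n$ large and $\delta$ small $\|v_m\|_H<\e$, uniformly over the starting point in the compact set $X$. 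Meanwhile the most recent $n$ coordinates of the resulting past lie within $\delta$ of $0$, while the remaining coordinates contribute at most $C\iota^{-n}$ to~\eqref{distance-EE}; hence all these endpoints lie within $\e$ of a single point $\widehat U$, namely the endpoint of a fixed reference trajectory driven by such controls. Since $m$ is independent of $U$, this is exactly~\hyperlink{GCP}{(GCP)}.

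The principal step is~\hyperlink{LAC}{(LAC)}, which fuses the determining property~\eqref{determining} with~\hyperlink{ALC}{(ALC)}. Let $(U,U')\in D_\delta$ with $U=(v,\xxi)$, $U'=(v',\xxi')$, and write $\varphi=\varPhi(U,U',\xi)\in F$. Appending controls to the two pasts gives $\dd\bigl((\xxi,\xi),(\xxi',\xi+\varphi)\bigr)=\|\varphi\|_E+\iota^{-1}\dd(\xxi,\xxi')$, so the past contracts by $\iota^{-1}<1$ up to the term $\|\varphi\|_E$. For the $H$-component I split
$$
S(v,\xi)-S(v',\xi+\varphi)=\bigl[S(v,\xi)-S(v',\xi)\bigr]-\bigl[S(v',\xi+\varphi)-S(v',\xi)\bigr].
$$
By~\eqref{determining} the projection of the first bracket onto $G^\perp$ is at most $\varkappa\,\|v-v'\|_H$ up to a second-order remainder, while its projection onto $G$ equals ${\mathsf P}_GD_uS(v',\xi)(v-v')$ to first order. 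Using~\hyperlink{ALC}{(ALC)} and the density of $\cup_nF_n$, I choose $F=F_n$ with $n$ large and solve $D_\eta S(v',\xi)\varphi\approx{\mathsf P}_GD_uS(v',\xi)(v-v')$ with $\varphi\in F$ and $\|\varphi\|_E\le C\|v-v'\|_H$; the residual is uniformly small because the image closure of $D_\eta S(u,\eta)$ contains the finite-dimensional space $G$ for every $(u,\eta)\in O\supset X\times\KK$, which by continuity and compactness makes the residual uniformly small for $n$ large. This cancels the $G$-modes and leaves the $G^\perp$-modes contracted by $\varkappa$. Collecting terms and using $L\|v-v'\|_H\le\dd_\XXXX(U,U')$, the image distance is at most $\max(\varkappa,\iota^{-1})\,\dd_\XXXX(U,U')$ plus error terms of order $\bigl(C/L+\epsilon_n+C\delta\bigr)\dd_\XXXX(U,U')$, where $\epsilon_n\to0$ as $n\to\oo$. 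Fixing $L$ large, then $n$ large, then $\delta$ small gives~\eqref{Phi-squeezing} with some $q<1$, while $\|\varphi\|_E\le(C/L)\,\dd_\XXXX(U,U')$ and the linearity of the construction in $v-v'$ give~\eqref{Phi-bound}. I expect this balancing of the parameters $L$, $\iota$, $\delta$, $n$, together with the control of the Taylor remainders, to be the main obstacle.

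Finally, \hyperlink{DLP}{(DLP)} follows from~\hyperlink{DLP'}{(DLP$'$)} for the same subspace $F=F_n$: the disintegration~\eqref{decomposition-Q} yields~\eqref{decomposition-E-E}, and the Lipschitz density $\rho_n$ provides~\eqref{Q-F-density} and the joint Lipschitz bound~\eqref{Lip}, the latter holding in $U=(v,\xxi)$ because $Q(\xxi;\cdot)$ does not depend on $v$ and $\dd_\XXXX(U,U')\ge\dd(\xxi,\xxi')$. With all four hypotheses in force, Theorem~\ref{t-mixing-kantorovich} furnishes a unique stationary measure $\mmu$ and the bound~\eqref{mixing-RDS} for~\eqref{RDS-example}; projecting by $\Pi_X$ and applying Lemma~\ref{l-reduction} gives~\eqref{expoconv}, which is the assertion of the theorem.
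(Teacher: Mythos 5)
Your proposal is correct and follows essentially the same route as the paper: lifting to the $U$-process on $\XXXX=X\times\EE$, deducing \hyperlink{DN}{(DN)}/\hyperlink{DLP}{(DLP)} from \hyperlink{Fel}{(SF)}/\hyperlink{DLP'}{(DLP$'$)}, verifying \hyperlink{GCP}{(GCP)} by concatenating \hyperlink{SRZ}{(SRZ)} with the contraction of \hyperlink{GD}{(GD)}, and building $\varPhi$ for \hyperlink{LAC}{(LAC)} from a Taylor expansion, the determining property~\eqref{determining}, and an approximate solution of $D_\eta S\,\varphi\approx{\mathsf P}_GD_uS(v-v')$ in $F_n$, with the same ordering of parameter choices ($L$ large, then the approximation error and $\delta$ small). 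The only point you leave implicit is how to choose $\varphi$ as a continuous (Lipschitz-in-$\xi$) family with a uniform bound $\|\varphi\|_E\le C\|v-v'\|_H$ — this is exactly what the paper's Lemma~\ref{l-right-inverse} supplies via a regularised Moore--Penrose pseudo-inverse combined with Dini's theorem.
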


In conclusion of this subsection, let us mention that the question of existence of processes for which Hypotheses~\hyperlink{DLP'}{(DLP$'$)} and~\hyperlink{SRZ}{(SRZ)} are satisfied will be discussed in Section~\ref{s-mixingnoise}. Here we only point out that if an $E$-valued stationary process $\eeta:=\{\eta_k\}_{k\in\Z}$ is such that~\hyperlink{DLP'}{(DLP$'$)} holds for its conditional probabilities~$Q(\xxi;\cdot)$, then we have the following implications:\,
\begin{equation}\label{mixing-recurrence}
	\mbox{strong feeble mixing for~$\eeta$\ $\Longrightarrow$\ strong recurrence\ $\Longrightarrow$\ feeble mixing for~$\eeta$};
\end{equation}
see Subsection~\ref{s-SR-mixing} for exact statements. Thus, our class of admissible random forces~$\eeta$ can be informally characterised as {\it  mixing stationary processes with Lipschitz continuous densities for  conditional laws\/}. 

\subsection{Proof of Theorem~\ref{t-mixing-dissipation}}
\label{s-proofMT}
 Let us recall that the law of a trajectory~$u_k$ for~\eqref{stationary-RDS} issued from~$u\in X$ depends not on the concrete realisation of the process $\{\eta_k,k\ge1\}$, but only on its law. It coincides with that of the first component of the trajectory $U_k=(v_k,\xxi_k)$ for~\eqref{RDS-example}, with the phase space $\XXXX=X\times\EE$, where~$\SSS$ is defined by~\eqref{S-stationary}, and $\DD(U_0)=\delta_u\otimes\sigma$; see~\eqref{sigma-etak}. Below we call the corresponding Markov process in~$\XXXX$ the {\it $U$-process\/}. If we prove that the $U$-process satisfies~\eqref{mixing-RDS}, then taking its projection~$\Pi_X$ to the $X$-component, we can conclude that~$\DD(u_k)$ satisfies~\eqref{expoconv} with 
 \begin{equation}\label{proj-mu}
 \mu=(\Pi_X)_*\mmu\,;
 \end{equation}
see~\eqref{nat-proj}. Moreover, applying~\eqref{mixing-RDS} to a solution of~\eqref{RDS} with initial condition~$U_0$ as above and using~\eqref{new-relation}, we get 
\begin{equation}\label{grr}
(\Pi_\EE)_*\mmu=\sigma. 	
\end{equation}
 
Thus, to prove Theorem~\ref{t-mixing-dissipation}, it suffices to check that the three hypotheses of Theorem~\ref{t-mixing-kantorovich} are fulfilled. The proof of this fact is divided into three steps. To simplify notation, we denote by~$\ssigma\in\PP(E^\Z)$ the law of $\eeta=\{\eta_k,k\in\Z\}$ and write~$\ssigma_{\!m}$ for the law of $(\eta_1,\dots,\eta_m)$, so that~$\ssigma_{\!m}$ is  the projection of~$\ssigma$ to~$E^m$. 

\smallskip
{\it Step~1: Transition probability\/}. We first express the law of the noise in the RDS~\eqref{RDS-example} and the transition probability of the $U$-process in terms of the objects entering~\eqref{stationary-RDS}. To this end, note that since  the law of~$\zeta_1^\xxi$  equals~$Q(\xxi;\cdot)$ for any $\xxi\in\EE$, we have
\begin{equation}\label{Q-law-stat}
	\PPP(U;\Gamma)=Q(\xxi;\Gamma)
	\quad\mbox{for $\Gamma\in\BB(E)$,  $U=(v,\xxi)\in\XXXX$}.
\end{equation}
This relation and Hypothesis~\hyperlink{DLP'}{(DLP$'$)} imply that $\PPP(U;\cdot)$ satisfies Hypothesis~\hyperlink{DLP}{(DLP)}, in which one can take $F=F_n$ with an arbitrary $n\ge1$. 

We now describe the transition probability of the $U$-process. Recalling~\eqref{S-stationary} and~\eqref{TF}, we see that the time-$1$ transition probability can be written as 
\begin{equation}\label{transition-1}
P_1\bigl(U,B\times(\GGamma\times\Gamma)\bigr)=\delta_\xxi(\GGamma)\int_{\Gamma}{\mathbf1}_B\bigl(S(v,y)\bigr)Q(\xxi;\dd y), 
\end{equation}
where $U$ and $\Gamma$ are the same as in~\eqref{Q-law-stat}, $\GGamma\in\BB(\EE)$, and $B\in\BB(X)$. In other words, for $U=(v,\xxi)\in\XXXX$, denoting $\Psi_U:\KK\to X\times\EE$, $\eta\mapsto \SSS(U,\eta)$, we can write $P_1(U;\cdot)=(\Psi_U)_*Q(\xxi;\cdot)$. Then, using~\eqref{transition-1} and arguing by induction, it is straightforward to see that the time-$k$ transition probability has the form 
\begin{equation*}\label{transition-k}
	P_k\bigl(U,B\times(\GGamma\times \Gamma_k)\bigr)
	=\delta_\xxi(\GGamma)\int_{\Gamma_k}{\mathbf1}_B\bigl(S_k(v;y_1,\dots,y_k)\bigr){\boldsymbol Q}_k(\xxi;\dd y_1,\dots,\dd y_k),
\end{equation*}
where $U=(v,\xxi)\in\XXXX$, $\GGamma\in\BB(\EE)$, $\Gamma_k\in\BB(E^k)$, $B\in\BB(X)$,  and we recall~\eqref{notat} and~\eqref{conditional-law}. 
\smallskip

{\it Step~2: Global controllability to  points~\hyperlink{GCP}{\rm(GCP)}\/}. We claim that the required property holds with $\widehat U=(0,{\mathbf0})$, where ${\mathbf0}\in\EE$ is the sequence whose elements are all zero. Indeed, let us fix any $\e>0$ and choose an integer $l\ge1$ and a number~$\delta>0$ so that, for any element $\boldsymbol{\hat\xi}=(\hat\xi_j,j\le0)\in\EE$ satisfying the inequality $\|\hat\xi_j\|_E\le\delta$ for $1-l\le j\le0$, we have $\dd(\boldsymbol{\hat\xi},{\mathbf0})<\e/3$. Let us take any $U=(u,\xxi)\in\XXXX$. By~\hyperlink{GD}{(GD)}, there is an integer $n\ge l$ not depending on~$U$ such that $\|S_n\bigl(u;{\mathbf0}_n)\|_H<\frac{\e}{3L}$. Since~$S$ is continuous, we can find $\delta>0$ such that   
\begin{equation}\label{e/2}
	\bigl\|S_n\bigl(u;\xi_1',\dots,\xi_n'\bigr)\bigr\|_H<\frac{2\e}{3L}\quad\mbox{for $u\in X$, $(\xi_1',\dots,\xi_n')\in \OO_\delta({\mathbf0}_n)$}. 
\end{equation}
By~\hyperlink{SRZ}{(SRZ)}, there is $s\in\N$ such that~\eqref{to-zero} holds. It follows that there exist vectors $\xi_1,\dots,\xi_{s+n}\in\KK$ such that 
\begin{gather}
(\xi_{s+1},\dots,\xi_{s+n})\in \OO_\delta({\mathbf0}_n),\label{xi-in-delta}\\
\xi_{j+1}\in \supp Q\bigl((\xxi;\xi_1,\dots,\xi_j);\cdot\,\bigr)\quad\mbox{for $0\le j<s+n$}.\label{xi-j-supp}
\end{gather}
We now set $v_k=S(v_{k-1},\xi_k)$, where $1\le k\le s+n$ and $v_0=u$. In view of~\eqref{e/2} and~\eqref{xi-in-delta}, we have $\|v_{s+n}\|_H<\frac{2\e}{3L}$. Furthermore, inequality~\eqref{xi-in-delta} and the choice of~$l$ imply that $\dd(\xxi_{s+n},{\mathbf0})<\frac{\e}{3}$, where $\xxi_k=(\xxi,\xi_1,\dots,\xi_k)$. Recalling that the $U$-process is given by $U_k=(v_k,\xxi_k)$ and using~\eqref{metric-X}, we see that 
$$
\dd_\XXXX(U_{s+n},\widehat U)=L\|v_{s+n}\|_H+\dd(\xxi_{s+n},{\mathbf0})<\e. 
$$
Finally, it follows from~\eqref{xi-j-supp} and~\eqref{Q-law-stat} that $\xi_k\in\supp\PPP(U_{k-1};\cdot)$ for $1\le k\le s+n$. This completes the verification of~\eqref{xi_k-support} with $m=s+n$.

\smallskip
{\it Step~3: Local approximate controllability~\hyperlink{LAC}{\rm(LAC)}\/}. To prove~\hyperlink{LAC}{(LAC)}, we define the set 
$$
\widetilde D_\delta=\{(v,v')\in X\times X:\|v-v'\|_H\le\delta\}
$$
and suppose that, for a small $\delta>0$, we have constructed a continuous map 
$$
\tilde\varPhi:\widetilde D_{\delta}\times \KK\to E, \quad (v,v',\xi)\mapsto\xi',
$$
such that, for any $(v,v')\in \widetilde D_{\delta}$, the map $\tilde\varPhi(v,v',\cdot):\KK\to E$ is continuously differentiable and satisfies the inequalities (cf.~\eqref{Phi-bound} and~\eqref{Phi-squeezing})
 \begin{align}
\sup_{\xi\in \KK}\Bigl(\|\tilde\varPhi(v,v',\xi)\|_E+\Lip_\xi\bigl(\tilde\varPhi(v,v',\xi)\bigr)\Bigr)&\le C'\|v-v'\|_H, \label{tPhi-bound}\\
\sup_{\xi\in \KK}\,\bigl\|S(v,\xi)-S\bigl(v',\xi+\tilde\varPhi(v,v',\xi)\bigr)\bigr\|_H &\le q'\|v-v'\|_H, \label{tPhi-squeezing}
\end{align}
where $C'$ and $q'<1$ are some positive numbers. In this case, recalling the constant $L\ge1$ in the distance~\eqref{metric-X} and defining $\varPhi:D_\delta\times \KK\to E$ by the relation
$$
\varPhi(U,U',\xi)=\tilde\varPhi(v,v',\xi), \quad U=(v,\xxi), \quad U'=(v',\xxi'), 
$$ 
we see that~\eqref{Phi-bound} is trivially satisfied with $C_*=C'L^{-1}$. To prove~\eqref{Phi-squeezing}, we use~\eqref{tPhi-bound} and~\eqref{tPhi-squeezing} to write
\begin{align*}
	\dd_\XXXX\bigl(\SSS(U,\xi),\SSS(U',\xi+\varPhi(U,U',\xi)\bigr) 
	&\le \iota^{-1}\,\dd(\xxi,\xxi')+\|\varPhi(U,U',\xi)\|_E\\
	&\qquad+ L\, \|S(v,\xi)-S(v',\xi+\tilde\varPhi(v,v',\xi)\|_H\\
	&\le \iota^{-1}\,\dd(\xxi,\xxi')+L\,(L^{-1}C'+q')\|v-v'\|_H.
\end{align*} 
Up to this point, the number~$L\ge1$ was arbitrary. We now choose it so large that $L^{-1}C'+q'<1$. Then the above estimate implies inequality~\eqref{Phi-squeezing} with $q=\max\{\iota^{-1}$, $L^{-1}C'+q'\}<1$. Thus, it remains to construct~$\tilde\varPhi$ satisfying~\eqref{tPhi-bound} and~\eqref{tPhi-squeezing}. 

To this end, let us denote by $\Delta(v,v',\xi)$ the expression under the norm sign in the left-hand side of~\eqref{tPhi-squeezing}, where $\tilde\varPhi$ is a map to be defined. Using the Taylor formula and the $C^2$-smoothness of~$S$, we write 
	\begin{equation}\label{taylor}
	\Delta(v,v',\xi)=(D_vS)(v,\xi)(v-v')-(D_\xi S)(v,\xi)\tilde\varPhi(v,v',\xi)+r(v,v',\xi),
\end{equation}
where the remainder term~$r$ satisfies the inequality
\begin{equation}\label{estimate-r}
	\|r(v,v',\xi)\|_H\le C_1\bigl(\|v'-v\|_H^2+\|\tilde\varPhi(v,v',\xi)\|_E^2\bigr)
\end{equation}
with some constant~$C_1$ not depending on~$v$, $v'$, and~$\xi$. We now need the following lemma whose proof is given in Section~\ref{s-proof-lemma} and is based on  the {\it Moore--Penrose formula\/} for pseudo-inverse; cf.~\cite[Theorem~2.8]{KNS-gafa2020}.  

\begin{lemma}\label{l-right-inverse}
	Let~$H$ and~$E$ be separable Hilbert and Banach spaces, $G\subset H$ be a finite-dimensional vector subspace, $O$ be an open subset of a Banach space~$\HH$, and $A:O\to\LL(E,H)$ be a $C^1$-smooth map such that, for any $y\in O$, the closure of the image of the linear application~$A(y)$ contains~$G$. Let finite-dimensional subspaces $F_n\subset E$ be as in~\hyperlink{DLP'}{\rm(DLP$'$)}.  Then, for any $\e>0$ and any compact set $Y\subset O$, there is an integer $n\ge1$ and a $C^1$-smooth map\footnote{Let us emphasise that the map~$B_\e$ is defined on~$O$, but it depends on the choice of~$Y$.} $B_\e:O\to\LL(G,F_n)$ such that 
\begin{equation}\label{ARI}
	\|A(y)B_\e(y)f-f\|_H\le \e\,\|f\|_H
	\quad\mbox{for any $f\in G$, $y\in Y$}. 
\end{equation}
\end{lemma}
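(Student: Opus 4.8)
The plan is to build $B_\e(y)$ as an approximate right inverse of $A(y)$ over the finite-dimensional target $G$, taking values in a fixed $F_n$. The crucial reduction is that, since $B_\e(y)f\in F_n$ while the error $\|A(y)B_\e(y)f-f\|_H$ in~\eqref{ARI} is measured in the full norm of~$H$, the estimate amounts to approximating each $f\in G$, in the $H$-norm, by an element of the subspace $A(y)F_n\subset H$. Thus the heart of the matter is a \emph{uniform} approximation statement: there is $n$ such that $\dist_H(f,A(y)F_n)\le\frac{\e}{2}\|f\|_H$ for all $f\in G$ and all $y\in Y$. I stress that it would not suffice to project onto $G$ first (i.e.\ to make $\mathsf P_GA(y)|_{F_n}$ surjective), since that leaves the component of $A(y)B_\e(y)f$ orthogonal to $G$ uncontrolled.

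First I would establish the uniform approximation together with a local $C^1$ inverse. Fix an orthonormal basis $f_1,\dots,f_d$ of~$G$. Given $y_0\in Y$, the hypothesis $G\subset\overline{\mathrm{im}\,A(y_0)}$ together with the density of $\cup_nF_n$ in~$E$ yields, for arbitrarily small $\rho>0$, an integer $n_0$ and vectors $e_1,\dots,e_d\in F_{n_0}$ with $\|A(y_0)e_i-f_i\|_H<\rho$. Setting $V_0=\lspan(e_1,\dots,e_d)\subset F_{n_0}$, the vectors $A(y_0)e_i$ are nearly orthonormal, so $A(y_0)|_{V_0}\colon V_0\to H$ is injective; by continuity of~$A$ this persists on a neighbourhood $W_{y_0}$ of~$y_0$. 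On $W_{y_0}$ I define $B_{y_0}(y)$ to be the Moore--Penrose right inverse of $A(y)|_{V_0}$, namely $B_{y_0}(y)=\Gamma(y)^{-1}(A(y)|_{V_0})^*$ restricted to~$G$, where $\Gamma(y)=(A(y)|_{V_0})^*(A(y)|_{V_0})$ is the invertible Gram operator on the fixed finite-dimensional space~$V_0$. Because $V_0$ is fixed and $A(y)$ stays injective on it, $\Gamma(y)^{-1}$ is $C^1$ in~$y$, hence so is $B_{y_0}$. Moreover $A(y)B_{y_0}(y)f$ is exactly the orthogonal projection of~$f$ onto $\mathrm{im}(A(y)|_{V_0})=\lspan(A(y)e_i)$, and since this span lies within $2\rho\sqrt d$ of~$G$, shrinking $\rho$ and $W_{y_0}$ gives $\|A(y)B_{y_0}(y)f-f\|_H\le\e\|f\|_H$ for $y\in W_{y_0}$, $f\in G$.

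Finally I would globalise by patching. Cover the compact set~$Y$ by finitely many such neighbourhoods $W_{y_1},\dots,W_{y_k}$, set $n=\max_i n_i$ (so each local inverse takes values in $F_n$), choose a $C^1$ partition of unity $\chi_1,\dots,\chi_k$ subordinate to this cover with $\sum_i\chi_i\equiv1$ on~$Y$, and define $B_\e(y)=\sum_i\chi_i(y)B_{y_i}(y)$, extended by the zero operator outside $\cup_iW_{y_i}$. This is $C^1$ on~$O$ and valued in $\LL(G,F_n)$. The decisive point is that the defect estimate is convex: for $y\in Y$ and $f\in G$,
$$
\|A(y)B_\e(y)f-f\|_H\le\sum_i\chi_i(y)\,\|A(y)B_{y_i}(y)f-f\|_H\le\e\|f\|_H,
$$
since $\chi_i(y)>0$ forces $y\in W_{y_i}$, where the $i$-th defect is $\le\e\|f\|_H$. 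This is precisely~\eqref{ARI}.

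The main obstacle, and the reason for this indirect construction, is that the naive global object — the pseudo-inverse of $A(y)|_{F_n}$ — is \emph{not} continuous in~$y$: the rank of $A(y)|_{F_n}$, and with it $\mathrm{im}\,A(y)|_{F_n}$ and the associated projection, can jump as $y$ varies, so no uniform lower bound on the singular values is available. Restricting to a fixed $d$-dimensional subspace $V_0$ on which injectivity is stable removes the rank jumps and makes the Moore--Penrose formula genuinely $C^1$; the price is that $V_0$ is good only locally, which forces the partition-of-unity patching. The remaining point requiring care is the existence of a $C^1$ partition of unity on $O\subset\HH$, i.e.\ of $C^1$ bump functions on the ambient Banach space; this holds in the situations of interest (in particular when $\HH$ is Hilbert, or more generally admits smooth bumps), and it is the convexity of the defect estimate that lets the averaged operator inherit the bound.
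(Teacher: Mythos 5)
Your construction is sound in its finite-dimensional core and is genuinely different from the paper's: you localise, pick at each $y_0$ a fixed $d$-dimensional subspace $V_0\subset F_{n_0}$ on which $A(y)$ stays injective, take the exact Moore--Penrose inverse there (so that $A(y)B_{y_0}(y)$ is the orthogonal projection onto $A(y)V_0$, which is uniformly close to~$G$), and patch with a partition of unity, exploiting the convexity of the defect bound. The paper instead works globally with a single Tikhonov-regularised pseudo-inverse
$$
B(y,\delta)=A_n(y)^*\bigl(A_n(y)A_n(y)^*+\delta I\bigr)^{-1},
$$
where $A_n(y)=A(y)|_{F_n}$ and $n$ is chosen once and for all by Dini's theorem so that $\dist(f,A(y)F_n)\le \e\,\|f\|_H$ uniformly on $Y\times\overline{B_G(1)}$; the regularisation $+\delta I$ makes this formula $C^1$ in~$y$ on all of~$O$ regardless of rank jumps, the operator $D(y,\delta)=A(y)B(y,\delta)$ has norm at most~$1$, and a monotonicity-in-$\delta$ argument combined with an $\e$-net in $\overline{B_G(1)}$ and the compactness of~$Y$ produces a single admissible~$\delta_\e$.

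The one genuine gap in your argument is the step you yourself flag: the $C^1$ partition of unity on $O\subset\HH$. The lemma is stated for an arbitrary Banach space~$\HH$, and in the application $\HH=H\times E$ with~$E$ a general separable Banach space; such spaces need not admit Fr\'echet-$C^1$ bump functions at all (for instance, spaces containing an isomorphic copy of~$\ell^1$ or~$C[0,1]$ do not, since a separable space with a Fr\'echet-smooth bump has separable dual). Since the application genuinely requires $B_\e$ to be $C^1$ in the $E$-variable (it enters the map~$\tilde\varPhi$ in~\eqref{Phi-definition}, which must be continuously differentiable in~$\xi$), your proof as written covers only the case where~$\HH$ admits smooth partitions of unity. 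The regularised adjoint formula above is precisely the device that removes this restriction: it is a single closed-form expression, $C^1$ wherever~$A$ is, and needs no patching.
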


By Hypothesis~\hyperlink{ALC}{(ALC)}, for any $(v,\xi)\in O\subset H\times E$, the closure of the image of the operator $D_\xi S(v,\xi):E\to H$ contains the finite-dimensional subspace~$G$. Applying Lemma~\ref{l-right-inverse} to $A(v,\xi)=D_\xi S(v,\xi)$, for any $\e>0$ we can construct $C^1$-smooth mapping $B_\e:O\to\LL(G,F_n)$ such that 
\begin{equation}\label{right-inverse}
\|D_\xi S(v,\xi)B_\e(v,\xi)f-f\|_H\le\e\,\|f\|_H\quad\mbox{for $(v,\xi)\in X\times \KK$, $f\in G$}.
\end{equation}
We now set 
\begin{equation}\label{Phi-definition}
\tilde\varPhi(v,v',\xi)=B_\e(v,\xi)\,{\mathsf P}_G(D_vS)(v,\xi)(v-v'). 	
\end{equation}
The validity of~\eqref{tPhi-bound} with some constant $C'=C'(\e)$ follows from the compactness of $X\times \KK$ and the continuity of the functions entering~\eqref{Phi-definition}. Furthermore, substituting~$\tilde\varPhi$ into~\eqref{taylor} and using~\eqref{determining}, \eqref{right-inverse}, and~\eqref{estimate-r}, we derive
$$
\|\Delta(v,v',\xi)\|_H\le (\varkappa+C_2\e)\|v-v'\|_H+C_3\|v-v'\|_H^2,
$$
where we set 
$$
C_2=\sup\bigl\{\bigl\|{\mathsf P}_G(D_vS)(v,\xi)\bigr\|_{\LL(H,G)}:(v,\xi)\in X\times \KK\bigr\},\quad C_3=C_1(1+C').
$$ 
Up to this point, the number~$\e>0$ was arbitrary. We now choose $\e>0$ and $\delta>0$ so that $q':=\varkappa+C_2\e+C_3\delta<1$. In this case, inequality~\eqref{tPhi-squeezing} holds, and this completes the proof of the theorem.\qed

\subsection{Concluding remarks}
\label{s-remarks}

	The proof given above implies two additional properties: the stationarity of the limiting measure~$\mu$ and convergence in the space of  trajectories. Namely, we have the two propositions below, provided that the conditions of  Theorem~\ref{t-mixing-dissipation} are fulfilled.   

\begin{proposition}[Stationarity of~$\mu$]\label{p-stationarity}
Equation~\eqref{stationary-RDS} possesses a weak solution $\{(\hat u_k,\hat\xi_k),k\in\Z_+\}$ such that $\{(\hat u_k,\hat\xi_k)\}$ is a stationary process in~$X\times\EE$. Moreover, $\DD(\hat u_k)\equiv\mu$.
\end{proposition}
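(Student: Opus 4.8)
The plan is to realise the desired stationary solution as the trajectory of the $U$-process issued from its invariant measure. By Theorem~\ref{t-mixing-kantorovich} the Markov process associated with the RDS~\eqref{RDS-example} has a unique stationary measure~$\mmu$; I would take the process $\{\hat U_k=(\hat u_k,\hat\xxi_k),k\ge0\}$ started from $\DD(\hat U_0)=\mmu$. Since it is the trajectory of a time-homogeneous Markov chain started from an invariant law, it is stationary in $\XXXX=X\times\EE$, which is exactly the stationarity asserted in the statement. Projecting onto the two factors and using~\eqref{proj-mu} and~\eqref{grr}, I obtain $\DD(\hat u_k)=(\Pi_X)_*\mmu=\mu$ and $\DD(\hat\xxi_k)=(\Pi_\EE)_*\mmu=\sigma$ for every~$k\ge0$; the first of these is already the identity $\DD(\hat u_k)\equiv\mu$ claimed in the proposition.

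It remains to exhibit $\{\hat u_k\}$ as a weak solution of~\eqref{stationary-RDS}. By the special form~\eqref{S-stationary} of~$\SSS$, each transition $\hat U_{k-1}\mapsto\hat U_k$ appends one fresh symbol $\hat\eta_k\in\KK$ to the past, so that $\hat\xxi_k=(\hat\xxi_{k-1},\hat\eta_k)$ and $\hat u_k=S(\hat u_{k-1},\hat\eta_k)$, and, by construction of the $U$-process, the conditional law of $\hat\eta_k$ given $\hat U_{k-1}$ is $Q(\hat\xxi_{k-1};\cdot)$. Reading off $\hat\eta_j=(\hat\xxi_0)_j$ for $j\le0$ and $\hat\eta_k=(\hat\xxi_k)_0$ for $k\ge1$ then defines a two-sided process $\{\hat\eta_k\}_{k\in\Z}$ that satisfies the equation in~\eqref{stationary-RDS} pathwise; note that no two-sided extension of $\{\hat U_k\}$ is needed, since $\hat\xxi_0\in\EE$ already encodes the whole past.

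The key point is to verify that $\DD(\{\hat\eta_k\}_{k\in\Z})=\ssigma$, that is, that the reconstructed noise has exactly the law of the original driving process. I would do this by a consistency argument: the law of $(\hat\xxi_0,\hat\eta_1,\hat\eta_2,\dots)$ is determined by the law of the past~$\hat\xxi_0$ together with the one-step-ahead conditional kernels; here the past has law~$\sigma$ by the first paragraph and the forward kernels are the $Q(\cdot;\cdot)$, which depend only on the current past. These are precisely the data that define~$\ssigma$ through~\eqref{conditional-law}, so the finite-dimensional distributions coincide and $\DD(\{\hat\eta_k\})=\ssigma=\DD(\eeta)$. A cleaner alternative, avoiding the noise entirely, is to apply Lemma~\ref{l-reduction} to the admissible initial condition $\hat U_0=(\hat u_0,\hat\xxi_0)$ (admissible because $\hat U_0$ is $\FF_0$-measurable and $\DD(\hat\xxi_0)=\sigma$): together with the remark following that lemma, it shows that $\DD([\hat u_0,\dots,\hat u_k])$ equals the law of the corresponding segment of a weak solution of~\eqref{stationary-RDS} for every~$k$.

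Either way, $(\{\hat u_k\},\{\hat\eta_k\})$ solves~\eqref{stationary-RDS} with a noise distributed as~$\DD(\eeta)$ and is therefore a weak solution; it is stationary and satisfies $\DD(\hat u_k)\equiv\mu$, which completes the proof. I expect the last identification of the noise law to be the main obstacle: one must make sure that fixing the past marginal~$\sigma$ and the forward kernels~$Q$ genuinely determines the full two-sided law~$\ssigma$, using the stationarity of $\{\hat U_k\}$ to propagate $\DD(\hat\xxi_k)=\sigma$ to all~$k$. Invoking Lemma~\ref{l-reduction} is the most economical way to circumvent this technical step.
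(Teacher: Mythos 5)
Your proposal is correct and follows essentially the same route as the paper: take a stationary trajectory of the $U$-process with law~$\mmu$, use~\eqref{grr} to see that the $\EE$-marginal is~$\sigma$, invoke Lemma~\ref{l-reduction} to identify the $X$-component as a weak solution, and read off $\DD(\hat u_k)\equiv\mu$ from~\eqref{proj-mu}. The alternative hands-on consistency argument you sketch for the law of the reconstructed noise is fine but unnecessary, since (as you note) Lemma~\ref{l-reduction} already packages that step.
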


\begin{proof}
	Let us consider again the extended system~\eqref{RDS-example}, which generates a Markov $U$-process in the space $\XXXX=X\times\EE$. The latter has a unique stationary measure $\mmu\in\PP(\XXXX)$ whose projection to~$X$ is the limiting measure $\mu\in\PP(X)$ for~\eqref{stationary-RDS}; see~\eqref{proj-mu}. Let~$\{V_k=(v_k,\xxi_k)\}$ be a stationary trajectory of~\eqref{RDS-example}, so that $\DD(V_k)\equiv\mmu$. It suffices to take $\hat u_k=v_k$ for $k\ge0$. Indeed, by~\eqref{grr}, $\DD(\xxi_k)=\sigma$. So, by Lemma~\ref{l-reduction}, $\{v_k,k\ge0\}$ is a weak solution of~\eqref{stationary-RDS}, and $\DD(v_k)=(\Pi_X)_*\mmu=\mu$ for each~$k\ge0$. The construction readily implies that $\{\hat u_k,k\ge0\}$ is an $X$-valued stationary process whose law is equal to~$\mu$ at any time $k\ge0$.
\end{proof} 

Relation~\eqref{stationary-RDS} implies that the process $\{\hat u_k\}_{k\in\Z_+}$ is feebly mixing in the sense of Section~\ref{s-SR-mixing}; cf.\ the proof of Proposition~\ref{p-support}.

\begin{proposition}[Convergence of trajectories]\label{p-CT}
Let~$u$ be an $X$-valued random variable independent of~$\{\eta_k\}$ and let $\{u_k,k\in\Z_+\}$ be the trajectory defined by~\eqref{stationary-RDS}, \eqref{IC-stationary}. Then, for any integer $s\ge1$, the process $[u_k,\dots,u_{k+s}]$ converges in law in the dual-Lipschitz metric to $[\hat u_0,\dots,\hat u_s]$, where~$\hat u_k$ is the stationary trajectory described above. Moreover, the convergence is uniform with respect to all initial conditions. 	
\end{proposition}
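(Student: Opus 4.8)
The plan is to lift $\{u_k\}$ to the Markov $U$-process on $\XXXX=X\times\EE$, apply the trajectory estimate~\eqref{convergence-V-s} of Theorem~\ref{t-mixing-kantorovich} there, and then transport the bound back to $X$ by the projection~$\Pi_X$. This will in fact produce an exponential rate, which is stronger than bare convergence in law and automatically yields uniformity over initial conditions.

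\textbf{Step 1 (lifting).} First I would realise $\{u_k\}$ as the $X$-component of the $U$-process. Since $u$ is independent of $\{\eta_k\}$, it is independent of the past $\eeta_-=\{\eta_k,k\in\Z_-\}$, whose law is~$\sigma$; hence $V:=(u,\eeta_-)$ is an $\FF_0$-measurable $\XXXX$-valued random variable with $\DD(\Pi_\EE V)=\sigma$. Applying Lemma~\ref{l-reduction} to the trajectory $\{U_k=(v_k,\xxi_k)\}$ of~\eqref{RDS-example} issued from~$V$, together with the remark following that lemma (valid for weak solutions), gives $\DD([v_0,\dots,v_k])=\DD([u_0,\dots,u_k])$ for every~$k$. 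Writing $\Pi_X^{(s+1)}\colon\XXXX^{s+1}\to X^{s+1}$ for the coordinatewise projection and using $v_j=\Pi_X(U_j)$, this upgrades to
\[
\DD(u_k,\dots,u_{k+s})=\bigl(\Pi_X^{(s+1)}\bigr)_*\DD(U_k,\dots,U_{k+s}),\qquad k\ge0.
\]

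\textbf{Step 2 (the $U$-process estimate).} Next I would invoke~\eqref{convergence-V-s}: for a deterministic starting point $W\in\XXXX$ it gives $\|\DD(U_k,\dots,U_{k+s}\mid U_0=W)-\mmu_{s+1}\|_L^*\le C_se^{-\gamma k}$ with $C_s$ independent of~$W$. Because $V$ is $\FF_0$-measurable and independent of the noise driving~\eqref{RDS-example}, conditioning on $U_0=W$ and integrating against $\lambda:=\DD(V)$, and using the triangle inequality for the norm $\|\cdot\|_L^*$ (i.e. $\langle f,\int(\cdot)\,\dd\lambda\rangle=\int\langle f,\cdot\rangle\,\dd\lambda$ followed by the supremum over admissible~$f$), yields
\[
\|\DD(U_k,\dots,U_{k+s})-\mmu_{s+1}\|_L^*
\le\int_\XXXX\bigl\|\DD(U_k,\dots,U_{k+s}\mid U_0=W)-\mmu_{s+1}\bigr\|_L^*\,\lambda(\dd W)
\le C_se^{-\gamma k},
\]
a bound independent of~$\lambda$, hence of the initial condition.

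\textbf{Step 3 (projection and limit).} Finally I would push forward to~$X$. By~\eqref{metric-X} the map $\Pi_X$ is Lipschitz with constant $L^{-1}\le1$, so $\Pi_X^{(s+1)}$ is $1$-Lipschitz for the natural product metrics; then $f\mapsto f\circ\Pi_X^{(s+1)}$ carries admissible test functions on $X^{s+1}$ to admissible ones on $\XXXX^{s+1}$, so pushforward under $\Pi_X^{(s+1)}$ does not increase $\|\cdot\|_L^*$. It remains to identify the limit: if $\{V_k\}$ is the stationary trajectory of the $U$-process (Theorem~\ref{t-mixing-kantorovich}, Proposition~\ref{p-stationarity}) with $\hat u_k=\Pi_X(V_k)$, then by stationarity $\bigl(\Pi_X^{(s+1)}\bigr)_*\mmu_{s+1}=\DD(\hat u_0,\dots,\hat u_s)=\mu^{s+1}$. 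Combining the three steps gives
\[
\|\DD(u_k,\dots,u_{k+s})-\mu^{s+1}\|_L^*\le C_se^{-\gamma k}\qquad\text{for all }k\ge0,
\]
uniformly over all initial conditions, as claimed. The argument is essentially a bookkeeping deduction from Theorem~\ref{t-mixing-kantorovich}; the only delicate points will be fixing compatible product metrics so that $\Pi_X^{(s+1)}$ is genuinely $1$-Lipschitz (so the dual-Lipschitz norm truly contracts under the pushforward) and the identification $\bigl(\Pi_X^{(s+1)}\bigr)_*\mmu_{s+1}=\mu^{s+1}$ via stationarity of the lifted process. Neither is a real obstacle, since the substantive work has already been done in the proof of Theorem~\ref{t-mixing-kantorovich}.
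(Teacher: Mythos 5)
Your proposal is correct and follows essentially the same route as the paper: the paper's proof also lifts the trajectory to the $U$-process started from an initial state with law $\lambda\otimes\sigma$, applies estimate~\eqref{convergence-V-s} of Theorem~\ref{t-mixing-kantorovich}, and projects to the first component. Your write-up merely makes explicit the averaging over the initial law, the $1$-Lipschitz property of $\Pi_X$, and the identification of the limit via Proposition~\ref{p-stationarity}, all of which the paper leaves implicit.
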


\begin{proof}
Let us consider again the extended system~\eqref{RDS-example} and its stationary trajectory $\{V_k,k\in\Z_+\}$ as in Theorem~\ref{t-mixing-kantorovich}. Let~$U_k$ be the trajectory of~\eqref{RDS-example} issued from an initial state whose law equals $\lambda\otimes\sigma\in\PP(\XXXX)$, where $\lambda=\DD(u)\in\PP(X)$. Projecting estimate~\eqref{convergence-V-s} to the first component, we obtain the required convergence.
\end{proof}

Finally, let us notice that, by Theorem~\ref{t-mixing-dissipation}, Theorem~\ref{t-mixing-kantorovich} applies to the $U$-process in the space $\XXXX=X\times\EE$ and, hence, the latter is exponentially mixing. So, for any $V=(v,\xxi)\in X\times\EE$, the corresponding trajectory~$U_k$ defined by~\eqref{RDS-example} converges to~$\mmu$ in the dual-Lipschitz metric. The following proposition, whose proof may be obtained by literal repetition of the arguments in Section~\ref{s-proofMT}, shows that the initial point~$V$ can be taken from the larger set~$X\times\bKK$, provided that the strong recurrence to zero holds for~$\xxi\in\bKK$; cf.\ Theorem~\ref{p-stationary-convergence} below.

\begin{proposition}\label{p-generalisation}
Let $\{Q(\xxi;\cdot),\xxi\in\bKK\}\subset\PP(\KK)$ be a transition probability on~$\KK$ that coincides with the conditional probability  introduced in Subsection~\ref{ss-RDS} for $\xxi\in\EE$. Suppose that the hypotheses of Theorem~\ref{t-mixing-dissipation} are fulfilled, with  inequality~\eqref{to-zero} replaced by the stronger condition 
	\begin{equation}\label{to-zero-K}
		\inf_{\xxi\in\bKK}Q_s^n\bigl(\xxi;\OO_\delta(\mathbf{0}_n)\bigr)>0,
	\end{equation}
	where $Q_s^n(\xxi;\cdot)$ is defined in~\eqref{Qkm}. Then there are positive numbers~$\gamma$ and~$C$ such that 
	\begin{equation*}
		\sup_{V\in X\times\bKK}\|\DD(U_k)-\mmu\|_L^*\le Ce^{-\gamma k}, \quad k\ge0,
	\end{equation*}
	where $\mmu\in\PP(\XXXX)$ is the stationary measure of the $U$-process on~$X\times\EE$ constructed in the proof of Theorem~\ref{t-mixing-dissipation}, and $\|\cdot\|_L^*$ stands for the dual-Lipschitz metric over~$X\times\bKK$. 
\end{proposition}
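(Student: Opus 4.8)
\smallskip

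The plan is to apply Theorem~\ref{t-mixing-kantorovich} directly to the $U$-process on the \emph{enlarged} phase space $\XXXX'=X\times\bKK$ in place of $\XXXX=X\times\EE$, and then to identify the resulting stationary measure with~$\mmu$. Note first that $\XXXX'$ is again a compact metric space, the metric~\eqref{metric-X} extending verbatim with the distance~\eqref{distance-EE} on~$\bKK$. Since Propositions~\ref{p-constructionRR} and~\ref{p-key-estimates}, and hence Theorem~\ref{t-mixing-kantorovich} itself, are formulated for an abstract RDS of the form~\eqref{RDS} on an arbitrary compact metric space, it suffices to verify Hypotheses~\hyperlink{DN}{(DN)}, \hyperlink{GCP}{(GCP)}, \hyperlink{LAC}{(LAC)}, and~\hyperlink{DLP}{(DLP)} for the $U$-process on~$\XXXX'$, whose time-$1$ kernel is again $P_1(U;\cdot)=(\Psi_U)_*Q(\xxi;\cdot)$ for $U=(v,\xxi)\in\XXXX'$, now with $\Psi_U:\KK\to X\times\bKK$.

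First I would check that the construction of Section~\ref{s-proofMT} transfers without change. Hypotheses~\hyperlink{DN}{(DN)} and~\hyperlink{DLP}{(DLP)} come from the Lipschitz density structure of~\hyperlink{DLP'}{(DLP$'$)}, now read over~$\bKK$ rather than~$\EE$; this is legitimate because $\{Q(\xxi;\cdot)\}_{\xxi\in\bKK}$ is assumed to be a transition probability agreeing with the conditional law on~$\EE$. Hypothesis~\hyperlink{LAC}{(LAC)} is obtained exactly as in Step~3: the map $\tilde\varPhi$ of~\eqref{Phi-definition} is built from~$B_\e$, ${\mathsf P}_G(D_vS)$, and the determining subspace~$G$, all of which depend only on $v,v'\in X$ and $\xi\in\KK$ and not on the past component~$\xxi$, so the bounds~\eqref{tPhi-bound}--\eqref{tPhi-squeezing}, and therefore~\eqref{Phi-bound}--\eqref{Phi-squeezing}, hold unchanged on $D_\delta\subset\XXXX'\times\XXXX'$.

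The crux, and the sole place where the strengthened hypothesis enters, is Hypothesis~\hyperlink{GCP}{(GCP)}. Here I would replay Step~2 with target point $\widehat U=(0,\mathbf0)$, which lies in~$\XXXX'$ since $0\in\KK$: the bound~\eqref{to-zero-K} supplies, uniformly over \emph{all} $\xxi\in\bKK$, an integer~$s$ and controls $\xi_1,\dots,\xi_{s+n}\in\KK$ satisfying~\eqref{xi-in-delta} and~\eqref{xi-j-supp}, after which~\hyperlink{GD}{(GD)} and the continuity estimate~\eqref{e/2} push $\|v_{s+n}\|_H$ and $\dd(\xxi_{s+n},\mathbf0)$ below~$\e$ exactly as in the original argument. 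It is precisely because $\bKK$ may be strictly larger than $\EE=\supp\sigma$ that the recurrence condition~\eqref{to-zero} must be upgraded to the uniform statement~\eqref{to-zero-K}; with~\eqref{to-zero-K} in force, controllability to~$\widehat U$ from every point of~$X\times\bKK$ holds, and no further ingredient is affected.

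With the four hypotheses verified, Theorem~\ref{t-mixing-kantorovich} produces a unique stationary measure $\mmu'\in\PP(\XXXX')$ and the bound $\|P_k(V;\cdot)-\mmu'\|_L^*\le Ce^{-\gamma k}$ uniform over $V\in X\times\bKK$. It then remains to show $\mmu'=\mmu$. For this I would observe that~$\mmu$, supported on $X\times\EE\subset X\times\bKK$, is itself stationary for the enlarged process: the kernel~$P_1$ does not transport mass out of $X\times\EE$, because $\EE$ is forward-invariant under the concatenation $(\xxi,\eta)$ with $\eta\in\supp Q(\xxi;\cdot)$ --- a consequence of~$\sigma$ being the stationary law of the past of the process with conditional kernel~$Q$ --- and on $X\times\EE$ the enlarged kernel coincides with the original one. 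Uniqueness then forces $\mmu'=\mmu$, and the displayed estimate follows. The main obstacle I anticipate is purely bookkeeping: confirming the forward-invariance of $X\times\EE$ (so that~$\mmu$ survives as a stationary measure on the larger space) and that the Lipschitz density structure of~\hyperlink{DLP'}{(DLP$'$)} is retained after extending $Q(\xxi;\cdot)$ to~$\bKK$. Both are routine but must be recorded, since apart from them the argument is a literal replay of Section~\ref{s-proofMT}.
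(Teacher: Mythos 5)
Your proposal is correct and follows exactly the route the paper intends: the paper's own ``proof'' is just the remark that the argument of Section~\ref{s-proofMT} repeats verbatim on $X\times\bKK$, with the strengthened recurrence~\eqref{to-zero-K} needed precisely (and only) to recover~\hyperlink{GCP}{(GCP)} from every past $\xxi\in\bKK$ rather than just $\xxi\in\EE$. Your additional step identifying the stationary measure on the enlarged space with~$\mmu$ via forward-invariance of $X\times\EE$ (a consequence of the shift-invariance of~$\sigma$) and uniqueness is a point the paper leaves implicit, and you handle it correctly.
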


\subsection{Proof of Lemma~\ref{l-right-inverse}}
\label{s-proof-lemma}

The result would follow immediately from Theorem~2.8 in~\cite{KNS-gafa2020} if~$E$ was a Hilbert space. Since this is not the case, we need to proceed differently. 

Let us fix a compact set $Y\subset O$ and a number $\e>0$, and denote by $\dd_H(f,H')$ the distance of~$f$ from a subspace~$H'\subset H$. Since the union $\cup_n F_n$ is dense in~$E$, then the continuous functions $(y,f)\mapsto \dd_H\bigl(f,A(y)F_n)\bigr)$ pointwise monotonically converge to zero as~$n\to\infty$.  By the Dini theorem, this convergence is uniform on compact sets.  So there exists an integer $n\ge1$ such that
\begin{equation}\label{approx-G}
	\sup_{y\in Y}\sup_{f\in \overline{B_G(1)}}\dd_H\bigl(f,A(y)F_n)\bigr)\le \e.
\end{equation}
Let us endow~$F_n$ with an inner product and denote by~$A_n(y)$ the restriction of~$A(y)$ to~$F_n$. Then we can consider the adjoint operator $A_n(y)^*:H\to F_n$ and, given $\delta>0$, define the family of operators
$$
B(y,\delta)=A_n(y)^*\bigl(A_n(y)A_n(y)^*+\delta I\bigr)^{-1}:H\to F_n,
$$
which form a $C^1$ function of $(y,\delta)\in O\times (0,+\infty)$ with range in~$\LL(H,F_n)$. Suppose we have proved that, for any $y\in Y$ and $f\in \overline{B_G(1)}$, 
\begin{equation}\label{limsup-e}
	\limsup_{\delta\to0^+}\|D(y,\delta)f-f\|_H\le 2\e,
\end{equation}
where we set $D(y,\delta)=A(y)B(y,\delta)$. Noting that $\|D(y,\delta)\|_{\LL(H)}\le 1$ for any~$(y,\delta)$ and denoting by $f_1,\dots,f_N\in G$ an $\e$-net in~$\overline{B_G(1)}$, we can write 
$$
\|D(y,\delta)f-f\|_H\le 2\e+\|D(y,\delta)f_j-f_j\|_H,
$$
where $j\in\{1,\dots,N\}$ is such that $\|f-f_j\|_H\le\e$. Combining this with~\eqref{limsup-e}, we see that, for any $y\in Y$ and a sufficiently small $\delta_y>0$, we have 
$$
\sup_{f\in \overline{B_G(1)}}\|D(y,\delta)f-f\|_H\le 3\e
\quad\mbox{for $0<\delta\le\delta_y$}. 
$$
Since the function $\delta\mapsto \|D(y,\delta)f-f\|_H$ decreases with $\delta>0$, using the compactness of~$Y$ and the continuity of the function $y\to D(y,\delta)$, we can find $\delta_\e>0$ such that 
$$
\sup_{y\in Y}\sup_{f\in \overline{B_G(1)}}\|D(y,\delta_\e)f-f\|_H\le 4\e. 
$$
By homogeneity, this implies the required inequality~\eqref{ARI} with $B_\e(y)=B(y,\delta_\e)$, in which~$\e$ on the right-hand side is replaced with~$4\e$. Thus, it remains to establish~\eqref{limsup-e} for fixed~$y$ and~$f$. 

In view of~\eqref{approx-G}, there is $f_\e\in A_n(y)F_n$ such that $\|f-f_\e\|_H\le \e$. Since the norm of~$D(y,\delta)$ does not exceed~$1$, we obtain
\begin{equation}\label{estimate-dye}
\|D(y,\delta)f-f\|_H\le 2\e+\|D(y,\delta)f_\e-f_\e \|_H.	
\end{equation}
Using the spectral theorem for self-adjoint operators, it is straightforward to check that $\|D(y,\delta)f_\e-f_\e \|_H\to0$ as $\delta\to0^+$; see the proof of Lemma~2.7 in~\cite{KNS-gafa2020}. Combining this with~\eqref{estimate-dye}, we arrive at~\eqref{limsup-e}. This completes the proof of the lemma.

\section{Stationary processes satisfying the mixing hypothesis} 
\label{s-mixingnoise}
In this section, we discuss Hypotheses~\hyperlink{Fel}{(SF)}, \hyperlink{DLP'}{(DLP$'$)}, and~\hyperlink{SRZ}{(SRZ)} and construct stationary processes for which they are satisfied. This type of questions were intensively studied in the middle of last century; see the papers~\cite{dobrushin-1968,dobrushin-1970,dobrushin-1974} and the references therein. Here  we prove that the existence and uniqueness (in law) of a stationary process with  prescribed conditional laws follows from Theorem~\ref{t-mixing-kantorovich}. We also discuss the relation of properties~\hyperlink{DLP'}{(DLP$'$)} and~\hyperlink{SRZ}{(SRZ)} with various concepts of mixing for stationary processes. 

\subsection{Existence and uniqueness}
Let $E$ be a separable Banach space, $\KK\subset E$ be a compact subset, and $\bKK=\KK^{\Z_-}$ be the direct product of countably many copies of~$\KK$ with the distance~$\dd$ as in~\eqref{distance-EE}. We begin with the following result due to Dobrushin~\cite{dobrushin-1970}.

\begin{proposition}\label{p-dobrushin}
	Let $\{Q(\xxi; \cdot),\xxi\in\bKK\}\subset\PP(E)$ be a transition probability from~$\bKK$ to~$E$ for which  $\supp Q(\xxi; \cdot)\subset\KK$ for any $\xxi\in\bKK$ and Hypotheses~\hyperlink{Fel}{\rm(SF)} and~\hyperlink{DLP'}{\rm(DLP$'$)} hold with $\EE=\bKK$,  and let ${\boldsymbol Q}_k(\xxi;\cdot)\in\PP(E^k)$ be defined by~\eqref{conditional-law} for each $k\ge1$. Then there is an $E$-valued stationary random process $\{\eta_l\}_{l\in\Z}$ such that, for every $k\ge1$, the conditional law of~$(\eta_1,\dots,\eta_k)$ given the past $\xxi\in \bKK$ is equal to~${\boldsymbol Q}_k(\xxi;\cdot)$ for $\xxi\in\supp\sigma$, where~$\sigma$ is defined by~\eqref{sigma-etak}.
\end{proposition}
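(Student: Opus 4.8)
The plan is to realise the prescribed conditional laws as the stationary regime of the ``noise component'' of the extended dynamics from Subsection~\ref{ss-RDS}, viewed as a Markov chain on the compact space $\bKK=\KK^{\Z_-}$, and then to obtain a stationary measure by compactness. Concretely, I would set $\XXXX=\bKK$, take $\PPP(\xxi;\cdot):=Q(\xxi;\cdot)$, and define the shift-and-append map $\SSS:\bKK\times\KK\to\bKK$, $\SSS(\xxi,\eta)=(\xxi,\eta)$, placing the new symbol~$\eta$ at position~$0$ and shifting the old coordinates. Hypothesis~\hyperlink{Fel}{(SF)} is exactly the continuity and total-variation Lipschitz bound demanded by~\hyperlink{DN}{(DN)}, and all supports lie in~$\KK$; hence by Proposition~\ref{p-markov-chain} the pair $(\SSS,\PPP)$ generates a Feller Markov chain on the compact metric space~$\bKK$, with one-step kernel $P_1(\xxi;\cdot)=\SSS_*(\xxi,Q(\xxi;\cdot))$ as in~\eqref{TF}.

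For the existence of an invariant law I would simply invoke compactness: $\PP(\bKK)$ is weakly compact, so a weak limit point~$\sigma$ of the Ces\`aro sums $k^{-1}\sum_{j=0}^{k-1}P_j(v;\cdot)$ (for any fixed $v\in\bKK$) exists, and the Feller property forces $\PPPP_1^*\sigma=\sigma$. I would stress that the full strength of Theorem~\ref{t-mixing-kantorovich} is not invoked here: for this chain~\hyperlink{DLP'}{(DLP$'$)} yields~\hyperlink{DLP}{(DLP)} and the contraction $\dd(\SSS(\xxi,\eta),\SSS(\xxi',\eta))=\iota^{-1}\dd(\xxi,\xxi')$ yields~\hyperlink{LAC}{(LAC)} with $q=\iota^{-1}$, but global controllability~\hyperlink{GCP}{(GCP)} would require steering every past toward a common point, which is unavailable without~\hyperlink{SRZ}{(SRZ)}. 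Thus only existence is at stake, and the Krylov--Bogolyubov argument above suffices.

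It remains to produce the process and verify its two defining properties. A single run of the chain started from $X_0\sim\sigma$ already yields the whole bi-infinite sequence: the coordinates of $X_0$ furnish the past $(\eta_l)_{l\le0}$, while $\eta_k:=(X_k)_0$ furnishes the future symbols for $k\ge1$. For the conditional law, the Markov property of $\{X_k\}$ shows that, given $X_0=\xxi$, one draws $\eta_1\sim Q(\xxi;\cdot)$, then $\eta_2\sim Q(\xxi_1;\cdot)$ with $\xxi_1=(\xxi,\eta_1)$, and so on, which is precisely the concatenation~\eqref{conditional-law} defining $\boldsymbol{Q}_k(\xxi;\cdot)$; this identity holds for $\sigma$-almost every~$\xxi$, and the continuity in~\hyperlink{Fel}{(SF)} promotes it to every $\xxi\in\supp\sigma$. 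For stationarity, shifting $\{\eta_l\}$ by one unit replaces the initial datum $X_0$ by $X_1$ and leaves the forward kernel unchanged; since $\PPPP_1^*\sigma=\sigma$ gives $\DD(X_1)=\sigma=\DD(X_0)$, the shifted process has the same law, which is exactly the required stationarity.

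The main obstacle I anticipate is bookkeeping rather than depth. One must keep the reindexing between a point $X_k\in\bKK$ (a half-infinite word) and the scalar symbols $\eta_l$ perfectly consistent, so that ``one Markov step equals one unit shift'' holds on the nose, and one must respect that the conditional-law identity is first an almost-sure statement, upgraded to $\supp\sigma$ only through the continuity in~\hyperlink{Fel}{(SF)} (since $\sigma$ need not charge all of~$\bKK$). No uniqueness is claimed: that would demand~\hyperlink{SRZ}{(SRZ)} together with the contraction estimate of Theorem~\ref{t-mixing-kantorovich}, and is the content of the later Theorem~\ref{p-stationary-convergence}.
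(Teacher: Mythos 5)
Your proof is correct, but it takes a genuinely different route from the paper: for Proposition~\ref{p-dobrushin} the paper gives no argument at all and simply invokes Theorem~1 of Dobrushin's paper~\cite{dobrushin-1970} on reconstructing a process from its conditional distributions. Your construction --- the shift-and-append Markov chain on the compact space $\bKK$ with kernel $P_1(\xxi;\cdot)=\SSS_*(\xxi,Q(\xxi;\cdot))$, an invariant measure obtained by Krylov--Bogolyubov from Ces\`aro averages and the Feller property, and then the identification of the canonical process --- is a complete, self-contained and more elementary proof under the stated (compactness plus~\hyperlink{Fel}{(SF)}) hypotheses; note that~\hyperlink{DLP'}{(DLP$'$)} plays no role in your argument, which is harmless since the proposition assumes it anyway. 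It is essentially the same chain the paper uses later in the proof of Theorem~\ref{p-stationary-convergence} (relation~\eqref{RDS-noise}), except that there the invariant measure is produced via the full mixing machinery of Theorem~\ref{t-mixing-kantorovich}, which requires the extra recurrence hypothesis~\hyperlink{SR}{(SR)}; the footnote in that proof explicitly concedes that existence holds under the weaker hypotheses of Proposition~\ref{p-dobrushin}, and your Krylov--Bogolyubov shortcut is exactly the elementary argument that fills this gap without citing Dobrushin. Your bookkeeping is sound: one Markov step is one unit shift because $\dd(\SSS(\xxi,\eta),\SSS(\xxi',\eta'))=\|\eta-\eta'\|_E+\iota^{-1}\dd(\xxi,\xxi')$, invariance of~$\sigma$ plus the fixed forward kernel gives invariance of the law of the bi-infinite sequence under the unit shift (hence under all shifts, the shift being invertible), and the upgrade of the conditional-law identity from $\sigma$-a.e.\ to all of $\supp\sigma$ via the continuity in~\hyperlink{Fel}{(SF)} is exactly the right way to read the statement. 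You are also right that uniqueness is not claimed and genuinely needs~\hyperlink{SRZ}{(SRZ)}/\hyperlink{SR}{(SR)}, as in Theorem~\ref{p-stationary-convergence}.
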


This proposition follows from Theorem~1 in~\cite[Section~2]{dobrushin-1970} (applied to Case~2 discussed there). We do not give further details, referring the reader to the original paper and noting that the law of the constructed process may not be unique, and that Hypothesis~\hyperlink{SRZ}{(SRZ)} is not necessarily satisfied for it. The following result (whose proof does not use Proposition~\ref{p-dobrushin}) shows that the uniqueness holds, provided that the conditional measures, in addition to~\hyperlink{Fel}{\rm(SF)} and~\hyperlink{DLP'}{\rm(DLP$'$)}, satisfy a {\it strong recurrence\/} property, which is slightly more general than~\hyperlink{SRZ}{(SRZ)}; cf.\ Proposition~\ref{p-generalisation}.  Moreover, the result and its proof remain true in the non-stationary case, as is mentioned in Remark~\ref{r_intro}~(2). 

\begin{theorem}\label{p-stationary-convergence}
In addition to the conditions of Proposition~\ref{p-dobrushin}, let us assume that $\{Q(\xxi;\cdot),\xxi\in\bKK\}$ possesses the following property:
	\begin{description}
		\item [\hypertarget{SR}{(SR)} Strong recurrence.] \sl For any integer $n\ge1$ and any $\delta>0$, there is a vector $\vec{\xi}_n=(\bar\xi_1,\dots,\bar\xi_n)\in E^n$ and an integer~$s\ge0$ such that 
\begin{equation}\label{xi-s-transition}
	\inf_{\xxi\in\bKK}Q_s^n\bigl(\xxi;\OO_\delta(\vec{\xi}_n)\bigr)>0,
\end{equation}
where the measures $Q_s^n\bigl(\xxi;\cdot)$ are constructed from the transition probability $Q\bigl(\xxi;\cdot)$ using~\eqref{conditional-law} and~\eqref{Qkm}.
	\end{description}
Then there is an $E$-valued stationary random process $\{\eta_k\}_{k\in\Z}$ such that the conditional measure of~$\eta_1$ given the past~$\xxi\in\bKK$ is equal to~$Q(\xxi;\cdot)$ for any $\xxi\in \EE:=\supp\sigma$, where $\sigma=\DD(\{\eta_k,k\in\Z_-\})$. Moreover, the law of the process~$\{\eta_k\}$ is uniquely defined, and there are positive numbers~$C$ and~$\gamma$ such that, for any $m,k\ge0$,
	\begin{equation}\label{convergence-condlaw}
	\sup_{\xxi\in\bKK}\,\bigl\|Q_k^m(\xxi;\cdot)-\ssigma_{\!m}\bigr\|_L^*\le C\,e^{-\gamma k},	
	\end{equation}
	where $\ssigma_{\!m}\in\PP(E^m)$ is the law of $(\eta_1,\dots,\eta_m)$. 
\end{theorem}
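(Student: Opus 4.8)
The plan is to recognise this statement as the ``noise-only'' special case of the abstract criterion, Theorem~\ref{t-mixing-kantorovich}, applied to the \emph{pure shift dynamics} on the space of pasts. Concretely, I would take $\XXXX=\bKK$ (compact and metrised by~\eqref{distance-EE}) and let $\SSS:\bKK\times\KK\to\bKK$ be the shift-and-append map sending $(\xxi,\eta)$ to the sequence whose new zeroth coordinate is~$\eta$ and whose remaining coordinates are those of~$\xxi$ pushed one step into the past. With the transition probability $\PPP(\xxi;\cdot):=Q(\xxi;\cdot)$, the RDS~\eqref{RDS} becomes exactly the Markov chain on~$\bKK$ that propagates the past of a process governed by the conditional laws~$Q$, and by Proposition~\ref{p-markov-chain} it is a Feller process on the compact space~$\bKK$.

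The core of the proof is the verification of the four hypotheses of Theorem~\ref{t-mixing-kantorovich}. Hypothesis~\hyperlink{DN}{(DN)} is immediate from~\hyperlink{Fel}{(SF)}, since $\supp Q(\xxi;\cdot)\subset\KK$ and the total-variation Lipschitz bound is the second part of~\hyperlink{Fel}{(SF)}. Hypothesis~\hyperlink{DLP}{(DLP)} follows from~\hyperlink{DLP'}{(DLP$'$)} taken at $n=1$, the disintegration~\eqref{decomposition-Q} and its Lipschitz density furnishing~\eqref{decomposition-E-E}--\eqref{Q-F-density} with $F=F_1$. The key observation is that~\hyperlink{LAC}{(LAC)} holds \emph{trivially} with $\varPhi\equiv0$ and this same $F=F_1$: a direct computation from~\eqref{distance-EE} gives, for all $\xxi,\xxi'\in\bKK$ and $\eta\in\KK$,
\[
\dd\bigl(\SSS(\xxi,\eta),\SSS(\xxi',\eta)\bigr)=\iota^{-1}\,\dd(\xxi,\xxi'),
\]
because appending the same~$\eta$ equalises the zeroth coordinates and pushes the remaining discrepancy one step deeper, where the weights $\iota^{-l}$ shrink it by the factor $\iota^{-1}<1$. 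Thus the shift is automatically a contraction on the past space: \eqref{Phi-squeezing} holds with $q=\iota^{-1}$ and \eqref{Phi-bound} is vacuous. Only~\hyperlink{GCP}{(GCP)} carries genuine content, and here~\hyperlink{SR}{(SR)} enters. Given~$\e$, I would pick~$n$ so large that $\iota^{-n}\,\mathrm{diam}(\KK)$ is negligible and $\delta$ so small that a block of~$n$ coordinates lying in $\OO_\delta(\vec{\xi}_n)$ keeps the $\dd$-distance to a fixed target below~$\e$; then $\widehat U\in\bKK$ is any sequence whose last~$n$ coordinates are a point of $\KK^n$ within~$\delta$ of~$\vec{\xi}_n$. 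Since~\hyperlink{SR}{(SR)} makes $Q_s^n(\xxi;\cdot)$ charge $\OO_\delta(\vec{\xi}_n)$ for every~$\xxi$, unwinding the definition~\eqref{conditional-law}--\eqref{Qkm} of $Q_s^n$ produces, exactly as in Step~2 of the proof of Theorem~\ref{t-mixing-dissipation} (cf.~\eqref{xi-in-delta}--\eqref{xi-j-supp}), admissible $\xi_1,\dots,\xi_{s+n}$ in the successive supports with $(\xi_{s+1},\dots,\xi_{s+n})\in\OO_\delta(\vec{\xi}_n)$; this is~\eqref{xi_k-support} with $m=s+n$.

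Granting these, Theorem~\ref{t-mixing-kantorovich} yields a unique stationary measure $\mmu\in\PP(\bKK)$ and the bound~\eqref{mixing-RDS}. I would then read off the process: for a stationary trajectory $\{\Xi_k\}$ with $\Xi_0\sim\mmu$, the zeroth coordinates $\eta_k:=(\Xi_k)_0$ together with the coordinates of~$\Xi_0$ define a bi-infinite sequence $\{\eta_k\}_{k\in\Z}$ whose past-marginal is~$\mmu$, so $\sigma=\mmu$ and $\EE=\supp\mmu$. The identity $P_1^*\mmu=\mmu$ is precisely one-step shift-invariance of the past-marginal, which, together with the Markovian generation of the future by~$Q$, gives stationarity of $\{\eta_k\}$ with conditional law $Q(\xxi;\cdot)$ on~$\EE$. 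For uniqueness, any stationary process with the prescribed conditional laws on the support of its own past-marginal has that marginal $P_1$-invariant, hence equal to~$\mmu$ by the uniqueness in Theorem~\ref{t-mixing-kantorovich}, and this pins down the whole law.

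Finally, for~\eqref{convergence-condlaw} I would use the identities $Q_k^m(\xxi;\cdot)=(\pi_m)_*P_{k+m}(\xxi;\cdot)$ and $\ssigma_{\!m}=(\pi_m)_*\mmu$, where $\pi_m:\bKK\to\KK^m$ reads off the~$m$ newest coordinates; the second follows from stationarity ($\Xi_m\sim\mmu$). Equipping $E^m$ with the weighted metric inherited from~\eqref{distance-EE} makes $\pi_m$ a $1$-Lipschitz map, so pushing~\eqref{mixing-RDS} forward through~$\pi_m$ gives $\|Q_k^m(\xxi;\cdot)-\ssigma_{\!m}\|_L^*\le Ce^{-\gamma(k+m)}\le Ce^{-\gamma k}$ uniformly in~$\xxi$ and~$m$. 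The main obstacle I anticipate is bookkeeping rather than conceptual: one must check that the \emph{deterministic} controllability~\eqref{xi_k-support}, with its support constraint at each step, really follows from the \emph{probabilistic} positivity in~\hyperlink{SR}{(SR)}, and that the metric on~$E^m$ carrying~$\ssigma_{\!m}$ is the inherited weighted one, so that no $\iota^{m}$-type factor spoils uniformity in~$m$. The non-stationary version in Remark~\ref{r_intro}(2) needs no new idea, only time-dependent kernels with uniform constants.
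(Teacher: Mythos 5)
Your proposal follows essentially the same route as the paper's own proof: reduce to the shift-and-append RDS on $\bKK$ with kernel $Q$, verify the hypotheses of Theorem~\ref{t-mixing-kantorovich} (with \hyperlink{GCP}{(GCP)} coming from \hyperlink{SR}{(SR)} via the argument of Step~2 in Section~\ref{s-proofMT} and \hyperlink{LAC}{(LAC)} trivially satisfied with $\varPhi\equiv0$), and then read off existence, uniqueness, and~\eqref{convergence-condlaw} from the resulting exponential mixing by projecting to the last $m$ coordinates. Your explicit contraction computation $\dd(\SSS(\xxi,\eta),\SSS(\xxi',\eta))=\iota^{-1}\dd(\xxi,\xxi')$ and your attention to the choice of metric on $E^m$ making $\pi_m$ $1$-Lipschitz are correct refinements of points the paper treats implicitly.
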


\begin{proof}
%
{\it Step~1: Proof of~\eqref{convergence-condlaw}\/}. Let us set $\XXXX=\bKK$ and denote by $\{\zeta^\xxi,\xxi\in\XXXX\}$ a random field on~$\KK$ such that $\DD(\zeta^\xxi)=Q(\xxi;\cdot)$ for any $\xxi\in\XXXX$. Consider the following RDS on~$\XXXX$, defined on some suitable probability space~$(\widehat\Omega,\widehat\FF,\widehat\IP)$:
	\begin{equation}\label{RDS-noise}
		\xxi_k=\bigl(\xxi_{k-1},\zeta_k^{\xxi_{k-1}}\bigr), \quad k\ge1.
	\end{equation}
	Here $\zeta_k^\xxi(\widehat\omega)=\zeta^\xxi(\omega_k)$ for any $\xxi\in\XXXX$ and $\widehat\omega\in\widehat\Omega$; cf.~\eqref{RDS-example}. We note that~\eqref{RDS-noise} can be written in the form~\eqref{RDS}, where the mapping $\SSS:\XXXX\times \KK\to \XXXX$ is given by $\SSS(\xxi,\eta)=(\xxi,\eta)$. By Proposition~\ref{p-markov-chain}, it defines a Markov process in~$\XXXX$. It follows from~\eqref{RDS-noise} that 
\begin{equation}\label{one-step}
	P_1(\xxi;\cdot)=\delta_{\xxi}\otimes Q(\xxi;\cdot)\in \PP(\bKK)\otimes\PP(\KK)\simeq\PP(\XXXX),
\end{equation}
so that 
\begin{equation}\label{transition-k-noise}
P_l(\xxi;\cdot)=\delta_\xxi\otimes{\boldsymbol Q}_l(\xxi;\cdot)\in \PP(\bKK)\otimes\PP(\KK^l)\simeq\PP(\XXXX)
\end{equation}
for $l\ge1$. In particular, if we write $\xxi_k$ as $(\xi_k^i,i\le k)$, then taking $l=k+m$ in~\eqref{transition-k-noise} and projecting to the last~$m$ components, we see that 
\begin{equation}\label{xi-k-projection}
	\DD\bigl(\{\xi_k^i, k+1\le i\le k+m\}\bigr)=Q_k^m(\xxi;\cdot).
\end{equation}

We claim that the RDS~\eqref{RDS-noise} satisfies all the conditions of Theorem~\ref{t-mixing-kantorovich}. Indeed, Hypothesis~\hyperlink{GCP}{(GCP)} follows from~\hyperlink{SR}{(SR)} and an argument used in Step~2 of Section~\ref{s-proofMT}, with~${\mathbf0}_n$ replaced by $\vec\xi_n=(\bar\xi_1,\dots,\bar\xi_n)$. Furthermore, Hypothesis~\hyperlink{LAC}{(LAC)} with $F=\{0\}$ is trivial since we can take $\varPhi\equiv0$,  and~\hyperlink{DLP}{(DLP)} follows from~\hyperlink{DLP'}{(DLP$'$)}. Thus, the conclusion of Theorem~\ref{t-mixing-kantorovich} holds for~\eqref{RDS-noise}, so that~\eqref{RDS-noise} has a unique stationary measure $\sigma\in\PP(\XXXX)$, and there are positive numbers~$C$ and~$\gamma$ such that
\begin{equation}\label{xxi-k-ssigma}
	\sup_{\xxi\in\XXXX}\|\DD(\xxi_k)-\sigma\|_L^*\le Ce^{-\gamma k}, \quad k\ge0.
\end{equation}
Denoting by~$\ssigma_m\in\PP(\KK^m)$ the projection of~$\sigma$ to the last $m$ components and using~\eqref{xi-k-projection}, we obtain~\eqref{convergence-condlaw} as a direct consequence of~\eqref{xxi-k-ssigma}. 

\smallskip
{\it Step~2: Stationarity and construction of the process\/}. 
We now prove that $\sigma\in\PP(\XXXX)$ is shift invariant. To this end, it suffices to show that
\begin{equation}\label{shift-invariance}
	\ssigma_{m+1}(\Gamma\times\KK)=\ssigma_m(\Gamma)
	\quad\mbox{for any $m\ge1$ and $\Gamma\in\PP(\KK^m)$}.  
\end{equation}
The definition of~$Q_k^m$ (see~\eqref{Qkm}) implies that 
$$
Q_k^{m+1}(\xxi;\Gamma\times\KK)=Q_k^m(\xxi;\Gamma)\quad
\mbox{for any $\Gamma\in\BB(\KK^m)$}. 
$$
Passing to the (weak) limit as $k\to\infty$ in this relation (regarded as equality of measures) and using~\eqref{convergence-condlaw}, we arrive at~\eqref{shift-invariance}. 

Thus, $\sigma$ is shift invariant, so that, by the Kolmogorov theorem, it can be extended to a shift invariant measure on~$\KK^\Z$, which is denoted by~$\ssigma$.  We now define a stationary sequence~$\{\eta_k,k\in\Z\}$ as the canonical process corresponding to~$\ssigma$. In the next step, we show that the conditional measures of~$\{\eta_k\}$ are given by~$Q(\xxi;\cdot)$, establishing thus the existence claim of the theorem.\footnote{By Proposition~\ref{p-dobrushin}, the existence is valid under weaker hypotheses. Our proof does not use Dobrushin's result and is given to make the presentation self-contained.}

\smallskip
{\it Step~3: Conditional measures\/}. Let us consider a trajectory of~\eqref{RDS-noise} issued from a random initial condition~$\xxi_0$ that depends only on~$\omega_0$ and is distributed as~$\sigma\in\PP(\XXXX)$. In view of the stationarity of~$\sigma$, the Markov property, and relation~\eqref{one-step}, for any bounded continuous function $F:\XXXX\times\KK\to\R$, we have 
\begin{align*}
\int_\XXXX F(\xxi)\sigma(\dd\xxi)&=\E F(\xxi_1)
=\E\bigl( \E\{F(\xxi_0,\zeta_1^{\xxi_0})|\,\FF_0\}\bigr)\\
&=\E\int_\KK F(\xxi_0,\xi)P_1(\xxi_0;\dd\xi)
=\int_\XXXX \sigma(\dd\xxi)\int_\KK F(\xxi,\xi)Q(\xxi;\dd\xi). 
\end{align*}
We see that $Q(\xxi;\cdot)$ is indeed the conditional measure of~$\eta_1$, given the past~$\eeta=\xxi$.

\smallskip
{\it Step~4: The uniqueness\/}.  Suppose there is another $\KK$-valued stationary process $\{\eta_k'\}_{k\in\Z}$ whose conditional law is given by~$Q(\xxi;\cdot)$ for $\xxi\in\EE'$, where $\EE'\subset\bKK$ is the support of the law of $\{\eta_k',k\in\Z_-\}$, and denote by~$\ssigma'\in\PP(\KK^\Z)$ its law. We wish to prove that $\ssigma'=\ssigma$. 
	
	Given an integer $m\ge1$, we denote by $\ssigma_{\!m}'\in\PP(\KK^m)$ the law of the random vector $(\eta_1',\dots,\eta_m')$. For any integers $k,m\ge1$ and any function $f\in C_b(E^m)$, we can write 
	$$
	\int_{\KK^m}f(\yyy)\ssigma_{\!m}'(\dd\yyy)
	=\int_\bKK \int_{\KK^m}f(\yyy)Q_k^m(\xxi;\dd\yyy)\sigma'(\dd\xxi),
	$$
	where $\sigma'\in\PP(\bKK)$ stands for the projection of~$\ssigma'$ to~$\bKK$. Passing to the limit as $k\to\infty$ and using~\eqref{convergence-condlaw}, we obtain $\langle f,\ssigma_{\!m}'\rangle=\langle f,\ssigma_{\!m}\rangle$. Since this is true for any $f\in C_b(E^m)$, we see that $\ssigma_{\!m}'=\ssigma_{\!m}$ for all $m\ge1$, whence we conclude that $\ssigma=\ssigma'$. The proof of the theorem is complete. 
	\end{proof}

\begin{corollary}\label{c-process}
	Let $Q(\xxi;\cdot)$ be a transition probability from~$\bKK$ to~$\KK$ that satisfies Hypotheses~\hyperlink{Fel}{\rm(SF)}, \hyperlink{DLP'}{\rm(DLP$'$)}, and~\hyperlink{SRZ}{\rm(SRZ)}. Then there is a unique in distribution stationary random process~$\{\eta_k\}_{k\in\Z}$ whose conditional measure coincides with~$Q(\xxi;\cdot)$ for $\xxi\in\supp\sigma$, where $\sigma$ is given by~\eqref{sigma-etak}. 
\end{corollary}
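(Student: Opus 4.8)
The plan is to deduce Corollary~\ref{c-process} directly from Theorem~\ref{p-stationary-convergence}, since the two statements have identical conclusions---existence of a stationary process with the prescribed conditional law, together with uniqueness of its law---and differ only in the precise form of the recurrence hypothesis imposed on $Q(\xxi;\cdot)$. Accordingly, the entire task reduces to checking that the hypotheses of the Corollary imply those of Theorem~\ref{p-stationary-convergence}.

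First I would observe that, in the setting of the Corollary, no process---and hence no measure $\sigma$---is given a priori, so the requirement that $Q(\xxi;\cdot)$ satisfy~\hyperlink{Fel}{(SF)}, \hyperlink{DLP'}{(DLP$'$)}, and~\hyperlink{SRZ}{(SRZ)} is to be read with $\EE=\bKK$, exactly as in Proposition~\ref{p-dobrushin}. With this reading, Hypotheses~\hyperlink{Fel}{(SF)} and~\hyperlink{DLP'}{(DLP$'$)} with $\EE=\bKK$ are precisely the conditions of Proposition~\ref{p-dobrushin} that are assumed in Theorem~\ref{p-stationary-convergence}.

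It then remains to verify the strong recurrence hypothesis~\hyperlink{SR}{(SR)} of Theorem~\ref{p-stationary-convergence}. But this is immediate: given $n\ge1$ and $\delta>0$, Hypothesis~\hyperlink{SRZ}{(SRZ)} furnishes an integer $s\ge0$ with $\inf_{\xxi\in\bKK}Q_s^n(\xxi;\OO_\delta(\mathbf{0}_n))>0$, which is exactly~\eqref{xi-s-transition} for the particular choice $\vec{\xi}_n=\mathbf{0}_n$. Thus~\hyperlink{SRZ}{(SRZ)} is the special case of~\hyperlink{SR}{(SR)} in which the recurrence target is the origin, so all hypotheses of Theorem~\ref{p-stationary-convergence} are met. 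Applying that theorem yields the existence of the desired stationary process and the uniqueness of its law, which is the assertion of the Corollary.

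I do not expect any genuine obstacle here: the content of the Corollary is entirely contained in Theorem~\ref{p-stationary-convergence}, and the only points requiring care are the bookkeeping identification $\EE=\bKK$ and the trivial observation that recurrence to zero is a particular instance of recurrence to a prescribed vector $\vec{\xi}_n$.
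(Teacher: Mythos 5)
Your proposal is correct and is exactly the argument the paper intends (the Corollary is stated without a separate proof precisely because it is the special case $\vec{\xi}_n=\mathbf{0}_n$ of Hypothesis~\hyperlink{SR}{(SR)} in Theorem~\ref{p-stationary-convergence}, with~\hyperlink{Fel}{(SF)} and~\hyperlink{DLP'}{(DLP$'$)} read with $\EE=\bKK$ as in Proposition~\ref{p-dobrushin}). Both the identification of the hypotheses and the reduction are handled correctly, so nothing further is needed.
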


Since it is easy to construct numerous examples of transition probabilities $Q(\xxi;\cdot)$ from~$\bKK$ to~$\KK$ which meet assumptions~\hyperlink{Fel}{(SF)}, \hyperlink{DLP'}{(DLP$'$)} and~\hyperlink{SRZ}{\rm(SRZ)}, then there are plenty of stationary processes~$\{\eta_k\}$ with compact support that satisfy the hypotheses imposed on the random input in Theorem~\ref{t-mixing-dissipation}.

\subsection{Strong recurrence, strong irreducibility, and mixing}
\label{s-SR-mixing}
The aim of this section is to prove implications~\eqref{mixing-recurrence}.  We shall say that an $E$-valued stationary process $\eeta:=\{\eta_k\}_{k\in\Z}$ (or $\eeta:=\{\eta_k\}_{k\in\Z_+}$) is {\it feebly mixing\/} if for any integers $n,m\ge1$ and any functions $f\in C_b(E^n)$ and $g\in C_b(E^m)$  we have 
\begin{equation}\label{mixing-fg-old}
	\lim_{k\to\infty}\E\bigl(f(\eeta)g(\theta_k\eeta)\bigr)=\E\,f(\eeta)\, \E\,g(\eeta),
\end{equation}
where $\theta_k\eeta=\{\eta_{j+k},j\in\Z\}$ is the shifted process, and for a function $h:E^l\to\R$ we set $h(\eeta)=h(\eta_1,\dots,\eta_l)$. We denote by~$\ssigma_{\!m}$ and~$\sigma$ the  laws of~$(\eta_1,\dots,\eta_m)$ and $\{\eta_k,k\in\Z_-\}$, respectively. 

\begin{proposition}\label{p-support}
	Let $\eeta=\{\eta_k\}_{k\in\Z}$  be an $E$-valued stationary process whose conditional probabilities satisfy Hypotheses~\hyperlink{Fel}{\rm(SF)}, \hyperlink{DLP'}{\rm(DLP$'$)}, and~\hyperlink{SR}{\rm(SR)}. Then $\eeta$ is feebly mixing. 
\end{proposition}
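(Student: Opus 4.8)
The plan is to deduce feeble mixing directly from the quantitative convergence of conditional laws established in Theorem~\ref{p-stationary-convergence}. Since the conditional probabilities of the given process $\eeta$ satisfy \hyperlink{Fel}{\rm(SF)}, \hyperlink{DLP'}{\rm(DLP$'$)}, and~\hyperlink{SR}{\rm(SR)}, the uniqueness part of that theorem shows that $\eeta$ coincides in law with the process constructed there, so estimate~\eqref{convergence-condlaw} applies with $\ssigma_{\!m}$ equal to the law of $(\eta_1,\dots,\eta_m)$.

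First I would fix integers $n,m\ge1$ and functions $f\in C_b(E^n)$, $g\in C_b(E^m)$, and rewrite the left-hand side of~\eqref{mixing-fg-old} via the tower property. Conditioning on the $\sigma$-algebra $\FF_{\le n}$ generated by $\{\eta_l,l\le n\}$ makes $f(\eta_1,\dots,\eta_n)$ measurable, while by stationarity the conditional law of the block $(\eta_{k+1},\dots,\eta_{k+m})$ given $\FF_{\le n}$ equals $Q_{k-n}^m(\widetilde\eeta_n;\cdot)$, where $\widetilde\eeta_n$ is the past $\{\eta_l,l\le n\}$ reindexed to a $0$-past. This yields
$$
\E\bigl(f(\eeta)\,g(\theta_k\eeta)\bigr)
=\E\Bigl(f(\eta_1,\dots,\eta_n)\int_{\KK^m}g(\yyy)\,Q_{k-n}^m(\widetilde\eeta_n;\dd\yyy)\Bigr).
$$

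Next I would show that the inner integral converges to the constant $\langle g,\ssigma_{\!m}\rangle=\E\,g(\eeta)$ uniformly in the past. Estimate~\eqref{convergence-condlaw} gives $\sup_{\xxi\in\bKK}\|Q_s^m(\xxi;\cdot)-\ssigma_{\!m}\|_L^*\le Ce^{-\gamma s}$, which controls integration against Lipschitz test functions. Because all the measures $Q_s^m(\xxi;\cdot)$ and $\ssigma_{\!m}$ are supported on the compact set $\KK^m$, an arbitrary $g\in C_b(E^m)$ can be approximated uniformly on $\KK^m$ by a Lipschitz function $g_\e$ with $\|g-g_\e\|_\infty<\e$; splitting the error accordingly gives
$$
\sup_{\xxi\in\bKK}\Bigl|\int_{\KK^m}g(\yyy)\,Q_s^m(\xxi;\dd\yyy)-\langle g,\ssigma_{\!m}\rangle\Bigr|
\le 2\e+\max\bigl(\|g_\e\|_\infty,\Lip(g_\e)\bigr)\,Ce^{-\gamma s},
$$
so this supremum tends to $0$ as $s\to\infty$. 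Taking $s=k-n\to\infty$ and using the boundedness of $f$, dominated convergence produces $\E(f(\eeta)g(\theta_k\eeta))\to\E\,f(\eeta)\,\E\,g(\eeta)$, which is exactly~\eqref{mixing-fg-old}.

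Given Theorem~\ref{p-stationary-convergence}, there is no serious analytic obstacle: the argument merely repackages~\eqref{convergence-condlaw}. The two points needing care are bookkeeping: identifying, through stationarity and the definitions~\eqref{conditional-law} and~\eqref{Qkm}, the conditional law of the shifted block as $Q_{k-n}^m$, and upgrading the dual-Lipschitz control in~\eqref{convergence-condlaw} to convergence against an arbitrary bounded continuous $g$, which is precisely where the compactness of $\KK^m$ and the Lipschitz-approximation step enter.
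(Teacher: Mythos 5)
Your proof is correct and follows essentially the same route as the paper: condition on the past up to time $n$, identify $\E\{g(\theta_k\eeta)\,|\,\FF_n\}$ as the integral of $g$ against $Q_{k-n}^m(\eeta^n;\cdot)$, and invoke the uniform convergence~\eqref{convergence-condlaw} from Theorem~\ref{p-stationary-convergence} to pass to the limit. The only addition is that you spell out the Lipschitz-approximation step needed to upgrade dual-Lipschitz convergence to convergence against arbitrary $g\in C_b(E^m)$ on the compact support $\KK^m$, a detail the paper leaves implicit.
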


\begin{proof}
	The main idea of the proof is well known, so that we skip the details. Let $f\in C_b(E^n)$ and $g\in C_b(E^m)$. For any $k>n$ and $\xxi\in\EE$, the conditional expectation given the past can be written as
	$$
	\E\bigl\{g(\theta_k\eeta)\,\big|\,\eeta^n=\xxi\bigr\}=\int_{E^m}Q_{k-n}^m(\xxi;\dd\vec{y}_m)g(\vec{y}_m),
	$$
	where $\vec{y}_m=(y_1,\dots,y_m)$ and $\eeta^n=\{\eta_j,j\le n\}$. Denoting by~$\{\FF_k\}_{k\in\Z}$ the natural filtration associated with~$\eeta$ and taking the conditional expectation with respect to~$\FF_n$, we obtain 
	\begin{align}
	\E\bigl(f(\eeta)g(\theta_k\eeta)\bigr)
	&=\E\bigl(f(\eeta)\,\E\bigl\{g(\theta_k\eeta)\,\big|\,\FF_n\bigr\}\bigr)\notag\\
	&=\E\biggl\{f(\eeta)\int_{E^m}Q_{k-n}^m(\eeta^n,\dd\vec{y}_m)g(\vec{y}_m)\biggr\}.\label{MV-mixing} 
	\end{align}
	In view of Theorem~\ref{p-stationary-convergence}, the integral over~$E^m$ converges (uniformly in~$\eeta^n\in\EE$) as $k\to\infty$ to~$\langle g,\ssigma_{\!m}\rangle=\E\,g(\eeta)$. Passing to the limit in~\eqref{MV-mixing} as $k\to\infty$, we arrive at~\eqref{mixing-fg-old}. 
\end{proof}

\begin{remark}
In fact, 
our argument shows  that under the assumptions of Proposition~\ref{p-support}, the rate of convergence in~\eqref{mixing-fg-old} is exponential. 
\end{remark}

Now let  $\eeta$ be a stationary process whose conditional probabilities satisfy Hypothesis~\hyperlink{Fel}{(SF)}. We shall say that $\eeta$ is {\it strongly feebly mixing\/} if, for each $m\ge1$,
\begin{equation}\label{uniform-mixing-g}
	\lim_{k\to\infty}\sup_{\xxi,g}\,\biggl|\int_{E^m}Q_k^m(\xxi;\dd \vec{y}_m)g(\vec{y}_m)-\E\,g(\eeta)\biggr|=0,
\end{equation}
where the supremum is taken over all points $\xxi\in\EE=\supp\sigma$ and all the functions $g\in C_b(E^m)$ with $\|g\|_\infty\le1$. In this case, it follows from~\eqref{MV-mixing} that, for fixed~$m$ and~$n$, the limit in~\eqref{mixing-fg-old} holds uniformly in continuous functions~$f$ and~$g$ whose absolute values are bounded  by~$1$. Thus, when property~\hyperlink{Fel}{(SF)} is satisfied, the concept of {\it strong feeble mixing\/} defined above is close to the usual concept of {\it strong mixing\/}, for which the validity of~\eqref{uniform-mixing-g} is required uniformly in~$m\ge1$; see~\cite[Definition~17.2.2]{IL1971}. Finally, let us note the argument used in~\cite{KS-TV-2025} to prove convergence in the total variation metric applies also in this case and allows one to establish~\eqref{uniform-mixing-g} for bounded measurable functions~$g$.

In view of Proposition~\ref{p-CT}, a stationary weak solution of~\eqref{stationary-RDS} is strongly feebly mixing. The following result prove an irreducibility property for such processes. 

\begin{proposition}\label{p-mixing-recurrence}
Any strongly feebly mixing $E$-valued stationary process~$\eeta$ satisfying Hypothesis~\hyperlink{DLP'}{\rm(DLP$'$)} is strongly irreducible in the following sense: 
\begin{description}
	\item[\hypertarget{SI}{(SI)}]\sl 
For any $n\in\N$ and~$\delta>0$ there is an integer $s\ge1$ such that
\begin{equation*}\label{prob-zero-supp}
	\inf_{\xxi\in\supp\sigma}Q_s^n\bigl(\xxi;\OO_\delta(\vec{\xi}_n)\bigr)>0
\end{equation*}
	for any $\vec{\xi}_n=(\xi_1,\dots,\xi_n)\in\supp\ssigma_{\!n}$, where $\sigma\in\PP(\EE)$ is defined in~\eqref{sigma-etak}. 
\end{description}
\end{proposition}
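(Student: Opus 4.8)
The plan is to read off~\hyperlink{SI}{\rm(SI)} from strong feeble mixing by testing the conditional laws $Q_s^n(\xxi;\cdot)$ against continuous functions that minorise the indicators of the target neighbourhoods $\OO_\delta(\vec{\xi}_n)$. The decisive feature of strong feeble mixing in the form~\eqref{uniform-mixing-g} is that the convergence of $\int_{E^n}Q_k^n(\xxi;\dd\vec{y}_n)\,g(\vec{y}_n)$ towards $\E\,g(\eeta)$ is uniform both in the past $\xxi\in\EE$ and in the test function~$g$ ranging over $\|g\|_\infty\le1$. This double uniformity is exactly what will allow one to extract a single integer~$s$, depending on $n$ and~$\delta$ only, that serves simultaneously for all pasts and all target points, as~\hyperlink{SI}{\rm(SI)} requires.

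First I would fix $n\ge1$ and $\delta>0$, and, for each target $\vec{\xi}_n\in\supp\ssigma_{\!n}$, choose (by an Urysohn-type construction in the metric space~$E^n$) a continuous function $g_{\vec{\xi}_n}:E^n\to[0,1]$ equal to~$1$ on $\OO_{\delta/2}(\vec{\xi}_n)$ and vanishing outside $\OO_\delta(\vec{\xi}_n)$, so that $\E\,g_{\vec{\xi}_n}(\eeta)=\langle g_{\vec{\xi}_n},\ssigma_{\!n}\rangle\ge\ssigma_{\!n}\bigl(\OO_{\delta/2}(\vec{\xi}_n)\bigr)$. The crucial preliminary is a \emph{uniform} lower bound on these masses. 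The map $\vec{\xi}_n\mapsto\ssigma_{\!n}\bigl(\OO_{\delta/2}(\vec{\xi}_n)\bigr)$ is lower semicontinuous, being the $\ssigma_{\!n}$-measure of an open box that varies lower-semicontinuously with its centre, and it is strictly positive at every point of $\supp\ssigma_{\!n}$ by definition of the support. Since $\supp\ssigma_{\!n}$ is a closed subset of the compact set $\KK^n$, hence compact, this lower semicontinuous positive function attains a positive minimum, so that
\[
c_0:=\inf_{\vec{\xi}_n\in\supp\ssigma_{\!n}}\ssigma_{\!n}\bigl(\OO_{\delta/2}(\vec{\xi}_n)\bigr)>0 ,
\]
and consequently $\E\,g_{\vec{\xi}_n}(\eeta)\ge c_0$ for every target~$\vec{\xi}_n$.

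I would then invoke strong feeble mixing with $m=n$ and $\e=c_0/2$ to obtain an integer $s\ge1$, depending only on $n$ and~$\delta$, such that
\[
\int_{E^n}Q_s^n(\xxi;\dd\vec{y}_n)\,g(\vec{y}_n)>\E\,g(\eeta)-\frac{c_0}{2}
\quad\text{for all }\xxi\in\EE\text{ and all }g\text{ with }\|g\|_\infty\le1 .
\]
Applying this to $g=g_{\vec{\xi}_n}$ and using $0\le g_{\vec{\xi}_n}\le{\mathbf1}_{\OO_\delta(\vec{\xi}_n)}$, each such integral is dominated by $Q_s^n\bigl(\xxi;\OO_\delta(\vec{\xi}_n)\bigr)$, whence
\[
\inf_{\xxi\in\EE}Q_s^n\bigl(\xxi;\OO_\delta(\vec{\xi}_n)\bigr)\ge \E\,g_{\vec{\xi}_n}(\eeta)-\frac{c_0}{2}\ge\frac{c_0}{2}>0
\quad\text{for every }\vec{\xi}_n\in\supp\ssigma_{\!n},
\]
which is precisely~\hyperlink{SI}{\rm(SI)}.

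I expect the main obstacle to be exactly the uniformity of~$s$ over the target points~$\vec{\xi}_n$: the naive argument, choosing a bump and a threshold for each fixed $\vec{\xi}_n$ separately, produces an $s$ and a lower bound that depend on~$\vec{\xi}_n$ and therefore fails to meet the quantifier order in~\hyperlink{SI}{\rm(SI)}. Overcoming it rests on two facts pulling in the same direction: the uniform positivity $c_0>0$, which comes from lower semicontinuity together with the compactness of $\supp\ssigma_{\!n}$, and the uniformity over all test functions~$g$ already built into the definition of strong feeble mixing. Hypotheses~\hyperlink{Fel}{\rm(SF)} and~\hyperlink{DLP'}{\rm(DLP$'$)} are used only to ensure that the conditional laws $Q_k^m(\xxi;\cdot)$ and the marginals $\ssigma_{\!m}$ are well defined, so that the notion of strong feeble mixing is meaningful; the substance of the argument is carried by the uniform convergence in~\eqref{uniform-mixing-g} and the compactness of the support.
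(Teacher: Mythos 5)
Your proof is correct and follows essentially the same route as the paper's: test $Q_s^n(\xxi;\cdot)$ against a continuous function $g$ with $0\le g\le 1$ supported in $\OO_\delta(\vec{\xi}_n)$ and equal to~$1$ on $\OO_{\delta/2}(\vec{\xi}_n)$, bound $Q_s^n\bigl(\xxi;\OO_\delta(\vec{\xi}_n)\bigr)$ from below by $\int Q_s^n(\xxi;\dd\vec{y}_n)g(\vec{y}_n)$, and invoke the uniform convergence~\eqref{uniform-mixing-g} to conclude that this is at least half of $\E\,g(\eeta)>0$ for~$s$ large. The one genuine addition in your argument is the compactness step producing the uniform lower bound $c_0=\inf_{\vec{\xi}_n\in\supp\ssigma_{\!n}}\ssigma_{\!n}\bigl(\OO_{\delta/2}(\vec{\xi}_n)\bigr)>0$ via lower semicontinuity of $\vec{\xi}_n\mapsto\ssigma_{\!n}\bigl(\OO_{\delta/2}(\vec{\xi}_n)\bigr)$ on the compact set $\supp\ssigma_{\!n}\subset\KK^n$; the paper instead fixes $\vec{\xi}_n$ at the outset, so its threshold $p=\E\,g(\eeta)$ and hence its~$s$ a priori depend on the target point. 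If one reads~\hyperlink{SI}{(SI)} with~$s$ required to be uniform in $\vec{\xi}_n$ (as the quantifier order suggests), your version actually supplies the uniformity that the paper's proof leaves implicit, and this is exactly where it is needed when~\hyperlink{SI}{(SI)} is compared with the single-target condition~\hyperlink{SR}{(SR)}.
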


\begin{proof}
Let us fix an integer $n\ge1$, a point $\vec{\xi}_n\in\supp\ssigma_{\!n}$, and any number~$\delta>0$. We take any  function $g\in C_b(E^n)$ supported by $\OO_{\delta}(\vec{\xi}_n)$ such that $0\le g\le 1$ and $g(\vec{y}_n)=1$ for $\vec{y}_n\in \OO_{\delta/2}(\vec{\xi}_n)$. In this case, we have 
$$
p:=\E\, g(\eeta)>0, \quad 
Q_s^n\bigl(\xxi;\OO_{\delta}(\vec{\xi}_n)\bigr)\ge \int_{E^n}Q_s^n(\xxi;\dd \vec{y}_n)g(\vec{y}_n).
$$
Combining this with~\eqref{uniform-mixing-g}, we see that 
$$
\inf_{\xxi\in\supp\sigma}Q_s^n\bigl(\xxi;\OO_{\delta}(\vec{\xi}_n)\bigr)\ge p/2,
$$
provided that $s\ge1$ is sufficiently large. This completes the proof. 
\end{proof}

\subsection{Examples of processes satisfying the hypotheses of Section~\ref{s-formulation-MR}}\label{s-examplesprocesses}
Corollary~\ref{c-process} implies the existence of many stationary processes satisfying~\hyperlink{Fel}{(SF)}, \hyperlink{DLP'}{(DLP$'$)}, and~\hyperlink{SRZ}{\rm(SRZ)}. In Subsections~\ref{ss-DTMN} and~\ref{ss-DTMA} below, we describe two classes of stationary processes possessing those properties. 

\subsubsection{Discrete-time Markovian noises}\label{ss-DTMN}
	Let $E$ be a finite-dimensional space and let $\eeta=
	\{\eta_k\}_{k\ge0}$ be a stationary Markov process in~$E$ corresponding to transition probabilites $\{P_k(\xi;\cdot),k\ge0\}$ that possess the following properties:
	\begin{itemize}\sl 
	\item the law of~$\eta_1$ has a compact support, and there is a Lipschitz continuous function $\rho:E\times E\to\R_+$ such that $P_1(\xi;\dd y)=\rho(\xi,y)\ell(\dd y)$ for any $\xi\in E$;
	\item there is a ball $B\subset E$, a point $a\in E$, an integer $m\ge1$, and a number $p>0$ such that $\rho(\xi,a)>0$ for any $\xi\in B$, and $P_m(\xi;B)\ge p$ for any $\xi\in E$. 
	\end{itemize}
In this case, Hypotheses~\hyperlink{Fel}{(SF)} and~\hyperlink{DLP'}{(DLP$'$)} are satisfied for~$\eeta$. Moreover, using Theorem~\ref{p-stationary-convergence}, it is not difficult to prove that~\eqref{uniform-mixing-g} holds with $\xxi\in\EE$, so that~$\eeta$ is strongly feebly mixing. Then, by Proposition~\ref{p-mixing-recurrence}, the process~$\eeta$ satisfies the property of strong irreducibility~\hyperlink{SI}{(SI)}. Moreover, if the second condition mentioned above is fulfilled for the point $a=0$ and some ball $B=B_E(\delta)$ with $\delta>0$, then a simple calculation based on the Kolmogorov--Chapman relation shows that Hypothesis~\hyperlink{SRZ}{(SRZ)} is also satisfied. 

\subsubsection{Discrete-time moving averages}\label{ss-DTMA}
	Let $\zzeta=\{\zeta_k\}_{k\in\Z}$ be a sequence of i.i.d.\ random variables in finite-dimensional space~$E$ whose law possesses a Lipschitz continuous density $\rho(y)$. We assume that the support of~$\rho$ is compact. Let us fix any sequence $\{a_l\}_{l\ge1}$ exponentially going to zero and define 
\begin{equation}\label{etak-zetak}
	\eta_k=\zeta_k+\sum_{l=1}^\infty a_l\zeta_{k-l}, \quad k\in\Z.
\end{equation}
Obviously $\eeta=\{\eta_k\}$ is a stationary process in~$E$. To simplify the presentation, we further assume that $\sum_l|a_l|<1$. In this case, denoting by~$\ell_\infty(E)$ the space of bounded $E$-valued sequences indexed by~$\Z_-$, we see that the linear operator $A:\ell_\infty(E)\to\ell_\infty(E)$ taking $\{\zeta_j,j\in\Z_-\}$ to the sequence~$\{\eta_k, k\in\Z_-\}$ defined by~\eqref{etak-zetak} is invertible, and the inverse operator~$B$ has the form 
	$$
	(B\xxi)_k=\xi_k+\sum_{l=1}^\infty b_l\xi_{k-l}, \quad k\in\Z_-,
	$$
	where $\{b_l,l\in\N\}$ is another  exponentially decaying sequence of real numbers. It follows that the conditional law of~$\eeta$ can be written as
	$$
	Q(\xxi;\dd y)=\rho(y-h(\xxi)), \quad 
	h(\xxi)=\sum_{l=1}^\infty a_l(B\xxi)_{1-l}=-\sum_{l=1}^\infty b_l\xi_{1-l},
	$$
	where $\xxi=(\xi_l,l\in\Z_-)\in\EE$. Since~$\rho$ is Lipschitz continuous, it follows that $Q(\xxi;\dd y)$ has a Lipschitz continuous density, and so~$\{\eta_k\}$ satisfies~\hyperlink{DLP'}{(DLP$'$)} and~\hyperlink{Fel}{(SF)}. Furthermore, since an i.i.d.\ random process satisfies~\hyperlink{SI}{(SI)}, using the fact that the processes~$\eeta$ and~$\zzeta$ can be reconstructed from each other with the help of the continuous linear operators~$A$ and~$B$, it is not difficult to conclude that~\hyperlink{SI}{(SI)} holds also for~$\eeta$. If, in addition, the support of the law of~$\zeta_k$ contains $0\in E$, then~\eqref{etak-zetak} readily implies that Hypothesis~\hyperlink{SRZ}{(SRZ)} is satisfied. 

\subsubsection{Continuous-time noise}
\label{ss-CTN}
In this section, we denote by~$J$ the interval $[0,1)\subset\R$ and by~$E$ the Lebesgue space $L^2(J)$. Let~$\{\varphi_l\}_{l\ge1}$  be an orthonormal basis in~$E$ and $\{\zeta_l\}_{l\ge1}$ be a sequence of independent scalar random variables whose laws possess Lipschitz continuous densities~$d_l:\R\to\R_+$ with respect to Lebesgue measure on~$\R$. We assume that $|\zeta_l|\le a_l$ almost surely for any $l\ge1$, where the numbers $a_l>0$ are such that $\sum_la_l^2<\infty$. Thus, the function~$d_l$ vanishes outside the interval~$[-a_l,a_l]$, and  the random series 
\begin{equation*}\label{zeta-t-phi}
\zeta(t):=\sum_{l=1}^\infty\zeta_l\varphi_l(t), \quad t\in J	
\end{equation*}
converges almost surely in~$E$. We denote by $\nu\in\PP(E)$ the law of the random variable~$\zeta$. Let us write~$\KK$ for the support of~$\nu$. This is a compact subset of~$E$. Next, we fix any number $\iota>1$ and define a distance~$\dd(\xxi,\xxi')$ on $\bKK=\KK^{\Z_-}$ by relation~\eqref{distance-EE}. 

\begin{proposition}\label{p-continuous-time}
Let the above hypotheses be fulfilled, let $g:\bKK\times E\to\R_+$ be a Lipschitz continuous function such that $g(\xxi,y)\ge c>0$ for all $\xxi\in\bKK$ and $y\in E$, and let $m(\xxi)=\int_Eg(\xxi,y)\nu(\dd y)$. Then the transition probabilities 
	\begin{equation}\label{measure-Qxi}
		Q(\xxi;\dd y)=\rho(\xxi,y)\nu(\dd y), \quad \rho(\xxi,y)=m(\xxi)^{-1}g(\xxi,y),
	\end{equation}
	satisfy Hypotheses~\hyperlink{Fel}{\rm(SF)} and~\hyperlink{DLP'}{\rm(DLP$'$)}, in which $F_n$ is the vector span of $\varphi_1,\dots,\varphi_n$, and~$F_n^\dagger$ is~$F_n^\bot$ -- the orthogonal complement to~$F_n$ in~$E$. Moreover, the densities of the  conditional measures  $Q_n(\xxi,y_n^\dagger;\cdot)$ entering~\eqref{decomposition-Q} are given by the formula
	\begin{equation}\label{conditional-density}
		\rho_n(\xxi,y_n^\dagger,y_n)=\frac{g(\xxi,y_n,y_n^\dagger)D_n(y_n)}{\int_{F_n}g(\xxi,z,y_n^\dagger)\nu_n(\dd z)},
	\end{equation}
	where $\nu_n$ is the projection of~$\nu$ to~$F_n$, and $D_n$ stands for the product of the functions $d_1,\dots,d_n$.
\end{proposition}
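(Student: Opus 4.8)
The plan is to use the product structure of the reference law~$\nu$ together with the explicit density $\rho(\xxi,y)=m(\xxi)^{-1}g(\xxi,y)$ to compute the disintegration in~\hyperlink{DLP'}{(DLP$'$)} by hand, reducing everything to elementary Lipschitz estimates. First I would record the a priori bounds. Since $g\ge c>0$ and $\nu$ is a probability measure, $m(\xxi)\ge c$ for every~$\xxi$; since $g$ is Lipschitz, it is bounded above on the compact set $\bKK\times\KK$ on which $\nu$ is concentrated, so $c\le m(\xxi)\le M_0$ for a constant~$M_0$. The crucial structural fact is that, because the $\zeta_l$ are independent, the coordinates $\langle y,\varphi_l\rangle$ are independent under~$\nu$; hence $\nu$ factorises along $E=F_n\dotplus F_n^\bot$ as $\nu=\nu_n\otimes\nu_n^\dagger$, where $\nu_n$ is the projection to~$F_n$ with density $D_n(y_n)=\prod_{l=1}^n d_l(y_l)$ relative to~$\ell_n$, and correspondingly $\KK=\KK_n\times\KK_n^\dagger$ with $\KK_n=\supp\nu_n$, $\KK_n^\dagger=\supp\nu_n^\dagger$.

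For~\hyperlink{Fel}{(SF)} I would estimate the total variation distance directly from the densities. Writing the difference $m(\xxi)^{-1}g(\xxi,\cdot)-m(\xxi')^{-1}g(\xxi',\cdot)$ as the sum of a term coming from the Lipschitz dependence of~$g$ on~$\xxi$ and a term coming from that of~$m^{-1}$ (the latter controlled through $m\ge c$, which makes $\xxi\mapsto m(\xxi)^{-1}$ Lipschitz), and integrating against~$\nu$ over $y\in\KK$, I obtain $\|Q(\xxi;\cdot)-Q(\xxi';\cdot)\|_{\mathrm{var}}\le C\,\dd(\xxi,\xxi')$. Continuity of $\xxi\mapsto Q(\xxi;\cdot)$ into $\PP(\KK)$ is then immediate, since Lipschitz continuity in total variation is stronger than weak continuity, and $Q(\xxi;\cdot)=\rho(\xxi,\cdot)\,\nu$ is supported by~$\KK$.

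For~\hyperlink{DLP'}{(DLP$'$)} the structural requirements are immediate: $\mathsf P_n$ is the orthogonal projection, so $\|\mathsf P_n\|=1$ uniformly, and $\cup_nF_n$ is dense because $\{\varphi_l\}$ is a basis. The heart of the matter is the disintegration. Substituting $\nu(\dd y)=D_n(y_n)\,\ell_n(\dd y_n)\,\nu_n^\dagger(\dd y_n^\dagger)$ into $Q(\xxi;\dd y)=m(\xxi)^{-1}g(\xxi,y)\,\nu(\dd y)$ and integrating out~$y_n$ gives $Q_n^\dagger(\xxi;\dd y_n^\dagger)=m(\xxi)^{-1}I_n(\xxi,y_n^\dagger)\,\nu_n^\dagger(\dd y_n^\dagger)$ with $I_n(\xxi,y_n^\dagger)=\int_{F_n}g(\xxi,z,y_n^\dagger)\,\nu_n(\dd z)$; taking the ratio yields exactly the claimed conditional density~\eqref{conditional-density}. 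It remains to see that $\rho_n$ is Lipschitz, and here the lower bound is decisive: since $g\ge c$ one has $I_n(\xxi,y_n^\dagger)\ge c\,\nu_n(F_n)=c>0$, so the denominator of~\eqref{conditional-density} is bounded away from zero. Combined with the Lipschitz continuity of~$g$ (hence of~$I_n$, by integrating its Lipschitz bound over the compact set~$\KK_n$) and of each~$d_l$ (hence of $D_n$ on~$\KK_n$), the quotient $\rho_n=gD_n/I_n$ is a ratio of bounded Lipschitz functions with denominator bounded below, and is therefore Lipschitz. Finally, $D_n$ already confines the support to $y_n\in\KK_n$, and since the conditional measures are only relevant for $y_n^\dagger$ in $\KK_n^\dagger$, a fixed Lipschitz cutoff in the $y_n^\dagger$-variable makes $\rho_n$ compactly supported near~$\KK$ without changing any $Q_n(\xxi,y_n^\dagger;\cdot)$.

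The main obstacle I anticipate is precisely the Lipschitz continuity of~$\rho_n$: the only genuinely delicate ingredient is the uniform positive lower bound on the normalising integral $I_n(\xxi,y_n^\dagger)$, which is exactly what the standing hypothesis $g\ge c>0$ is designed to provide, together with the bookkeeping needed to present~\eqref{conditional-density} as a bona fide Lipschitz density on~$\EE\times E$ rather than merely on the physically relevant region~$\EE\times\KK$.
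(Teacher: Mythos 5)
Your proposal is correct and follows essentially the same route as the paper: factorise $\nu=\nu_n\otimes\nu_n^\bot$ along $E=F_n\dotplus F_n^\bot$, read off $Q_n^\dagger(\xxi;\dd y_n^\dagger)=m(\xxi)^{-1}\bigl(\int_{F_n}g(\xxi,z,y_n^\dagger)\,\nu_n(\dd z)\bigr)\nu_n^\bot(\dd y_n^\dagger)$, take the ratio to obtain~\eqref{conditional-density}, and use $g\ge c$ to bound the normalising integral below and deduce the Lipschitz property of~$\rho_n$. Your explicit total-variation estimate for~\hyperlink{Fel}{(SF)} and the remark about the cutoff in the $y_n^\dagger$-variable merely flesh out steps the paper treats as immediate.
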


\begin{proof}
 The validity of~\hyperlink{Fel}{(SF)} follows immediately from~\eqref{measure-Qxi}. 
In view of~\eqref{measure-Qxi}, for any integer $n\ge1$ and arbitrary bounded continuous functions $f:F_n\to\R$ and $g:F_n^\bot \to\R$, we can write 
\begin{multline} \label{integral-fg}
	\int_Ef(y_n)g(y_n^\dagger)Q(\xxi;\dd y)\\
	=\int_{F_n^\bot}g(y_n^\dagger)\biggl\{\int_{F_n}f(y_n)\rho(\xxi,y_n,y_n^\dagger)D_n(y_n)\ell_n(\dd y_n)\biggr\}\nu_n^\bot(\dd y_n^\dagger),	
\end{multline}
where $\nu_n^\bot$ is the projection of~$\nu$ to~$F_n^\bot$, and $\ell_n$ stands for  the Lebesgue measure on~$F_n$. On the other hand, the projection of~$Q(\xxi;\cdot)$ to~$F_n^\bot$ is given by 
$$
Q_n^\dagger(\xxi;\dd y_n^\dagger)=\biggl\{\int_{F_n}\rho(\xxi,z,y_n^\dagger)\nu_n(\dd z)\biggr\}\nu_n^\bot(\dd y_n^\dagger).
$$
Combining this with~\eqref{integral-fg}, we arrive at relation~\eqref{conditional-density} for the density of the measure~$Q(\xxi;y_n^\dagger,\cdot)$. The Lipschitz continuity of~$\rho_n$ follows from similar property of the function~$g$ and the inequality $g\ge c$. Finally, the density of the union~$\cup_nF_n$ is a consequence of the construction. 
\end{proof}

\begin{corollary}\label{c-CT-example}
	In addition to the conditions of Proposition~\ref{p-continuous-time}, let us assume that $0\in \supp d_l$ for any $l\ge1$. Then Hypothesis~\hyperlink{SRZ}{\rm(SRZ)} holds for the family $\{Q(\xxi;\cdot),\xxi\in\bKK\}$. Moreover, for any $m\ge1$ and $\delta>0$, inequality~\eqref{to-zero-K} holds with $s=0$.
\end{corollary}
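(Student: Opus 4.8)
The plan is to prove the stronger assertion~\eqref{to-zero-K} with $s=0$; since $\EE=\supp\sigma\subset\bKK$, this immediately yields~\hyperlink{SRZ}{(SRZ)}. Recalling~\eqref{Qkm} and~\eqref{conditional-law}, with $s=0$ the measure $Q_0^n(\xxi;\cdot)$ coincides with the $n$-fold iterated kernel $\boldsymbol{Q}_n(\xxi;\cdot)=Q(\xxi;\dd y_1)Q(\xxi_1;\dd y_2)\cdots Q(\xxi_{n-1};\dd y_n)$, where $\xxi_j=(\xxi,y_1,\dots,y_j)$. Writing $B_\delta:=B_E(\delta)$, I would integrate this iterated expression over $\OO_\delta(\mathbf{0}_n)=B_\delta\times\cdots\times B_\delta$ from the innermost variable outward: if one can show that $Q(\xxi;B_\delta)\ge p_0$ for some constant $p_0>0$ independent of $\xxi\in\bKK$, then the innermost factor $Q(\xxi_{n-1};B_\delta)$ is bounded below by $p_0$ regardless of $\xxi_{n-1}$, so it may be pulled out, leaving an integral with one fewer factor; iterating gives $\boldsymbol{Q}_n(\xxi;\OO_\delta(\mathbf{0}_n))\ge p_0^{\,n}>0$ uniformly in $\xxi\in\bKK$, which is exactly~\eqref{to-zero-K}. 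Thus the whole proof reduces to a uniform-in-$\xxi$ lower bound on $Q(\xxi;B_\delta)$.

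For that bound I would first note that, by~\eqref{measure-Qxi}, $\rho(\xxi,y)=m(\xxi)^{-1}g(\xxi,y)$ with $m(\xxi)=\int_E g(\xxi,y)\nu(\dd y)$. Since $g$ is Lipschitz, hence continuous, on the compact set $\bKK\times\KK$, it is bounded there, say $c\le g\le C_g$ on $\bKK\times\KK$; as $\nu$ is supported by~$\KK$, this gives $m(\xxi)\le C_g$ for every $\xxi$, and therefore $\rho(\xxi,y)\ge c/C_g$ for $\nu$-a.e.\ $y$, uniformly in $\xxi$. Consequently, for every Borel set $\Gamma\subset E$ and every $\xxi\in\bKK$ one has $Q(\xxi;\Gamma)\ge (c/C_g)\,\nu(\Gamma)$; in particular $Q(\xxi;B_\delta)\ge (c/C_g)\,\nu(B_\delta)$. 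So it remains only to check that $\nu(B_\delta)>0$, i.e.\ that $0\in\supp\nu$, after which one sets $p_0=(c/C_g)\,\nu(B_\delta)$.

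The one genuinely non-trivial step is this last point, $0\in\supp\nu$, which is where the added hypothesis $0\in\supp d_l$ enters. I would argue by truncation: given $\delta>0$, use $\sum_l a_l^2<\infty$ (together with $|\zeta_l|\le a_l$) to fix $N$ with $\sum_{l>N}a_l^2<\delta^2/2$, so that the tail satisfies $\sum_{l>N}\zeta_l^2<\delta^2/2$ deterministically. On the event $\{|\zeta_l|<\delta/\sqrt{2N}\text{ for }1\le l\le N\}$ the head obeys $\sum_{l\le N}\zeta_l^2<\delta^2/2$, whence $\|\zeta\|_E^2=\sum_l\zeta_l^2<\delta^2$. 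By the independence of the $\zeta_l$ and the fact that $0\in\supp d_l$ forces $\IP\{|\zeta_l|<\e\}>0$ for every $\e>0$, this event has probability $\prod_{l=1}^N\IP\{|\zeta_l|<\delta/\sqrt{2N}\}>0$. Hence $\nu(B_\delta)=\IP\{\|\zeta\|_E<\delta\}>0$, completing the argument; the summability of $\{a_l^2\}$ is precisely what renders the tail controllable and is the crux of this final estimate.
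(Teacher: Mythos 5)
Your proof is correct, and every step checks out: the bound $m(\xxi)\le C_g:=\sup_{\bKK\times\KK}g$ (using $\supp\nu\subset\KK$) together with $g\ge c$ gives $\rho(\xxi,y)\ge c/C_g$ uniformly in $\xxi\in\bKK$, hence $Q(\xxi;\cdot)\ge (c/C_g)\,\nu$; the truncation argument correctly yields $\nu(B_E(\delta))>0$ from $0\in\supp d_l$, independence, and $\sum_l a_l^2<\infty$; and the peeling of the iterated kernel~\eqref{conditional-law} from the innermost factor outward is legitimate because each $Q(\xxi_j;\cdot)$ is supported by~$\KK$, so the concatenated pasts $\xxi_j$ stay in~$\bKK$ and the uniform bound applies at every stage, giving ${\boldsymbol Q}_n(\xxi;\OO_\delta(\mathbf{0}_n))\ge p_0^n$.

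The route differs from the paper's in how uniformity over $\xxi\in\bKK$ is obtained. The paper only records the pointwise positivity ${\boldsymbol Q}_m(\xxi;\OO_\delta(\mathbf{0}_m))>0$ for each fixed $\xxi$ (the fact that $\nu$ charges $B_E(\delta)$ is left implicit there) and then upgrades it to a uniform lower bound by a soft argument: by the portmanteau theorem and the weak continuity of $\xxi\mapsto Q(\xxi;\cdot)$ from~\hyperlink{Fel}{(SF)}, the function $\xxi\mapsto{\boldsymbol Q}_m(\xxi;\OO_\delta(\mathbf{0}_m))$ is lower semicontinuous on the compact set~$\bKK$, hence separated from zero. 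You instead extract a quantitative uniform minorization $Q(\xxi;\cdot)\ge(c/C_g)\,\nu$ directly from the explicit form~\eqref{measure-Qxi} of the conditional densities, which makes the compactness/semicontinuity step unnecessary and produces an explicit constant $p_0^n$. Your argument is more hands-on and yields an effective bound, but it leans on the specific multiplicative structure of $Q(\xxi;\cdot)$ from Proposition~\ref{p-continuous-time}; the paper's semicontinuity argument is the one that would survive in situations where only weak continuity of the kernel, and not a uniform density lower bound, is available.
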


\begin{proof}
Since the support of~$d_l$ contains zero, we see that $Q(\xxi;B_E(0,\delta))>0$ for any $\xxi\in \bKK$. Combining this with~\eqref{conditional-law}, for any $\xxi\in\bKK$ and $m\ge1$, we derive
$$
{\boldsymbol Q}_m\bigl(\xxi;\OO_\delta(\mathbf{0}_m)\bigr)>0. 
$$
By the portmanteau theorem, the function taking~$\xxi\in \bKK$ to the left-hand side of this relation is lower semicontinuous and, hence, separated from zero on any compact set. This implies the required inequality~\eqref{xi-s-transition}. 
\end{proof}

We now construct a continuous-time process whose restrictions to the intervals $J_k:=[k-1,k)$, $k\in\Z$ satisfy Hypotheses~\hyperlink{Fel}{(SF)}, \hyperlink{DLP'}{(DLP$'$)}, and~\hyperlink{SRZ}{(SRZ)}. To this end, we first use Corollary~\ref{c-process} to construct an $E$-valued stationary process~$\{\eta_k\}_{k\in\Z}$ such that the conditional law of~$\eta_1$ given the past $\{\eta_k=\xi_k,k\in\Z_-\}$ is equal to~$Q(\xxi;\cdot)$ on the support of the law of $\{\eta_k\}_{k\in\Z_-}$. Then we define a process $\eta=\{\eta(t)\}_{t\in\R}$ by the relation 
\begin{equation*}\label{definition-eta}
	\eta(t)=\eta_k(t-k)\quad\mbox{for $k\le t<k+1$, $k\in\Z$}. 
\end{equation*}
The following result follows immediately from the construction. 

\begin{proposition}\label{p-continuous-final}
Almost every trajectory of~$\eta$ is a bounded  function of time, and the following two properties hold:
	\begin{description}
		\item [\sl Periodicity.] The law of~$\eta$ regarded as a probability measure on the separable Fr\'echet space $L_{\mathrm{loc}}^2(\R)$ is invariant under the time-$1$ shift. 
		\item [\sl Regularity and recurrence.] The restrictions of~$\eta$ to the intervals~$\{J_k\}_{k\in\Z}$ form a stationary process in~$E$ that satisfy Hypotheses~\hyperlink{Fel}{\rm(SF)}, \hyperlink{DLP'}{\rm(DLP$'$)}, and~\hyperlink{SRZ}{\rm(SRZ)}, where $F_n=\lspan\{\varphi_1,\dots,\varphi_n\}$. 
	\end{description}
\end{proposition}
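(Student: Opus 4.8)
The plan is to recognise that Proposition~\ref{p-continuous-final} is essentially a repackaging, through the gluing construction, of the stationary sequence $\{\eta_k\}_{k\in\Z}$ already furnished by Corollary~\ref{c-process}, whose conditional laws are the measures $Q(\xxi;\cdot)$ of~\eqref{measure-Qxi}. First I would introduce the gluing map $\Xi:E^\Z\to L_{\mathrm{loc}}^2(\R)$ sending a sequence $\{\xi_k\}_{k\in\Z}$ to the function equal to $\xi_k(\cdot-k)$ on $[k,k+1)$, record that it is a measurable isomorphism onto its image with respect to the natural Borel structures, and note that $\eta=\Xi\{\eta_k\}$. The one structural fact I would establish at the outset is that $\Xi$ intertwines the index shift $\{\xi_k\}\mapsto\{\xi_{k+1\}}$ on $E^\Z$ with the time-$1$ shift $(\theta_1 f)(t)=f(t+1)$ on $L_{\mathrm{loc}}^2(\R)$: for $t\in[k,k+1)$ one computes $(\theta_1\Xi\{\xi_j\})(t)=\xi_{k+1}(t-k)$, which is exactly the value at $t$ of $\Xi$ applied to the shifted sequence.

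For the boundedness claim I would use only the compactness of~$\KK$. Since $\KK=\supp\nu$ is compact in $E=L^2(J)$, it is bounded, so $R:=\sup_{\xi\in\KK}\|\xi\|_E<\infty$; as $\eta_k\in\KK$ almost surely for every~$k$, the restriction of~$\eta$ to $J_k$, identified with $J$ and hence with~$E$, has $E$-norm at most~$R$. Thus almost every trajectory lies in $L_{\mathrm{loc}}^2(\R)$ with uniformly bounded $L^2$-mass on every unit window, which is the asserted boundedness in this $E$-valued setting.

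The periodicity would then be immediate from the intertwining relation. Since $\{\eta_k\}_{k\in\Z}$ is stationary, the sequences $\{\eta_k\}$ and $\{\eta_{k+1}\}$ have the same law on $E^\Z$; pushing this equality forward by~$\Xi$ and using $\Xi\{\eta_{k+1}\}=\theta_1\Xi\{\eta_k\}=\theta_1\eta$ yields $\DD(\eta)=\DD(\theta_1\eta)$, i.e. the law of~$\eta$ on $L_{\mathrm{loc}}^2(\R)$ is invariant under the time-$1$ shift.

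Finally, the regularity and recurrence part reduces to results already in hand. The restrictions $\{\eta|_{J_k}\}_{k\in\Z}$ are, up to the harmless index relabelling induced by the conventions $J_k=[k-1,k)$ and $\eta(t)=\eta_k(t-k)$ on $[k,k+1)$, exactly the stationary sequence $\{\eta_k\}$, whose conditional law of~$\eta_1$ given the past is $Q(\xxi;\cdot)$ on $\supp\sigma$ by Corollary~\ref{c-process}. Proposition~\ref{p-continuous-time} shows that this $Q$ satisfies \hyperlink{Fel}{(SF)} and \hyperlink{DLP'}{(DLP$'$)} with $F_n=\lspan\{\varphi_1,\dots,\varphi_n\}$, and Corollary~\ref{c-CT-example}, applicable because $0\in\supp d_l$ for each~$l$, shows that it satisfies \hyperlink{SRZ}{(SRZ)}; hence the restrictions satisfy all three hypotheses. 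I do not expect a serious obstacle: the only point demanding care is the bookkeeping, namely checking that $\Xi$ is genuinely a shift-intertwining measurable isomorphism and that the off-by-one between the $J_k$ convention and the definition of~$\eta$ is a pure reindexing that affects neither stationarity nor the three hypotheses.
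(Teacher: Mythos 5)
Your proposal is correct and follows essentially the same route as the paper, which simply declares the proposition to ``follow immediately from the construction'': namely, invoke Corollary~\ref{c-process} (legitimate because Proposition~\ref{p-continuous-time} and Corollary~\ref{c-CT-example} supply \hyperlink{Fel}{(SF)}, \hyperlink{DLP'}{(DLP$'$)}, and \hyperlink{SRZ}{(SRZ)} for the kernels~\eqref{measure-Qxi}), glue the resulting stationary sequence into a continuous-time process, and read off periodicity from the shift-intertwining and boundedness from the compactness of~$\KK$. You merely make explicit the bookkeeping that the paper leaves implicit.
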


Finally, we construct a continuous-time process that takes values in an arbitrary separable Hilbert space~$H$ and satisfies Hypotheses~\hyperlink{Fel}{\rm(SF)}, \hyperlink{DLP'}{\rm(DLP$'$)}, and~\hyperlink{SRZ}{\rm(SRZ)}, and  the support~$\KK$ of the law of its restriction to $J_1$ has a vector span dense in $E=L^2(J_1, H)$.  To this end, we denote by~$\{e_j\}$ an orthonormal basis in~$H$, choose independent identically distributed processes~$\{\eta^j(t),t\in\R\}$ satisfying the conclusions of Proposition~\ref{p-continuous-final}, and consider the process
\begin{equation}\label{eta-CT}
	\eta(t)=\sum_{j=1}^\infty b_j\eta^j(t)e_j,\quad t\in\R,
\end{equation}
where $\{b_j\}$ are non-zero numbers satisfying the inequality $\sum_jb_j^2<\infty$. The following result is a consequence of Proposition~\ref{p-continuous-final} and the observation that a conditional measure of a tensor product is the tensor product of conditional measures. 

\begin{proposition}\label{p-CT-process}
	Almost every trajectory of the process~$\eta(t)$ constructed above is a bounded function of time with range in~$H$. Moreover, the two properties formulated in Proposition~\ref{p-continuous-final} remain valid if we replace~$L_{\mathrm{loc}}^2(\R)$ with $L_{\mathrm{loc}}^2(\R;H)$, $E=L^2(J)$ with $L^2(J;H)$,	and choose $F_n=\lspan\{\varphi_l \otimes e_m, l+m \le n\}$. 
\end{proposition}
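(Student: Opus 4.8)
The plan is to reduce every assertion to the scalar building blocks $\eta^j$, exploiting their independence and the factorisation of conditional laws. First I would identify the one-step path space $E=L^2(J_1;H)$ with the Hilbert direct sum $\bigoplus_{j\ge1}T_j\bigl(L^2(J_1)\bigr)$, where $T_j:L^2(J_1)\to E$ is the scaled embedding $T_jf=b_jfe_j$; then $\{\varphi_l\otimes e_m\}_{l,m\ge1}$ is an orthonormal basis of~$E$, and $\bigcup_nF_n$ with $F_n=\lspan\{\varphi_l\otimes e_m:l+m\le n\}$ is dense in~$E$. Since the components are independent, the past $\xxi$ of~$\eta$ determines and is determined by the scalar pasts $\xxi^j$ (obtained by reading off the $e_j$-coefficient and dividing by~$b_j$), and the conditional law of $\eta|_{J_1}$ factorises as $Q(\xxi;\cdot)=\bigotimes_{j\ge1}(T_j)_*Q^{(j)}(\xxi^j;\cdot)$, the tensor product of the scaled scalar conditional laws from Proposition~\ref{p-continuous-final}. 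This is the \emph{conditional-measure-of-a-tensor-product} observation, and it is the backbone of the whole argument.

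Boundedness and periodicity form the routine part. For boundedness I would argue interval by interval: each $\eta^j|_{J_k}$ lies in the fixed compact set $\KK_1\subset L^2(J_1)$, so $\|\eta^j|_{J_k}\|_{L^2(J_1)}\le R$ almost surely with $R$ independent of $j,k$, whence $\|\eta|_{J_k}\|_E^2=\sum_jb_j^2\|\eta^j|_{J_k}\|_{L^2(J_1)}^2\le R^2\sum_jb_j^2<\infty$. Thus $\eta\in L^2_{\mathrm{loc}}(\R;H)$, and $\KK:=\supp\DD(\eta|_{J_1})$ is contained in the compact set $\{\sum_jb_jf_je_j:f_j\in\KK_1\}$, compactness following from continuity of the summation map on $\prod_j\KK_1$ together with $\sum_jb_j^2<\infty$. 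Periodicity is immediate: the map $(\eta^j)_j\mapsto\sum_jb_j\eta^je_j$ is measurable and commutes with the time-$1$ shift, while the joint law of the i.i.d., shift-invariant family $(\eta^j)_j$ is shift invariant.

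Next I would verify \hyperlink{DLP'}{(DLP$'$)} and \hyperlink{SRZ}{(SRZ)}. For \hyperlink{DLP'}{(DLP$'$)}, disintegrating $Q(\xxi;\cdot)$ over $F_n^\dagger=F_n^\perp$, the density in the $F_n$-directions is the \emph{finite} product $\prod_{m<n}\rho_{\,n-m}^{(m)}$ of the scalar densities provided by Proposition~\ref{p-continuous-final} (component~$m$ contributes only the modes $\varphi_1,\dots,\varphi_{n-m}$, and components $m\ge n$ live entirely in~$F_n^\perp$); a product of finitely many uniformly bounded Lipschitz densities is Lipschitz, giving \hyperlink{DLP'}{(DLP$'$)} with the stated~$F_n$. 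For \hyperlink{SRZ}{(SRZ)} I would use Corollary~\ref{c-CT-example}, by which each scalar component recurs to zero with $s=0$ and a uniform positive lower bound, and then split into head and tail: fix~$M$ with $R^2\sum_{m>M}b_m^2<\delta^2/2$ \emph{deterministically}, and force the first $M$ components to be simultaneously $\delta'$-small over the $n$ intervals, with $\delta'$ chosen so that $\sum_{m\le M}b_m^2\delta'^2<\delta^2/2$. By independence this event has probability at least $\prod_{m\le M}c_m>0$, uniformly in the past, which yields \eqref{to-zero}---indeed the stronger \eqref{to-zero-K}---with $s=0$.

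The step I expect to be the genuine obstacle is the variation part of \hyperlink{Fel}{(SF)}. Weak continuity of $\xxi\mapsto Q(\xxi;\cdot)$ is not hard, following from continuity of each factor and a tail estimate governed by $\sum_{m>M}b_m^2$. The uniform bound $\|Q(\xxi;\cdot)-Q(\xxi';\cdot)\|_{\mathrm{var}}\le C\,\dd(\xxi,\xxi')$ is far more delicate and cannot be read off from the crude subadditive estimate $\sum_m\|Q^{(m)}(\xxi^m;\cdot)-Q^{(m)}(\xxi'^m;\cdot)\|_{\mathrm{var}}$, since translating to the scalar pasts introduces the amplifying factors $b_m^{-1}$ (because $\xi^m_k=b_m^{-1}\times(\text{$e_m$-coefficient of }\xi_k)$) and the resulting series need not even converge. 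I would instead tensorise in Hellinger distance $\mathsf h$, using $\mathsf h^2(\bigotimes_m\mu_m,\bigotimes_m\nu_m)\le\sum_m\mathsf h^2(\mu_m,\nu_m)$ together with $\|\cdot\|_{\mathrm{var}}\le\sqrt2\,\mathsf h$, and bound each $\mathsf h^2(Q^{(m)}(\xxi^m;\cdot),Q^{(m)}(\xxi'^m;\cdot))$ by the square of the scalar increment. After a Cauchy--Schwarz step in the weights $\iota^k$ the factors $b_m^{-1}$ partially cancel, but one is left having to dominate the \emph{unscaled} total increment $\sum_m\|\xi^m_k-\xi'^m_k\|_{L^2(J_1)}^2$ by $\dd(\xxi,\xxi')$, which is built from the \emph{scaled} $H$-norm. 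Reconciling these two scales---determining whether square-summability of $\{b_j\}$ suffices or whether the quantitative decay used in the applications is genuinely needed, and if so carrying out the corresponding refined coupling---is the crux of the proof and the part demanding the most care.
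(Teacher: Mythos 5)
Your route is the same as the paper's: the paper disposes of this proposition in a single sentence, invoking Proposition~\ref{p-continuous-final} together with the observation that a conditional measure of a tensor product is the tensor product of conditional measures, and your treatment of boundedness, periodicity, (DLP$'$) (the density on $F_n$ is a finite product of the scalar densities~\eqref{conditional-density}, since only the components $m\le n-1$ meet~$F_n$) and (SRZ) (deterministic tail bound via $\sum_{m>M}b_m^2$ plus Corollary~\ref{c-CT-example} and independence for the head) supplies exactly the details that sentence leaves implicit, and does so correctly.

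The difficulty you isolate in the Lipschitz part of (SF) is, however, a genuine gap, and your proof does not close it; nor does the paper's one-line justification address it. The obstruction is real and can be made explicit: take $\xxi,\xxi'\in\EE$ that differ only in the $e_j$-component of~$\xi_0$, by $\delta_0$ in the scalar $L^2(J)$-norm. Then $\dd(\xxi,\xxi')=b_j\delta_0$, while $\|Q(\xxi;\cdot)-Q(\xxi';\cdot)\|_{\mathrm{var}}$ equals the total variation distance between the $j$-th scalar factors, which for a common, genuinely past-dependent density~$g$ is of order~$\delta_0$; the ratio is of order $b_j^{-1}$ and is unbounded over~$j$. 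So for the construction as literally stated (identically distributed components), the Lipschitz constant in (SF) is infinite, and no tensorisation trick (Hellinger or otherwise) can recover it; note also that faster decay of the~$b_j$ makes matters worse, not better, contrary to one of the options you leave open. What does repair the statement is to drop the requirement that the components be identically distributed and let the $j$-th conditional density depend on its past with a Lipschitz constant~$L_j$ satisfying $\sum_jL_jb_j^{-1}<\infty$ (for instance $g_j=1+\epsilon_jh$ with $\epsilon_j$ decaying fast enough), or else to weaken the Lipschitz requirement in (SF) to a modulus of continuity and track the corresponding change through the coupling argument of Section~\ref{s-proof-of criterion}. You were right to single this step out as the crux; as written, neither your argument nor the paper's completes it.
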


\section{Application to randomly forced ODE and PDE}
\label{s-applications}
Theorem~\ref{t-mixing-dissipation} directly applies to systems~\eqref{01}. In this section, we give three examples which illustrate its applications to systems~\eqref{02}.

\subsection{Chain of anharmonic oscillators coupled to heat reservoirs}
\label{s-ode}
Our first example deals with an application of the theorem to a chain of anharmonic oscillators coupled to heat reservoirs. Following~\cite{JP-1998,EPR-1999}, we fix a smooth Hamiltonian  of the form~\eqref{hamiltonian} for a ``small system'' and, after excluding the variables describing the reservoirs, write the equations of motion for the coupled system in the form\footnote{We confine ourselves to a ``linear'' chain coupled to reservoirs at the endpoints. It is not difficult to extend our analysis to more complicated geometries, similar to those studied in~\cite{CEHR-2018,raquepas-2019}.}
\begin{equation}\label{coupled-system}
	\dot q=p, \quad \dot p =-\nabla V(q)+(-\gamma_1p_1+\zeta_1(t))e_1+(-\gamma_np_n+\zeta_n(t))e_n,
\end{equation}
where $\{e_j, 1\le j\le n\}$ is the standard basis in~$\R^n$, $\gamma_1, \gamma_n$ are positive numbers, and $\zeta_1,\zeta_n$ are stochastic processes with locally square-integrable trajectories. Thus, the dissipation~$\gamma_ip_i$ and stochastic force~$\zeta_i$ enter only the equations for~$p_1$ and~$p_n$. We shall prove that the question of mixing for~\eqref{coupled-system} can be reduced to a pure control problem for a family of linear ODEs with variable coefficients. The latter can be studied with the help of well-known methods of control theory and will be investigated in a subsequent publication (see also Example~\ref{e-hamiltonian}).  

In what follows, we assume that~$V$ is twice continuously differentiable and  denote by~$v(p,q)$ the vector field entering~\eqref{coupled-system}:
\begin{equation*}\label{vector-field}
v(p,q)=(p,-\nabla_q V(q)-\gamma_1p_1e_1-\gamma_np_ne_n).
\end{equation*}
A simple calculation shows that if $(p(t), q(t))$ is a trajectory for~\eqref{coupled-system}, then 
\begin{equation}\label{dHpq}
\frac{\dd}{\dd t}H\bigl(p, q\bigr)
=-\gamma_1p_1^2-\gamma_np_n^2+\bigl(\zeta_1p_1+\zeta_np_n\bigr)
\le C\bigl(\zeta_1^2+\zeta_n^2\bigr),	
\end{equation}
where $H(p,q)=\sum_jp_j^2/2+V(q)$. Since~$\zeta_1$ and~$\zeta_n$ are locally square-integrable, we see that $H(p(t), q(t))$ does not explode in finite time. Therefore, assuming that 
\begin{equation*}\label{coercive}
	V(q)\to+\infty\quad\mbox{as}\quad  |q|\to+\infty,
\end{equation*}
we can conclude that all the solutions are defined for all $t\in\R$. We shall denote by $\varPhi^t(\cdot;\zeta):\R^{2n}\to\R^{2n}$ the map taking an initial condition $(p^0,q^0)$ to the value of the corresponding solution of~\eqref{coupled-system} at time~$t\ge0$, and by~$\overline{B}(r)$ the closed ball in~$\R^{2n}$ of radius~$r$ centred at zero. Let us impose, in addition, the following two hypotheses on the system which we examine:

\begin{description}\sl
\item [\hypertarget{BAS}{(BAS) Bounded absorbing set.}] For any $M>0$, there is $\rho>0$ such that, for every $r>0$, we can find $T_r>0$ satisfying the following property: if a measurable function $\zeta=(\zeta_1,\zeta_n):\R_+\to\R^2$ is such that 
	\begin{equation}\label{L2bound}
	\sup_{t\ge0}\int_t^{t+1}\bigl(|\zeta_1(s)|^2+|\zeta_n(s)|^2\bigr)\,\dd s\le M,
	\end{equation}
then for each $(p^0,q^0)\in \overline{B}(r)$ we have $\varPhi^t(p^0,q^0;\zeta)\in \overline{B}(\rho)$ if~$t\ge T_r$. 
\end{description}
This property is certainly satisfied if we impose the stronger condition~\eqref{as-boundedness} on the functions~$\zeta_1,\zeta_n$ and require the vector field~$v(p,q)$ to possesses a Lyapunov function for which~\eqref{lyapunov-qualified} holds. As will be shown in Proposition~\ref{p-boundedness} below, Hypothesis~\hyperlink{BAS}{(BAS)} implies the existence of a compact set~$X\subset\R^{2n}$ that absorbs the trajectories of~\eqref{coupled-system} in finite time and is invariant under the flow with an arbitrary~$\zeta$ satisfying~\eqref{L2bound}. 

\begin{description} 
\item [\hypertarget{SE}{(SE) Stable equilibrium point.}] There is $c\in\R^n$ such that 
\begin{equation}\label{converge0}
	\sup_{(p^0,q^0)\in X}\bigl|\varPhi^t(p^0,q^0;0)-(0,c)\bigr|\to0\quad\mbox{as} \quad t\to\infty.
\end{equation}
\end{description}
Since all trajectories of system~\eqref{coupled-system} with $\zeta\equiv0$  are absorbed by~$X$ in finite time, this condition implies that~$(0,c)$ is a unique globally stable equilibrium for the flow~$\varPhi^t(\cdot,0)$. A sufficient condition under which~\hyperlink{BAS}{(BAS)} and~\hyperlink{SE}{(SE)} are fulfilled will be discussed in Example~\ref{e-hamiltonian}. The following result describes some further properties of trajectories for~\eqref{coupled-system} with a deterministic driving force~$\zeta$. 

\begin{proposition}\label{p-boundedness}
	Let us assume that Hypotheses~\hyperlink{BAS}{\rm(BAS)} and~\hyperlink{SE}{\rm(SE)} are fulfilled, and deterministic measurable functions~$\zeta_1,\zeta_n$ in~\eqref{coupled-system} satisfy~\eqref{L2bound}. Then the following properties hold.
\begin{description}
\item [\sl Invariant set.] There is a compact set $X\subset \R^{2n}$  depending only on~$M$ such that, for any $(p^0,q^0)\in X$, the trajectory of~\eqref{coupled-system} issued from~$(p^0,q^0)$ satisfies the inclusion 
\begin{equation*}\label{belongX}
	\bigl(p(t),q(t)\bigr)\in X\quad\mbox{for any $t\ge0$}.
\end{equation*}
\item [\sl Absorption.]
For any $r>0$, there is an integer $T_r>0$ such that the trajectory of~\eqref{coupled-system} issued from an initial condition $(p^0,q^0)\in \overline{B}(r)$ belongs to~$X$ for $t\ge T_r$.
\end{description}
\end{proposition}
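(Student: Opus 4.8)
The plan is to treat the two assertions separately, both resting on Hypothesis~\hyperlink{BAS}{(BAS)} together with the a priori energy bound~\eqref{dHpq}; the convergence hypothesis~\hyperlink{SE}{(SE)} plays no direct role in these boundedness statements (it will be needed later, to verify global dissipation). First I would extract from~\hyperlink{BAS}{(BAS)}, applied with the given value of~$M$, a radius $\rho>0$ \emph{depending only on~$M$} and, taking $r=\rho$, an absorption time $T_\rho>0$ such that $\varPhi^t(\overline{B}(\rho);\zeta)\subset\overline{B}(\rho)$ for every $t\ge T_\rho$ and every admissible $\zeta$ (i.e.\ every $\zeta$ satisfying~\eqref{L2bound}). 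The ball $\overline{B}(\rho)$ is absorbing but not invariant, so the heart of the matter is to upgrade it to a genuinely forward-invariant compact set.

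The key estimate is~\eqref{dHpq}: integrating $\frac{\dd}{\dd t}H(p,q)\le C(\zeta_1^2+\zeta_n^2)$ and using~\eqref{L2bound} gives, for any trajectory and any $t\ge0$,
\[
H\bigl(p(t),q(t)\bigr)\le H\bigl(p(0),q(0)\bigr)+CM\,(t+1).
\]
Since $V$ is coercive, $H=\tfrac12|p|^2+V(q)$ is coercive and bounded below, so its sublevel sets are compact. Hence a trajectory issued from $\overline{B}(\rho)$ cannot travel far in bounded time: writing $h_1=\max_{\overline{B}(\rho)}H$, each such trajectory stays in the compact sublevel set $K:=\{H\le h_1+CM(T_\rho+1)\}$ on the window $[0,T_\rho]$, while for $t\ge T_\rho$ it lies back in $\overline{B}(\rho)\subset K$ by the choice of~$T_\rho$. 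I would then set
\[
X:=\overline{\bigl\{\varPhi^t(v_0;\zeta):v_0\in\overline{B}(\rho),\ t\ge0,\ \zeta\ \text{admissible}\bigr\}},
\]
the closed forward-reachable set of $\overline{B}(\rho)$. The bound above shows $X\subset K$, so $X$ is compact, depends only on~$M$, and contains $\overline{B}(\rho)$.

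The invariance $\varPhi^\tau(X;\zeta)\subset X$ is where the real work lies, and it is the step I expect to be the main obstacle. For a trajectory continued by the \emph{same} (time-shifted) noise it is immediate from the cocycle identity $\varPhi^{s+\tau}(v_0;\zeta)=\varPhi^\tau\bigl(\varPhi^s(v_0;\zeta);\theta_s\zeta\bigr)$, the shift-invariance of the class~\eqref{L2bound}, and continuity of the flow in the initial datum. The delicate issue is invariance under restarting from a point of~$X$ with a \emph{different} admissible noise: concatenating two functions obeying~\eqref{L2bound} only yields~\eqref{L2bound} with~$M$ replaced by~$2M$ on the junction interval, so the naive reachable-set argument does not close on itself. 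I would handle this by exploiting that $\rho$ and all the energy bounds above are \emph{uniform in the $L^2$-budget} (since~\hyperlink{BAS}{(BAS)} holds for every positive value of~$M$), which lets one run the construction with a single consistent constant; alternatively, under the stronger pointwise bound~\eqref{as-boundedness} the admissible class is stable under concatenation and the reachable set is invariant without any adjustment.

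Finally, the \textbf{Absorption} assertion is then immediate: given $r>0$, apply~\hyperlink{BAS}{(BAS)} to obtain $T_r>0$ (replaced by $\lceil T_r\rceil$ to make it an integer) such that $\varPhi^t(\overline{B}(r);\zeta)\subset\overline{B}(\rho)\subset X$ for all $t\ge T_r$, which is exactly the claimed inclusion. Thus both conclusions follow, the only genuinely nontrivial point being the passage from the absorbing ball supplied by~\hyperlink{BAS}{(BAS)} to the forward-invariant compact set~$X$, controlled by the energy excursion bound coming from~\eqref{dHpq}.
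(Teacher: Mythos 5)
Your argument is correct and arrives at the same set~$X$ as the paper (the forward-reachable set of the absorbing ball $\overline{B}(\rho)$ supplied by~\hyperlink{BAS}{(BAS)}), but the compactness step is handled differently. The paper restricts to times $t\in[0,T]$ with $T=T_\rho$, endows the set~$E_M$ of admissible controls on $[0,T]$ with the weak topology of $L^2(0,T;\R^2)$ (in which it is compact), and invokes continuity of $(t,p^0,q^0,\zeta)\mapsto\varPhi^t(p^0,q^0;\zeta)$ on the compact product $[0,T]\times\overline{B}(\rho)\times E_M$, so that~$X$ is compact as a continuous image of a compact set, with no closure taken. You instead confine the reachable set to a compact sublevel set of~$H$ via the excursion bound coming from~\eqref{dHpq}, \eqref{L2bound} and the coercivity of~$V$, and then close up; this is more elementary (it avoids justifying continuity of the flow under weak convergence of the forcing) at the small cost of noting that the closure inherits forward invariance. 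On the invariance step the two arguments coincide, and you are right to flag the concatenation issue: the class defined by~\eqref{L2bound} is not stable under restarting with a different admissible noise, only the budget~$2M$ survives at a junction. The paper's outline passes over this with ``the very definition of~$X$ easily implies'' invariance, so your discussion is, if anything, more careful; the resolution that genuinely closes the loop is the one you mention last, namely that under~\eqref{as-boundedness} (or for concatenations of unit-length pieces from the compact support, where a unit interval meets at most one junction) the admissible class is concatenation-stable with a fixed constant. Finally, you are correct that~\hyperlink{SE}{(SE)} is not used here; the paper's proof likewise relies only on~\hyperlink{BAS}{(BAS)}.
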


\begin{proof}
	This result is established using a well-known argument, and we only outline it. Let us recall that~$\rho>0$ is the radius of the absorbing ball defined in~\hyperlink{BAS}{(BAS)}. We denote $T\ge1$ the minimal time such that $\varPhi^t(p^0,q^0;\zeta)\in \overline{B}(\rho)$ for any $t\ge T$, $(p^0,q^0)\in \overline{B}(\rho)$, and any pair $\zeta=(\zeta_1,\zeta_n)$ satisfying~\eqref{L2bound}. Let~$E_M$ be the set of functions $\zeta\in L^2(0,T;\R^2)$ for which inequality~\eqref{L2bound} holds with the supremum taken over $t\in[0,T-1]$. We endow~$E_M$ with the weak topology and consider the map $(t,p^0,q^0,\zeta)\mapsto \varPhi^t(p^0,q^0;\zeta)$ acting from the compact set $[0,T]\times \overline{B}(\rho)\times E_M$ to the space~$\R^{2n}$. Standard theory of ODEs implies that this is a continuous map, so that the set
	$$
	X:=\bigcup_{t=0}^{T}\bigcup_{\zeta\in E_M}\varPhi^t\bigl(\overline{B}(\rho);\zeta\bigr)
	$$
	is also compact. Moreover, it contains the absorbing ball $\overline{B}(\rho)$, and the very definition of~$X$ easily implies that~$X$ is invariant under the flow~$\varPhi^t(\cdot,\zeta)$ for any choice of~$\zeta$ satisfying~\eqref{L2bound}.
\end{proof}

To formulate the main result of this subsection, we define $E=L^2(J,\R^2)$, where $J=[0,1)$, and impose the following two hypotheses on the random process~$\zeta=(\zeta_1,\zeta_n)$ (whose trajectories are assumed to be elements of~$L_{\rm{loc}}^2(\R,\R^2)$ almost surely). 

\begin{description}\sl 
	\item [\hypertarget{CP}{(CP) Compactness and periodicity}.] The law of~$\zeta$ is invariant under the time-$1$ shift, and the support of its projection to~$J$ is a compact subset $\KK\subset E$. 
	\item [\hypertarget{DR}{(DR) Decomposability and recurrence}.] The stationary process $\{\eta_k\}$ in~$E$ defined by $\eta_k(t)=\zeta(t+k-1)$ for $t\in J$ satisfies Hypothesis~\hyperlink{SRZ}{\rm(SRZ)}, in which~$\EE\subset\bKK:=\KK^{\Z_-}$ is the support of the law of~$\{\eta_l,l\in\Z_-\}$. Moreover, the conditional law $Q(\xxi;\cdot)$ of~$\eta_1$, given the past $\{\eta_l=\xi_l, l\in\Z_-\}$, is such that~\hyperlink{Fel}{\rm(SF)} and~\hyperlink{DLP'}{\rm(DLP$'$)} hold.
\end{description}
We also need a condition on the linearisation of Eqs.~\eqref{coupled-system} about its solutions~$(p(t),q(t))$, 
\begin{equation}\label{coupled-linearisation}
\dot x=y, \quad \dot y =-(D^2V)(q(t))x+(-\gamma_1y_1+\xi_1(t))e_1+(-\gamma_ny_n+\xi_n(t))e_n,
\end{equation}
supplemented with the zero initial condition:
\begin{equation}\label{IC-coupled-linearised}
	x(0)=0, \quad y(0)=0. 
\end{equation}
Here $(p(t),q(t))$ is a solution issued from the set~$X$, which was constructed in Proposition~\ref{p-boundedness}. We thus obtain a family of linear ODEs, parametrised by the initial condition $(p^0,q^0)$ and the (deterministic) right-hand side~$\zeta$ entering~\eqref{coupled-system}. Writing $\xi=(\xi_1,\xi_n)$, we impose the following hypothesis. 

\begin{description}\sl 
	\item [\hypertarget{LC}{\bf(LC) Linearised controllability}.] 
There is an open set $O\subset \R^{2n}\times E$ containing $X\times \KK$ such that, for any $((p^0,q^0),\zeta)\in O$, the associated linear system~\eqref{coupled-linearisation}, \eqref{IC-coupled-linearised} in~$\R^{2n}$ is controllable at time $t=1$ with control $\xi\in E$. 
\end{description}
Before formulating the main result of this subsection, let us mention that Hypotheses~\hyperlink{CP}{(CP)} and~\hyperlink{DR}{(DR)} are satisfied, for example, for random forces described in Subsection~\ref{ss-CTN} (see Corollary~\ref{c-CT-example} and Proposition~\ref{p-continuous-final}), whereas the controllability property~\hyperlink{LC}{(LC)} requires a separate study. A simple system for which the latter holds is given in Example~\ref{e-hamiltonian}. 

\begin{theorem}\label{t-CO}
	Let Hypotheses~\hyperlink{BAS}{\rm(BAS)}, \hyperlink{SE}{\rm(SE)}, \hyperlink{CP}{\rm(CP)}, \hyperlink{DR}{\rm(DR)}, and~\hyperlink{LC}{\rm(LC)} be fulfilled. Then the random flow of~\eqref{coupled-system} restricted to integer times is exponentially mixing in the dual-Lipschitz metric. More precisely, there is a measure $\mu\in\PP(\R^{2n})$ with compact support and a number~$\gamma>0$ such that, for any $r>0$ and a sufficiently large $C_r>0$, we have 
	\begin{equation}\label{oscillator-convergence}
	\sup_{(p^0,q^0)\in \overline{B}(r)}	\|\mu_t(p^0,q^0)-\mu\|_L^*\le C_re^{-\gamma t}, \quad t\in\Z_+,
	\end{equation}
	where we denote by~$\mu_t(p^0,q^0)$ the law at time~$t$ of the trajectory of~\eqref{coupled-system}, issued from $(p^0,q^0)\in\R^{2n}$.
\end{theorem}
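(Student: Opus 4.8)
The plan is to reduce the continuous-time system \eqref{coupled-system} to a discrete-time random dynamical system of the form \eqref{stationary-RDS} and to apply Theorem~\ref{t-mixing-dissipation}. Concretely, I would take $H=\R^{2n}$, let $E=L^2(J,\R^2)$ be the path space of the forcing on $J=[0,1)$, and set $S(v,\eta)=\varPhi^1(v;\eta)$ to be the time-one map of \eqref{coupled-system}, where the ``noise'' $\eta_k\in E$ is the restriction $\zeta(\cdot+k-1)|_J$; then $u_k:=\varPhi^k(p^0,q^0;\zeta)$ satisfies $u_k=S(u_{k-1},\eta_k)$. By Proposition~\ref{p-boundedness}, Hypotheses~\hyperlink{BAS}{(BAS)} and~\hyperlink{SE}{(SE)} provide a compact set $X\subset\R^{2n}$ that is invariant and absorbing for the flow with any admissible $\zeta$; by~\hyperlink{CP}{(CP)} the support $\KK\subset E$ of the one-step noise is compact and $S(X\times\KK)\subset X$, so \eqref{stationary-RDS} is an RDS on $X$. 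Smooth dependence of ODE solutions on data shows $S$ is $C^2$ and bounded with its first two derivatives on bounded sets (assuming $V$ smooth enough for $F$ to be $C^2$).

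It then remains to verify the hypotheses of Theorem~\ref{t-mixing-dissipation}. The noise conditions are immediate: Hypothesis~\hyperlink{DR}{(DR)} asserts precisely that $\{\eta_k\}$ obeys~\hyperlink{SRZ}{(SRZ)} and that its conditional law $Q(\xxi;\cdot)$ obeys~\hyperlink{Fel}{(SF)} and~\hyperlink{DLP'}{(DLP$'$)}, with the subspaces $F_n\subset E$ as in Subsection~\ref{ss-CTN}. For the dissipation hypothesis, I would first translate $q\mapsto q-c$ so that, by~\hyperlink{SE}{(SE)}, the free flow has its globally attracting equilibrium at the origin. Hypothesis~\hyperlink{GD}{(GD)} enters the proof of Theorem~\ref{t-mixing-dissipation} only through Step~2 of Section~\ref{s-proofMT}, where it serves to make $\|S_N(u;\mathbf{0}_N)\|_H$ uniformly small on $X$ for large $N$; this is exactly the uniform attraction $\sup_{u\in X}|\varPhi^t(u;0)|\to0$ guaranteed by~\hyperlink{SE}{(SE)}, so the verification of global controllability to points~\hyperlink{GCP}{(GCP)} goes through verbatim with~\hyperlink{SE}{(SE)} in place of~\hyperlink{GD}{(GD)}.

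The conceptual core is approximate linearised controllability~\hyperlink{ALC}{(ALC)}. Since $H=\R^{2n}$ is finite-dimensional, I take the determining subspace $G=H$, for which \eqref{determining} holds trivially (cf.\ Remark~\ref{r-determining}); thus~\hyperlink{ALC}{(ALC)} reduces to surjectivity of $D_\eta S(v,\eta):E\to\R^{2n}$ on an open set $O\supset X\times\KK$. Differentiating the flow with respect to the forcing shows that, for $\xi\in E$, the vector $D_\eta S(v,\eta)\xi$ is exactly the endpoint $(x(1),y(1))$ of the linearised system \eqref{coupled-linearisation} with zero initial data \eqref{IC-coupled-linearised} and control $\xi$; hence $D_\eta S(v,\eta)$ is surjective iff that linear system is controllable at $t=1$, which is precisely Hypothesis~\hyperlink{LC}{(LC)}. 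With~\hyperlink{ALC}{(ALC)} and~\hyperlink{DLP'}{(DLP$'$)} established, the construction of the local squeezing map via the Moore--Penrose pseudo-inverse (Lemma~\ref{l-right-inverse} and Step~3 of Section~\ref{s-proofMT}) applies unchanged, yielding~\hyperlink{LAC}{(LAC)} and~\hyperlink{DLP}{(DLP)} for the lifted system. Following the proof of Theorem~\ref{t-mixing-dissipation} (with~\hyperlink{SE}{(SE)} replacing~\hyperlink{GD}{(GD)} in Step~2), Theorem~\ref{t-mixing-kantorovich} produces a stationary measure $\mmu$ on $X\times\EE$, and $\mu=(\Pi_X)_*\mmu$ (supported on the compact set $X$) satisfies \eqref{oscillator-convergence} for every deterministic initial point in $X$.

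Finally I would remove the restriction $u_0\in X$. Given $r>0$, boundedness of the noise makes \eqref{L2bound} hold almost surely, so by the absorption part of Proposition~\ref{p-boundedness} every trajectory started in $\overline{B}(r)$ lies in $X$ for $t\ge T_r$ (a fixed integer), while the noise past $\eeta_k$ remains distributed as $\sigma$ and hence stays in $\EE$. Passing to the lifted Markov process $U_k=(u_k,\eeta_k)$, defined on a compact invariant set $X_r\times\EE$ with $X_r\supset\overline{B}(r)$ built as in Proposition~\ref{p-boundedness}, the state $U_{T_r}$ lies in $X\times\EE$ almost surely; the Markov property at time $T_r$ together with the mixing bound \eqref{mixing-RDS} on $X\times\EE$ gives $\|\DD(U_t)-\mmu\|_L^*\le Ce^{-\gamma(t-T_r)}$, and projecting to the first component yields \eqref{oscillator-convergence} with $C_r$ absorbing the factor $e^{\gamma T_r}$ and the transient $t<T_r$. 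I expect the main obstacle to be precisely this non-Markovian step: because $u_{T_r}$ is correlated with the future forcing, one cannot simply ``restart'' the scalar dynamics, and the argument must be run on the lifted space where the Markov property is available and where the controllability hypothesis~\hyperlink{LC}{(LC)}, known only on $X$, nonetheless suffices, since two coupled trajectories come close only after both have been absorbed into $X$.
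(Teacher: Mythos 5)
Your proposal is correct and follows essentially the same route as the paper: reduce to the discrete-time RDS on the invariant compact set $X$, check that (SE), (LC), and (DR) supply (GD), (ALC), (SRZ), (DLP$'$) respectively, apply Theorem~\ref{t-mixing-dissipation}, and then extend from $X$ to $\overline{B}(r)$ via the absorption property of Proposition~\ref{p-boundedness}. You supply more detail than the paper on two points it leaves implicit — that (SE) substitutes for the globally stated (GD) because the latter is only used on $X$ in Step~2 of Section~\ref{s-proofMT}, and that the final restart at time $T_r$ must be performed on the lifted Markov process rather than on the non-Markovian dynamics in $\R^{2n}$ — both of which are accurate refinements rather than deviations.
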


\begin{proof}
We claim that Theorem~\ref{t-mixing-dissipation} is applicable to the random flow of~\eqref{coupled-system}, restricted to integer times and the invariant subset~$X$. Indeed, \hyperlink{GD}{(GD)} is a consequence of~\eqref{converge0}, and~\hyperlink{ALC}{(ALC)} and~\hyperlink{SRZ}{(SRZ)} are postulated in~\hyperlink{LC}{(LC)} and~\hyperlink{DR}{(DR)}, respectively. The latter condition also implies that the properties of Hypothesis~\hyperlink{DLP'}{(DLP$'$)} are satisfied for $F_n=E$. Thus, all the conditions of Theorem~\ref{t-mixing-dissipation} holds, so that we have inequality~\eqref{oscillator-convergence}, in which~$\overline{B}(r)$ is replaced with~$X$. Finally, the validity of~\eqref{oscillator-convergence} for an arbitrary~$r>0$ follows from the fact that~$X$ absorbs all the trajectories issued from~$\overline{B}(r)$ at some positive time~$T_r$; see the second point of Proposition~\ref{p-boundedness}. This completes the proof of the theorem.
\end{proof}

Theorem~\ref{t-CO} ensures the convergence of the restrictions of trajectories to integer times. It is not difficult to prove that, for any $s\in(0,1)$, the restrictions of trajectories to the times $s+\Z_+$ also converge to a limiting measure~$\mu_s\in\PP(\R^{2n})$. Since the corresponding argument is well known and is exactly the same as in the case of the Navier--Stokes system, we discuss it briefly in the next subsection. We conclude with an example for which Hypotheses~\hyperlink{BAS}{(BAS)} \hyperlink{CL}{(CL)}, and~\hyperlink{SE}{(SE)} are satisfied. 

\begin{example}\label{e-hamiltonian}
Let us assume that the potential~$V$ entering~\eqref{hamiltonian} is representable in the form 
\begin{equation}\label{V-representation}
		V(q)=\sum_{i=1}^n\bigl(a_iq_i^2+F_i(q_i)\bigr)+\sum_{i=1}^{n-1}b_i(q_i-q_{i+1})^2,\quad q\in\R^n,
\end{equation}
where $a_i$ and~$b_i$ are positive numbers, and $F_i\in C^2(\R)$ are some functions with bounded derivatives. We claim that~\hyperlink{BAS}{(BAS)} holds. To see this, we denote by~$V_0(q)$ the sum of the quadratic terms in~\eqref{V-representation} (that is, the expression in the right-hand side of~\eqref{V-representation} with $F_i\equiv0$) and write~$H_0(p,q)$ for the corresponding Hamiltonian. The damped Hamiltonian system associated  with~$H_0$ has the form 
\begin{equation}\label{linear-damped}
\dot q=p, \quad \dot p=-Dq-\gamma_1p_1e_1-\gamma_np_n e_n,	
\end{equation}
where $D>0$ is the symmetric matrix corresponding to the quadratic form~$V_0$. Using  the inequalities $\gamma_1,\gamma_n>0$, it is easy to check that $(p,q)=(0,0)$ is a stable equilibrium for~\eqref{linear-damped}; see the argument below applied in the verification of Hypothesis~\hyperlink{LC}{(LC)}. Therefore there is a positive-definite quadratic form $L_0(p,q)$ such that 
\begin{equation*}\label{nabla-L0}
\langle \nabla L_0(p,q),v_0(p,q)\rangle\le -\delta(|p|^2+|q|^2), 
\quad (p,q)\in\R^{2n},	
\end{equation*}
where $\delta>0$ is a number, and~$v_0$ is the linear vector field in~\eqref{linear-damped}. Since the difference $V-V_0$ has bounded first-order partial derivatives, it follows that 
\begin{equation*}\label{nablaL0vbound}
\langle \nabla L_0(p,q),v(p,q)\rangle\le -\frac{\delta}{2}(|p|^2+|q|^2)	
\end{equation*}
outside a sufficiently large ball. This readily implies the existence of an absorbing ball and of an absorbing invariant set~$X\subset\R^{2n}$.

To ensure the validity of~\hyperlink{(SE}{(SE)} and~\hyperlink{LC}{(LC)}, we further assume that 
\begin{equation}\label{lower-bound-Fi}
F_i'(0)=0,\quad	\inf_{s\in\R}F_i''(s)>-2a_i\quad\mbox{for any $1\le i\le n$}. 
\end{equation}
To check~\hyperlink{SE}{(SE)}, note that if $(p,q)$ is a trajectory of~\eqref{coupled-system} with $\zeta_1\equiv\zeta_n\equiv0$, 
\begin{equation}\label{homogeneous-chain}
	\dot q=p,\quad\dot p=-\nabla V(q)-\gamma_1p_1e_1-\gamma_np_ne_n, 
\end{equation}
then $H(p(t),q(t))$ is a non-decreasing function of time; see~\eqref{dHpq}. It follows that the Hamiltonian~$H$ must be constant on the $\omega$-limit set of a solution. In particular, for any trajectory~$(p,q)$ of~\eqref{homogeneous-chain} lying on such an $\omega$-limit set, we have $p_1\equiv p_n\equiv0$. We claim that $p\equiv q\equiv0$ is the only such trajectory. This will imply the validity of~\hyperlink{SE}{(SE)} by a simple compactness argument. 

The relation $\dot q_1=p_1$ (which is the first equation in~\eqref{homogeneous-chain}) implies that~$q_1$ is constant in time. The equation 
$$
0\equiv\dot p_1=-\p_{q_1}V(q)=2b_1q_2-2(a_1+b_1)q_1-F_1'(q_1)
$$ 
implies that~$q_2$ is also constant in time, so that $p_2\equiv0$. Arguing by induction, we see that all~$p_i$ vanish identically and all~$q_i$ are constant. It remains to prove that $q_i=0$. The second equation in~\eqref{homogeneous-chain} imply that 
$$
2a_iq_i+2b_{i-1}(q_i-q_{i-1})+2b_i(q_i-q_{i+1})+F_i'(q_i)=0, \quad 1\le i\le n,
$$
where $b_0=q_0=b_n=q_{n+1}=0$. A simple analysis based on~\eqref{lower-bound-Fi} shows that $q_1=\cdots=q_n=0$ is the only solution of this system. 

Finally, the verification of the linearised controllability~\hyperlink{LC}{(LC)} is based on well-known techniques in the control theory of ODEs, using the propagation of perturbation via the derivative of the potential~$V$. Since the corresponding argument is technically more involved, we shall give it in a subsequent publication devoted to a study of more general chains of oscillators and rotators.  

Thus, Theorem~\ref{t-CO} applies to the chain of anharmonic oscillators~\eqref{coupled-system} with a potential of the form~\eqref{V-representation}, provided that the random forces~$\zeta_1,\zeta_n$ are independent and belong to the class of processes described in Subsection~\ref{ss-CTN}.
\end{example}

\subsection{Two-dimensional Navier--Stokes system}
\label{s-nse}

Let us consider the Navier--Stokes system~\eqref{NS} in a bounded domain~$D\subset\R^2$ with a smooth boundary~$\p D$. Let us impose the no-slip boundary condition~\eqref{non-slip}, 
define the usual functional space 
\begin{equation}\label{space-H}
H=\{u\in L^2(D,\R^2):\mbox{$\diver u=0$ in $D$, $\langle u,\nnn\rangle=0$ on~$\p D$}\},	
\end{equation}
and denote by~$\Pi$ the orthogonal projection in $L^2(D,\R^2)$ onto~$H$. Excluding the pressure from~\eqref{NS} with the help of the standard argument, we write the problem in question as a nonlocal PDE in~$H$,
\begin{equation}\label{NS-reduced}
	\p_tu+\nu Lu+B(u)=\eta(t), \quad u(t)\in H,
\end{equation}
supplemented with the initial condition~\eqref{NS-IC}. Here $L=-\Pi\Delta$ is the Stokes operator, $B(u)=\Pi(\langle u,\nabla\rangle u)$ is a quadratic map, $u_0\in H$ is an initial function, and~$\eta$ is a bounded stationary stochastic process in~$H$ of the form~\eqref{eta-CT}, in which $\{e_j\}$ is an orthonormal basis in~$H$ composed of the eigenfunctions of~$L$,  $b_j\ne0$ are some numbers such that $\sum_jb_j^2<\infty$, and $\eta^j=\{\eta^j(t), t\in\R\}_{j\ge1}$ are i.i.d.\ stochastic processes that are almost surely bounded by~$1$ and possess the properties stated in Proposition~\ref{p-continuous-final}. 


Let us denote $J=[0,1)$ and introduce random functions~$\eta_k^j:J\to\R$ by the formula $\eta_k^j(t)=\eta^j(t+k-1)$. Writing
$$
\eeta_k=\sum_{j=1}^\infty b_j\eta_k^je_j, 
$$
we obtain a random variable in $E:=L^2(J,H)$, and the construction implies that $\{\eeta_k\}_{k\ge1}$ is a stationary sequence in~$E$. Denoting by~$\KK$ the Hilbert cube $\{v\in E:|(v,e_j)|\le b_j\mbox{ for $j\ge1$}\}$, we see that $\eeta_k\in\KK$ with probability~$1$. 

For any initial condition $u_0\in H$, Equation~\eqref{NS-reduced} has a unique solution $u(t;u_0)$ that belongs to the space $L_{\mathrm{loc}}^2(\R_+,V)\cap C(\R_+,H)$ almost surely and satisfies the initial condition~\eqref{NS-IC}. Let us denote by $\{u_k(v),k\in\Z_+\}$ the restrictions of~$u(\cdot;v)$ to integer times. The following theorem describes the asymptotic behaviour of~$u_k$ as $k\to\infty$.

\begin{theorem}\label{t-NS}
	Under the above hypotheses, there is a measure $\mu\in\PP(H)$  with compact support and a number $\gamma>0$ such that, for any $R>0$, we have
	\begin{equation}\label{EC-NS}
		\sup_{v\in B_H(R)}\bigl\|\DD\bigl(u_k(v)\bigr)-\mu\bigr\|_L^*\le C_Re^{-\gamma k}, \quad k\ge0,
	\end{equation}
	where $C_R>0$ depends only on~$R$. 
\end{theorem}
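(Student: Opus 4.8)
The plan is to verify that Theorem~\ref{t-mixing-dissipation} applies to the time-$1$ map $S:H\times E\to H$, $E=L^2(J,H)$, of the reduced system~\eqref{NS-reduced}, restricted to a suitable compact invariant set $X$, and then to promote the resulting mixing on $X$ to the convergence~\eqref{EC-NS} on every ball $B_H(R)$ by a finite-time absorption argument. I begin by recording the standard analytic facts about~\eqref{NS-reduced} for the 2d Navier--Stokes system: for forces with values in the compact set~$\KK$ the time-$1$ map is well defined, $C^2$-smooth, and bounded together with its first two derivatives on bounded subsets of $H\times E$; moreover, by parabolic smoothing it defines a continuous map from $H\times E$ into the standard energy space~$V\subset H$ (divergence-free fields in $H_0^1$), which is compactly embedded in~$H$.

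Next I would construct the compact invariant absorbing set~$X$. Multiplying~\eqref{NS-reduced} by~$u$ and using $\langle B(u),u\rangle=0$ together with the Poincar\'e inequality $\|u\|_V^2\ge\la_1\|u\|_H^2$, where $\la_1>0$ is the first eigenvalue of the Stokes operator~$L$, one obtains $\frac{\dd}{\dd t}\|u\|_H^2+\nu\la_1\|u\|_H^2\le C$ for every force taking values in~$\KK$, whence a ball in~$H$ that absorbs all trajectories; a further energy estimate bounds $\|u\|_V$ in terms of the $H$-bound, producing an absorbing ball in~$V$ that is precompact in~$H$. Taking the closure of its forward images over one unit of time yields a compact set~$X$ with $S(X\times\KK)\subset X$, exactly as in the construction of Proposition~\ref{p-boundedness}, and the same estimates show that any trajectory issued from $B_H(R)$ enters~$X$ after a time $T_R$ depending only on~$R$. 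The noise hypotheses~\hyperlink{Fel}{(SF)}, \hyperlink{DLP'}{(DLP$'$)}, and~\hyperlink{SRZ}{(SRZ)} then hold by construction, since the process $\eta(t)=\sum_j b_j\eta^j(t)e_j$ is precisely of the form~\eqref{eta-CT}: its restrictions to the unit intervals satisfy these three properties by Proposition~\ref{p-CT-process}, with $F_n=\lspan\{\varphi_l\otimes e_m:l+m\le n\}$. Finally, the energy identity with $\eta\equiv0$ gives $\|u(t)\|_H\le e^{-\nu\la_1 t}\|u_0\|_H$, so~\hyperlink{GD}{(GD)} holds with $k=1$ and $a=e^{-\nu\la_1}<1$.

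It remains to verify~\hyperlink{ALC}{(ALC)}. The smoothing of~$S$ lets me take for~$G$ the span of the first~$N$ eigenfunctions of~$L$ with $N$ large; by Remark~\ref{r-determining} this~$G$ is determining. Since the force enters~\eqref{NS-reduced} additively and the control $\zeta$ ranges over all of $E=L^2(0,1;H)$, the operator $D_\eta S(u,\eta)$ sends~$\zeta$ to $w(1)$, where $w$ solves the linearised equation $\p_t w+\nu Lw+B'(u)w=\zeta$ with $w(0)=0$. Given any target $h\in D(L)$, I prescribe a smooth path $w(\cdot)$ with $w(0)=0$, $w(1)=h$, and $w(t)\in D(L)$, and set $\zeta:=\p_t w+\nu Lw+B'(u)w\in L^2(0,1;H)$; then $D_\eta S(u,\eta)\zeta=h$. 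As $D(L)$ is dense in~$H$, the image of $D_\eta S(u,\eta)$ is dense, so its closure contains~$G$, giving~\hyperlink{ALC}{(ALC)}. With all hypotheses checked, Theorem~\ref{t-mixing-dissipation} produces a measure $\mu\in\PP(H)$ with $\supp\mu\subset X$ and the bound~\eqref{expoconv} uniformly over $v\in X$. For general $v\in B_H(R)$ the trajectory lies in~$X$ after time~$T_R$, so $\DD(u_{T_R}(v))$ is supported in~$X$; applying the Markov property of the associated $U$-process together with the bound uniform over $X\times\EE$ yields~\eqref{EC-NS} with $C_R$ absorbing the factor $e^{\gamma T_R}$ (and the trivial bound handling $k<T_R$).

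The main obstacle is the analytic input: establishing that~$S$ is $C^2$ and smoothing with the required uniform bounds, and building the compact invariant absorbing set~$X$ that is absorbing uniformly over forces valued in~$\KK$. By contrast, the controllability step is comparatively easy, because the noise excites every mode of~$H$, so the full distributed control makes the linearised reachable set dense, and~\hyperlink{ALC}{(ALC)} reduces to the elementary inversion of the linearised flow against a prescribed smooth path.
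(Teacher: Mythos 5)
Your proposal is correct and follows essentially the same route as the paper: reduce to a compact invariant absorbing set~$X$, verify the hypotheses of Theorem~\ref{t-mixing-dissipation} for the time-$1$ map (with \hyperlink{GD}{(GD)} from the energy dissipation, \hyperlink{DLP'}{(DLP$'$)} and \hyperlink{SRZ}{(SRZ)} from Proposition~\ref{p-CT-process}, and \hyperlink{ALC}{(ALC)} from the fact that the additive control has full range in~$H$), and then pass from~$X$ to~$B_H(R)$ by finite-time absorption. You merely spell out in more detail the steps the paper leaves as standard (the construction of~$X$ and the explicit inversion of the linearised flow against a prescribed path).
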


\begin{proof}
	Since the norms of trajectories of~$\eta(t)$ are bounded by a universal number for all $t\in \R$, a standard argument shows that there is a compact absorbing set $X\subset H$ that is invariant under the random flow; cf.\ \cite[Section~1.6]{BV1992}. Thus, it suffices to prove inequality~\eqref{EC-NS} in which the ball~$B_H(R)$ is replaced with~$X$. To this end, we shall use Theorem~\ref{t-mixing-dissipation}. Namely, denoting by~$S:H\times E\to H$ the time-$1$ shift along trajectories of Equation~\eqref{NS-reduced}, we see that 
	\begin{equation}\label{NS-discretised}
		u_k=S(u_{k-1},\eeta_k), \quad k\ge1. 
	\end{equation}
	Hence, it suffices to check the validity of Hypotheses~\hyperlink{GD}{(GD)}, \hyperlink{ALC}{(ALC)}, \hyperlink{DLP'}{(DLP$'$)}, and~\hyperlink{SRZ}{(SRZ)}. The first of them follows from a well-known dissipation property of the Navier--Stokes system and is valid with $k=1$. The validity of~\hyperlink{DLP'}{(DLP$'$)} and~\hyperlink{SRZ}{(SRZ)} is the content of Proposition~\ref{p-CT-process}. Finally, to check~\hyperlink{ALC}{(ALC)}, it suffices to prove that the linearised Navier--Stokes system 
	\begin{equation}
		\p_tv+\nu Lv+B(u,v)+B(v,u)=\zzeta(t), \quad v(0)=0
	\end{equation}
	is approximately controllable at time~$1$. Here $u\in \HH:=L^2(J,V)\cap C(J,H)$	is a reference trajectory of~\eqref{NS-reduced} associated with an initial condition $u_0\in X$ and a right-hand side~$\eta\in\bKK$. The required property follows from the observations that the control~$\zzeta$ has a full range in~$H$ and that the reference trajectories~$u$ are taken from a bounded ball in~$\HH$.  This completes the proof of the theorem. 
	\end{proof}

Theorem~\ref{t-NS} proves the convergence of solutions for the Navier--Stokes system as the times goes to~$+\infty$ along integer values. Let us show that, for any $s\in(0,1)$, the measures~$\DD(u(s+k;v))$ exponentially converge to a limit~$\mu_s$ in the dual-Lipschitz metric. Indeed, let $\{(v_k,\xxi_k),k\in\Z_+\}$ be a stationary weak solution of~\eqref{NS-discretised} (see Section~\ref{ss-RDS}). Then $\DD(v_k)\equiv\mu$, and the process $\{\xxi_k,k\in\Z_+\}$ is distributed as $\{\eeta_k,k\in\Z_+\}$. Let $\xi(t)$ be the corresponding $H$-valued process (so that $\xi(t)=\xxi_k(t-k+1)$ for $t\in[k-1,k)$) and let~$w(t)$, $t\ge0$ be a solution of~\eqref{NS-reduced} with~$\eta$ replaced by~$\xi$ such that $w(0)=v_0$. Then $w(k)=v_k$ for $k\in\Z_+$. For $v\in H$, let $u^\xi(t;v)$ be a solution of~\eqref{NS-reduced} with $\eta\equiv\xi$ such that $u^\xi(0;v)=v$. Then $\DD(u^\xi(t;v))=\DD(u(t;v))$, where, as before, $u(t;v)$ solves~\eqref{NS-reduced} and equals~$v$ at $t=0$. Relation~\eqref{EC-NS} combined with a simple result on  preservation of the weak convergence of measures under Lipschitz maps (e.g., see \cite[Lemma~3.2]{shirikyan-jems2021})  implies that
\begin{equation*}
		\sup_{v\in B_H(R)}\bigl\|\DD(u(t;v))-\DD(w(t))\bigr\|_L^*\le C_R'e^{-\gamma t}, \quad t\ge0,
\end{equation*}
where $C_R'>0$ depends only on~$R$. Obviously, for $t=s+k$ with $k\in\Z_+$ and $0<s<1$, the measures $\DD(w(s+k))$ do not depend on~$k$. Denoting that measure by~$\mu_s\in\PP(H)$, we obtain the exponential convergence of $\DD(u(s+k;v))$ to it as $k\to\infty$. 

In conclusion, we note that Theorem~\ref{t-NS} remains valid for a kick force of the form~\eqref{kick-force}, provided that the stationary random process~$\{\eta_k\}_{k\in\Z}$ satisfies appropriate non-degeneracy and decomposability conditions. The corresponding result is similar to that discussed in the next subsection for the Ginzburg--Landau equation. 

\subsection{Complex Ginzburg--Landau equation}
\label{s-cgl}
To simplify the presentation, we shall confine ourselves to the case of a kick force. However, the result remains valid for continuous-time forces of the form~\eqref{eta-CT} under the same conditions as in Section~\ref{s-nse}.

Let $D\subset \R^d$, $1\le d\le 4$ be a bounded domain with a smooth boundary. We consider the PDE
\begin{equation}\label{cgl}
	\p_tu-(\nu+i)\Delta u+ic|u|^{2s}u=\eta(t,x), \quad x\in D,
\end{equation}
supplemented with the boundeaty condition~\eqref{non-slip}. Here~$\nu$ and~$c$ are positive numbers, and~$s\ge1$ is an integer such that $s\le \frac{2}{d-2}$ for $d=3\mbox{ or }4$. In this case, problem~\eqref{cgl}, \eqref{non-slip} with $\eta\equiv0$ is well posed in the Sobolev space~${\mathscr H}:=H_0^1(D,\C)$, regarded as a real Hilbert space with the inner product 
$$
(u,v)=\Re\int_D\langle \nabla u,\nabla\bar v\rangle\dd x.
$$
The corresponding phase flow in~${\mathscr H}$ possesses a Lyapunov function,
\begin{equation}\label{integrals}
\HH(u)=\int_D\Bigl(\frac12|\nabla u|^2+\frac{c}{2s+2}|u|^{2s+2}\Bigr)\dd x,
\end{equation}
which satisfies the inequality
\begin{align}
\frac{\dd}{\dd t}\HH(u(t))&\le -\nu\|\Delta u\|^2-\nu c\bigl(|u|^{2s},|\nabla u|^2\bigr),\label{lyapunov-cgl}
\end{align}
where $(\cdot,\cdot)$ and  $\|\cdot\|$ stand for the $L^2$ real inner product and the corresponding norm in~$L^2(D;\C)$. Concerning the random force~$\eta$, we assume that it has the form~\eqref{kick-force}, in which~$\{e_j\}$ is the complete set of $L^2$-normalised eigenfunctions of the Dirichlet Laplacian $-\Delta$ corresponding to the eigenvalues $\alpha_j$ indexed in the increasing order, $\eta_k^j$ are real-valued random variables such that $|\eta_k^j|\le1$ with probability~$1$, and~$\{b_j\}$ are positive numbers such that 
\begin{equation*}\label{finite-B1}
	\sum_{j=1}^\infty \alpha_jb_j^2<\infty. 
\end{equation*}
These conditions ensure that~$\{\eta_k\}$ are ${\mathscr H}$-valued random variables whose law is supported by a compact subset $\KK\subset {\mathscr H}$. Thus, denoting by $S:{\mathscr H}\to {\mathscr H}$ the time-$1$ shift along trajectories of~\eqref{cgl} with~$\eta\equiv0$ and writing $u_k=u(k)$, we obtain the relation
\begin{equation}\label{rds-cgl}
	u_k=S(u_{k-1})+\eta_k, \quad k\ge1. 
\end{equation}
This is a particular case of the RDS~\eqref{stationary-RDS}, in which $H=E={\mathscr H}$. The following theorem provides sufficient conditions under which the results of Section~\ref{s-MR} can be applied to~\eqref{rds-cgl}.

\begin{theorem}\label{t-cgl}
In addition to the above conditions, assume that~$\{\eta_k\}_{k\in\Z}$ is a stationary random process that satisfies  Hypotheses~\hyperlink{Fel}{\rm(SF)},  \hyperlink{SRZ}{\rm(SRZ)}, and~\hyperlink{DLP'}{\rm(DLP$'$)} with $F_n=\lspan\{e_j,1\le j\le n\}$ and $F_n^\dagger=F_n^\bot$, where the orthogonal complement is taken in~${\mathscr H}$. Then there is a measure $\mu\in \PP({\mathscr H})$ with compact support and a number $\gamma>0$ such that, for any $R>0$, inequality~\eqref{EC-NS} holds with $H={\mathscr H}$ and a constant~$C_R$ depending only on~$R$. 
\end{theorem}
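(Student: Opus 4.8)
The plan is to follow the scheme used for the Navier--Stokes system in Theorem~\ref{t-NS}: regard \eqref{rds-cgl} as a particular case of~\eqref{stationary-RDS} with $H=E={\mathscr H}$ and $\tilde S(u,\eta)=S(u)+\eta$, and verify the four structural hypotheses \hyperlink{GD}{(GD)}, \hyperlink{ALC}{(ALC)}, \hyperlink{DLP'}{(DLP$'$)}, and~\hyperlink{SRZ}{(SRZ)} of Theorem~\ref{t-mixing-dissipation} (Hypothesis~\hyperlink{Fel}{(SF)} being assumed). First I would construct a compact invariant absorbing set $X\subset{\mathscr H}$. Since $|\eta_k^j|\le1$ and $\sum_j\alpha_jb_j^2<\infty$, the kicks lie in a fixed compact set $\KK\subset{\mathscr H}$; using the Lyapunov function $\HH$ of~\eqref{integrals} together with the dissipation inequality~\eqref{lyapunov-cgl}, one obtains a bounded absorbing set in ${\mathscr H}$, and the parabolic smoothing of the time-$1$ map $S$ (the semigroup generated by $(\nu+i)\Delta$ is analytic since $\nu>0$, so $S$ maps bounded subsets of ${\mathscr H}$ into bounded subsets of $H^2\cap H_0^1$, which is compactly embedded in ${\mathscr H}$) upgrades it to a compact invariant set $X$. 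As in the Navier--Stokes case, this reduces the claim to establishing~\eqref{EC-NS} with $B_{\mathscr H}(R)$ replaced by $X$.

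Two of the remaining hypotheses come essentially for free. Hypotheses~\hyperlink{DLP'}{(DLP$'$)} and~\hyperlink{SRZ}{(SRZ)} are assumed in the statement (with $F_n=\lspan\{e_1,\dots,e_n\}$), as is~\hyperlink{Fel}{(SF)}, so nothing must be proved for them. The approximate linearised controllability~\hyperlink{ALC}{(ALC)} is especially simple here because the force enters additively: $D_\eta\tilde S(u,\eta)=\mathrm{Id}_{\mathscr H}$, whose image is all of ${\mathscr H}$ and hence contains any finite-dimensional subspace $G$. It then remains only to exhibit a finite-dimensional \emph{determining} subspace $G$ in the sense of~\eqref{determining}; by Remark~\ref{r-determining}, the smoothing property of $S$ (continuous differentiability from ${\mathscr H}$ into the compactly embedded $V=H^2\cap H_0^1$) guarantees that $G=\lspan\{e_1,\dots,e_N\}$ is determining once $N$ is large enough. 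Thus~\hyperlink{ALC}{(ALC)} holds with this $G$.

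The one genuinely analytic point, and the step I expect to carry the weight, is the global dissipation~\hyperlink{GD}{(GD)}, i.e.\ the uniform exponential attraction to the origin of the unforced flow in the ${\mathscr H}$-norm. Starting from~\eqref{lyapunov-cgl}, I would bound the first term by the spectral gap, $\|\Delta u\|^2\ge\alpha_1\|\nabla u\|^2$, and the second by a weighted Poincar\'e inequality: writing $w=|u|^{s}u\in H_0^1$, one has the pointwise bound $|\nabla w|^2\le(s+1)^2|u|^{2s}|\nabla u|^2$, so $(|u|^{2s},|\nabla u|^2)\ge(s+1)^{-2}\|\nabla w\|^2\ge\alpha_1(s+1)^{-2}\|w\|^2=\alpha_1(s+1)^{-2}\|u\|_{L^{2s+2}}^{2s+2}$. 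Matching these two bounds against the two terms of $\HH$ yields $\tfrac{\dd}{\dd t}\HH(u(t))\le-\kappa\,\HH(u(t))$ with $\kappa=2\nu\alpha_1/(s+1)$, whence $\HH(u(t))\le e^{-\kappa t}\HH(u_0)$ and $\|u(t)\|_{\mathscr H}^2\le2\HH(u(t))\le2e^{-\kappa t}\HH(u_0)$. On the bounded set $X$ this gives $\|S^k(u_0)\|_{\mathscr H}\to0$ uniformly, which is exactly what~\hyperlink{GD}{(GD)} supplies in the proof of Theorem~\ref{t-mixing-dissipation} (the fact that the upper bound for $\HH(u_0)$ involves the higher-order Sobolev term $\|u_0\|_{\mathscr H}^{2s+2}$ is harmless after the reduction to $X$). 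With all hypotheses of Theorem~\ref{t-mixing-dissipation} verified, that theorem yields~\eqref{EC-NS} for initial data in $X$, and the passage to an arbitrary ball $B_{\mathscr H}(R)$ follows from the finite-time absorption of $B_{\mathscr H}(R)$ into $X$, exactly as at the end of the proof of Theorem~\ref{t-NS}.
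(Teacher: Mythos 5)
Your proposal is correct and follows essentially the same route as the paper's proof: reduce to Theorem~\ref{t-mixing-dissipation}, get \hyperlink{ALC}{(ALC)} from $D_\eta S=\Id{{\mathscr H}}$, and get \hyperlink{GD}{(GD)} from the Lyapunov inequality~\eqref{lyapunov-cgl} together with two-sided power bounds on $\HH$ in terms of the $H^1$-norm. You simply supply more detail than the paper does (the explicit weighted Poincar\'e estimate giving $\frac{\dd}{\dd t}\HH\le-\kappa\HH$, the construction of the compact invariant set~$X$, and the determining subspace via Remark~\ref{r-determining}), all of which is consistent with what the paper leaves implicit.
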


\begin{proof}
	We apply Theorem~\ref{t-mixing-dissipation}. The validity of Hypotheses~\hyperlink{DLP'}{(DLP$'$)} and~\hyperlink{SRZ}{(SRZ)} are postulated in the statement of the theorem, and~\hyperlink{GD}{(GD)} follows immediately from~\eqref{lyapunov-cgl} and the fact that~$\HH(u)$ can be estimated from above and from below by a power of the  $H^1$-norm. To check~\hyperlink{ALC}{(ALC)}, note that the derivative of the operator $S(u,\eta)=S(u)+\eta$ with respect to~$\eta\in {\mathscr H}$ is the identity operator in~${\mathscr H}$, so that its image is equal to~${\mathscr H}$. This completes the proof of the theorem.
\end{proof}

\section{Appendix: the image of transition probabilities under Lipschitz maps}
\label{s-transformation}
Let~$E$ be a separable Banach space represented as the direct sum of its closed subspaces~$F$ and~$F^\dagger$ such that $\dim F<\infty$, and let~$\KK\subset E$ be a compact subset. We consider a map $\varPsi:\KK\to E$ of the form $\varPsi(\eta)=\eta+\varPhi(\eta)$, where $\varPhi:\KK\to E$ is a continuous map whose image is included in~$F$, and for any $\eta,\eta'\in\KK$,  we have
\begin{align}
	\|\varPhi(\eta)\|_E\le\varkappa, \quad 
	\|\varPhi(\eta)-\varPhi(\eta')\|_E\le\varkappa\,\|\eta-\eta'\|_E,\label{lipschitz-property}
\end{align}
where $\varkappa>0$ is a number not depending on~$\eta$ and~$\eta'$. Let $\PPP(U;\cdot)$ be a transition probability with an underlying compact metric space~$\XXXX$. 

\begin{proposition}\label{p-image-measure}
	Suppose that $\PPP(U;\cdot)$ satisfies Hypothesis~\hyperlink{(DLP)}{\rm(DLP)}. Then there is~$C>0$ not depending on~$U$ and~$\varkappa$ such that 
	\begin{equation}\label{estimate-image}
		\bigl\|\PPP(U;\cdot)-\varPsi_*\bigl(\PPP(U;\cdot)\bigr)\bigr\|_{\mathrm{var}}\le C\varkappa\quad\mbox{for any $U\in\XXXX$}. 
	\end{equation}
\end{proposition}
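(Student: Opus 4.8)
The plan is to exploit that $\varPsi$ acts only on the $F$-coordinate. Writing $\eta=(\xi_F,\xi_{F^\dagger})$ according to $E=F\dotplus F^\dagger$ and recalling $\varPhi(\eta)\in F$, we have $\varPsi(\eta)=(\xi_F+\varPhi(\eta),\xi_{F^\dagger})$, so $\varPsi$ leaves the $F^\dagger$-component unchanged. Consequently $\varPsi_*\PPP(U;\cdot)$ has the \emph{same} projection $\PPP_{F^\dagger}(U;\cdot)=:m$ onto $F^\dagger$ as $\PPP(U;\cdot)$. This lets me disintegrate both measures over $F^\dagger$ against the common marginal $m$; since $\PPP(U;\cdot)-\varPsi_*\PPP(U;\cdot)=\int_{F^\dagger}\bigl(\PPP_F(U,y;\cdot)-(\varPsi_y)_*\PPP_F(U,y;\cdot)\bigr)\,m(\dd y)$ as signed measures, the triangle inequality for the total variation norm gives
\[
\bigl\|\PPP(U;\cdot)-\varPsi_*\PPP(U;\cdot)\bigr\|_{\mathrm{var}}\le\int_{F^\dagger}\bigl\|\PPP_F(U,y;\cdot)-(\varPsi_y)_*\PPP_F(U,y;\cdot)\bigr\|_{\mathrm{var}}\,m(\dd y),
\]
where $\varPsi_y:F\to F$ is the slice map $\xi_F\mapsto\xi_F+\varPhi(\xi_F,y)$. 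Everything is thus reduced to a finite-dimensional problem on $F$: comparing a measure with Lipschitz density to its image under a small Lipschitz perturbation of the identity.

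Next I would treat the fibrewise term. Here $\PPP_F(U,y;\dd\xi_F)=p_F(U,y,\xi_F)\,\ell_F(\dd\xi_F)$ with $p_F$ being $M$-Lipschitz by~\hyperlink{DLP}{(DLP)}, and $\varPsi_y=\mathrm{id}+\psi_y$ with $\psi_y(\cdot)=\varPhi(\cdot,y)\in F$ satisfying $\|\psi_y\|_E\le\varkappa$ and $\Lip(\psi_y)\le\varkappa$ by~\eqref{lipschitz-property}. Since the total variation of two probability measures never exceeds~$1$, I may assume $\varkappa$ as small as I like (depending only on $n=\dim F$), because for $\varkappa$ bounded below~\eqref{estimate-image} is trivial with $C$ large. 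Using Kirszbraun's theorem I extend $\psi_y$ to all of $F$ keeping $\Lip(\psi_y)\le\varkappa$, then compose with the nearest-point retraction onto $\overline{B_F(\varkappa)}$ to restore $\|\psi_y\|\le\varkappa$ without altering $\varPsi_y$ on $\supp\PPP_F(U,y;\cdot)\subset\mathsf P_F\KK$; this does not change the pushforward. Then $\varPsi_y$ is a bi-Lipschitz homeomorphism of $F$, differentiable $\ell_F$-a.e.\ by Rademacher's theorem, with $D\varPsi_y=I+D\psi_y$ and $\|D\psi_y\|\le\varkappa$, whence $|\det D\varPsi_y-1|\le C_n\varkappa$. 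The change-of-variables formula then produces the density $\tilde p(z)=p_F\bigl(U,y,\varPsi_y^{-1}(z)\bigr)\,|\det D\varPsi_y^{-1}(z)|$ of $(\varPsi_y)_*\PPP_F(U,y;\cdot)$.

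Finally I would estimate $\int_F|\tilde p(z)-p_F(U,y,z)|\,\ell_F(\dd z)$ by splitting it into a Lipschitz term and a Jacobian term. Writing $x=\varPsi_y^{-1}(z)$, one has $\|z-x\|\le\varkappa$, so $|p_F(U,y,z)-p_F(U,y,x)|\le M\varkappa$; as the integrand vanishes off a fixed bounded neighbourhood of the compact set $\mathsf P_F\KK$ (of volume $V$, uniform in $U,y$), this contributes $\le MV\varkappa$. The Jacobian term $\int_F p_F\bigl(U,y,\varPsi_y^{-1}(z)\bigr)\,\bigl||\det D\varPsi_y^{-1}(z)|-1\bigr|\,\ell_F(\dd z)$ is bounded by $C_n\varkappa\int_F p_F\bigl(U,y,\varPsi_y^{-1}(z)\bigr)\,\ell_F(\dd z)=C_n\varkappa\int_F p_F(U,y,x)\,|\det D\varPsi_y(x)|\,\ell_F(\dd x)\le 2C_n\varkappa$, using that $p_F(U,y,\cdot)$ integrates to~$1$. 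Hence the fibrewise term is $\le C\varkappa$ with $C$ depending only on $M$, $V$, and $n$, all independent of $U$ and of the fibre $y$; integrating against the probability measure $m=\PPP_{F^\dagger}(U;\cdot)$ yields~\eqref{estimate-image}. The hard part is precisely this finite-dimensional estimate under the mere Lipschitz (not $C^1$) regularity of $\varPhi$: one must invoke Rademacher's theorem to define the Jacobian a.e.\ and then control the two error sources \emph{uniformly} in the fibre, the uniformity resting on the compactness of $\KK$ (bounded support) and the joint Lipschitz bound on $p_F$.
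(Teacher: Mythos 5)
Your proof is correct and follows essentially the same route as the paper's: disintegration over $F^\dagger$ against the common marginal, a Kirszbraun extension of~$\varPhi$, the a.e.\ change of variables for bi-Lipschitz maps via Rademacher's theorem, and a split of the resulting $L^1$-error into a Lipschitz term (controlled by~\eqref{Lip} and the compact support coming from~$\KK$) and a Jacobian term of size $O(\varkappa)$. The only cosmetic differences are that you phrase the reduction via signed measures rather than test functions and add an explicit retraction to preserve the sup bound after extension.
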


\begin{proof}
	This result is essentially a simpler version of Theorem~2.4 in~\cite{KNS-gafa2020}, and we confine ourselves to outlining the main steps. 
	
	Since the total variation distance is bounded by~$1$, there is no loss of generality in assuming that $\varkappa\le\frac{1}{2}$. We need to prove that if~$f$ is the indicator function of a Borel set in~$E$, then the expression $|\langle f\circ\varPsi,\PPP(U;\cdot)\rangle-\langle f,\PPP(U;\cdot)\rangle|$ is bounded above by the right-hand side of~\eqref{estimate-image} with a number~$C$ not depending on~$U$ and~$f$. 
	
	To this end, we use~\eqref{decomposition-E-E} to write
\begin{align*}
	\langle f\circ\varPsi,\PPP(U;\cdot)\rangle &=
	\int_{F^\dagger}\PPP_{F^\dagger}(U,\dd \eta)\int_{F}f\bigl(\xi+\eta+\varPhi(\xi+\eta)\bigr)p_F(U,\eta;\xi)\,\ell_F(\dd\xi),\\
	\langle f,\PPP(U;\cdot)\rangle &=
	\int_{F^\dagger}\PPP_{F^\dagger}(U,\dd \eta)\int_{F}f(\xi+\eta)p_F(U,\eta;\xi)\,\ell_F(\dd\xi).
\end{align*}
Note that the integrations are carried out over the projections of~$\KK$ to~$F^\dagger$ and~$F$ (which are compact subsets), since the supports of the measures $\PPP(U;\cdot)$ are contained in~$\KK$. The required estimate will be established if we prove that 
\begin{equation}\label{int<kappa}
	\biggl|\int_{F}f(\xi+\eta+\varPhi(\xi+\eta))p_F(U,\eta;\xi)\,\ell_F(\dd\xi)-\int_{F}f(\xi+\eta)p_F(U,\eta;\xi)\,\ell_F(\dd\xi)\biggr|\le C\varkappa. 
\end{equation}
To this end, we wish to make the change of variable $\xi'=\xi+\varPhi(\xi+\eta)$ in the first integral. However, the map $\varPhi$ is defined only on~$\KK$. To overcome this difficulty, we first use the Kirzsbraun--McShane theorem (see~\cite[Theorem~6.1.1]{dudley2002}) to extend~$\varPhi$ to the whole space~$E$ in such a way that the extended map (for which we keep the same notation) is also $\varkappa$-Lipschitz. In this case, for any $\eta\in F^\dagger$, the map $\xi\mapsto \xi+\varPhi(\xi+\eta)$ is a bi-Lipschitz homeomorphism of the space~$F$ onto itself, and we denote by $\Theta_\eta(\cdot)$ its inverse. The latter and its differential (which exists almost everywhere in view of the Rademacher theorem; see~\cite[Section~3.1.6]{federer1969})  can be written in the form 
\begin{equation}\label{inverse-diff}
\Theta_\eta(\xi')=\xi'-\varPhi(\Theta_\eta(\xi')+\eta), \quad D\Theta_\eta(\xi')=\bigl(\Id{F}+D\varPhi(\Theta_\eta(\xi')+\eta)\bigr)^{-1}.
\end{equation}
Using Theorem~3.2.5 in~\cite{federer1969} to perform the change of variable $\xi=\Theta_\eta(\xi')$, we can write the first integral in~\eqref{int<kappa} as 
$$
\int_{F}f(\xi+\eta+\varPhi(\xi+\eta))p_F(U,\eta;\xi)
=\int_{F}f(\xi'+\eta)\frac{p_F(U,\eta;\Theta_\eta(\xi'))}{\det\bigl(\Id{F}+D\varPhi(\Theta_\eta(\xi')+\eta)\bigr)}, 
$$ 
where we dropped the Lebesgue measure $\ell_F$ to shorten the formula. Thus, the left-hand side~$\delta(U)$ of~\eqref{int<kappa} satisfies the inequality
\begin{equation}\label{delta-U}
	\delta(U)
\le \int_{F}\biggl|\frac{p_F(U,\eta;\Theta_\eta(\xi'))}{\det\bigl(\Id{F}+D\varPhi(\Theta_\eta(\xi')+\eta)\bigr)}-p_F(U,\eta;\xi)\biggr|\,\ell_F(\dd\xi),
\end{equation} 
where we used the inequality $0\le f\le1$. It follows from~\eqref{inverse-diff} and~\eqref{lipschitz-property} that~$\Theta_\eta$ is a $2$-Lipschitz function such that $|\Theta_\eta(\xi')-\xi'|\le\varkappa$. Combining this with the Lipschitz property of~$p_F$ (see~\eqref{Lip}), we can easily show that 
\begin{gather*}
	p_F(U,\eta;\Theta_\eta(\xi))
	=p_F(U,\eta;\xi)+q(U,\xi,\eta),\\
	\det\bigl(\Id{F}+D\varPhi(\Theta_\eta(\xi)+\eta)\bigr)^{-1}=1+r(\xi,\eta),
\end{gather*}
where $q$ and $r$ are some functions satisfying the inequality
$$
|q(U,\xi,\eta)|+|r(\xi,\eta)|\le C_1\varkappa
$$
with a constant $C_1>0$ not depending on $U$, $\xi$, $\eta$, and~$\varkappa$. Combining this with~\eqref{delta-U}, we arrive at the required inequality~\eqref{int<kappa}. 
\end{proof}

\addcontentsline{toc}{section}{Bibliography}
\newcommand{\etalchar}[1]{$^{#1}$}
\def\cprime{$'$} \def\cprime{$'$}
  \def\polhk#1{\setbox0=\hbox{#1}{\ooalign{\hidewidth
  \lower1.5ex\hbox{`}\hidewidth\crcr\unhbox0}}}
  \def\polhk#1{\setbox0=\hbox{#1}{\ooalign{\hidewidth
  \lower1.5ex\hbox{`}\hidewidth\crcr\unhbox0}}}
  \def\polhk#1{\setbox0=\hbox{#1}{\ooalign{\hidewidth
  \lower1.5ex\hbox{`}\hidewidth\crcr\unhbox0}}} \def\cprime{$'$}
  \def\polhk#1{\setbox0=\hbox{#1}{\ooalign{\hidewidth
  \lower1.5ex\hbox{`}\hidewidth\crcr\unhbox0}}} \def\cprime{$'$}
  \def\cprime{$'$} \def\cprime{$'$} \def\cprime{$'$}
\providecommand{\bysame}{\leavevmode\hbox to3em{\hrulefill}\thinspace}
\providecommand{\MR}{\relax\ifhmode\unskip\space\fi MR }
\providecommand{\MRhref}[2]{%
  \href{http://www.ams.org/mathscinet-getitem?mr=#1}{#2}
}
\providecommand{\href}[2]{#2}

\end{document}